\renewcommand{\epsilon}{\varepsilon}
\DeclareMathOperator{\spa}{span}
\DeclareMathOperator{\arc}{arc}
\def\R{\mathbb{R}}
\def\E{\mathbb{E}}
\def\N{\mathbb{N}}
\def\Z{\mathbb{Z}}
\def\a{\alpha}
\def\e{\epsilon}
\def\r{\rho}
\def\o{\omega}
\def\l{\lambda}
\def\k{\kappa}
\def\wt{\widetilde}
\def\la{\langle}
\def\ra{\rangle}
\newtheorem{theorem}{Theorem}[section]
\newtheorem{lemma}[theorem]{Lemma}
\newtheorem{thm}[theorem]{Theorem}
\newtheorem{remark}[theorem]{Remark}
\newtheorem{corollary}[theorem]{Corollary}
\newtheorem{claim}{Claim}[theorem]
\newtheoremstyle{TheoremNum}
        {\topsep}{\topsep}              
        {\itshape}                      
        {}                              
        {\bfseries}                     
        {.}                             
        { }                             
        {\thmname{#1}\thmnote{ \bfseries #3}}
    \theoremstyle{TheoremNum}
\title[Collapsing ancient solutions of MCF]{Collapsing ancient solutions of mean curvature flow}
\author{Theodora Bourni}
\author{Mat Langford}
\thanks{The second author was partially supported by an Alexander von Humboldt fellowship.}
\author{Giuseppe Tinaglia}
\thanks{The third author was partially supported by EPSRC grant no. EP/M024512/1}
\address{Department of Mathematics, University of Tennessee Knoxville, Knoxville TN, 37996-1320}
\email{tbourni@utk.edu}
\email{mlangford@utk.edu}
\address{Department of Mathematics, King's College London, London, WC2R 2LS, U.K.}
\email{giuseppe.tinaglia@kcl.ac.uk}
\begin{document}

\begin{abstract}
We construct a compact, convex ancient solution of mean curvature flow in $\R^{n+1}$ with $O(1)\times O(n)$ symmetry that lies in a slab of width $\pi$. We provide detailed asymptotics for this solution and show that, up to rigid motions, it is the only compact, convex, $O(n)$-invariant ancient solution that lies in a slab of width $\pi$ and in no smaller slab. 

\end{abstract}

\maketitle

\tableofcontents

\section{Introduction}

A smooth one parameter family $\{\Sigma^n_t\}_{t\in I}$ of smooth hypersurfaces $\Sigma_t^n$ of $\R^{n+1}$ is a solution of mean curvature flow if
\[
\frac{\partial F}{\partial t}(x,t)=\vec H(x,t)\quad\text{for all}\quad (x,t)\in \Sigma^n\times I
\]
for some smooth one parameter family $F:\Sigma^n\times I\to\R^{n+1}$ of smooth immersions $F(\cdot,t):\Sigma^n\to \R^{n+1}$ with $\Sigma^n_t=F(\Sigma^n,t)$, where $\vec H(\cdot,t)$ is the mean curvature vector of $F(\cdot,t)$. We say that a solution $\{\Sigma^n_t\}_{t\in I}$ of mean curvature flow is compact/convex/etc if this is true for each time slice $\Sigma^n_t$. 
We shall be interested in solutions which are defined on time intervals of the form $I=(-\infty,T)$, where $T\leq \infty$. Such solutions are referred to as \emph{ancient} because they have existed for an infinite time in the past. Furthermore, we will mostly be interested in compact solutions so that, without loss of generality, $T=0$ is the maximal time of existence. The most prominent example of a compact ancient solution of mean curvature flow is the shrinking sphere $S^n_{\sqrt{-2nt}}$. 

A great deal of interest in ancient solutions has arisen through their natural role in the study of singularities of mean curvature flow.
%
%
%
Moreover, they tend to exhibit rigidity phenomena analogous to those of complete minimal surfaces; for example, when $n\geq 2$, under certain geometric conditions ---  uniform convexity, bounded eccentricity, type-I curvature decay or bounded isoperimetric ratio, for instance --- the only compact, convex ancient solutions are shrinking spheres \cite{HuSi15} (see also \cite{DHS,HaHe,L}). When the ambient space is the sphere, the result is even nicer: In that case, the only geodesically convex ancient solutions are shrinking hemispheres \cite{BrLo,HuSi15}. The shrinking spheres are not the only such solutions, however --- there exists a family of compact, convex ancient solutions which contract to round points as $t$ goes to zero but become more eccentric as $t$ goes to minus infinity, resembling a shrinking cylinder $\R^k\times S^{n-k}_{\sqrt{-2(n-k)t}}$ in the `parabolic' region and a convex translating solution in the `tip' region \cite{Ang12,HaHe,Wh03}. We note that these examples are non-collapsing (in the sense of \cite[Section 3]{ShWa09} and \cite{An12}); that is, each point is tangent to an enclosed ball of radius comparable (uniformly in time) to one over the mean curvature at that point.

For curves evolving in the plane, there is a compact, convex ancient solution of curve shortening flow, known as the \emph{Angenent oval} or the \emph{paperclip}, which lies in a strip region of width $\pi$ \cite{Ang92}. In particular, this solution is collapsing. As $t$ goes to minus infinity, the Angenent oval tends to the boundary of the strip, whereas, after translating one of its two points of maximal displacement to the origin, it resembles the translating Grim Reaper solution. Modulo rigid motions, time translations and parabolic dilations, the shrinking circle and the Angenent oval are the only compact, embedded, convex ancient solutions of curve shortening flow \cite{DHS}. In particular, the shrinking circles are the only convex ancient solutions which are non-collapsing. In higher dimensions, the classification of convex, compact ancient solutions remains a open problem, even for non-collapsing solutions. We refer the reader to \cite{ADS} for some recent progress in this direction. 

In higher dimensions, Xu-Jia~Wang has constructed compact, convex ancient solutions in $\R^{n+1}$ which lie in slab regions by taking a limit of solutions of the Dirichlet problem for the level set flow \cite{Wa11}. 

In this paper, we will provide a detailed construction of an $O(1)\times O(n)$-invariant solution of mean curvature flow, including a precise description of its asymptotics. Our methods are rather different from Wang's, however; indeed, we emphasize elementary geometric techniques throughout and make no use of the level set flow (our solution is instead the limit of a sequence of mean curvature flows obtained by evolving rotated time slices of the Angenent oval). 

\begin{thm}\label{thm:existence}
There exists a compact, convex, $O(1)\times O(n)$-invariant ancient solution $\{\Sigma^n_t\}_{t\in(-\infty,0)}$ of mean curvature flow in $\R^{n+1}$ which lies in the slab $\Omega:=\{x\in \R^{n+1}:\vert x_1\vert<\frac{\pi}{2}\}$ and has the following properties.
\begin{enumerate}
\item[(1)] $\{\lambda \Sigma_{\lambda^{-2}t}\}_{t\in(-\infty,0)}$ converges uniformly in the smooth topology to the shrinking sphere $S^n_{\sqrt{-2nt}}$ as $\lambda\to 0$,
\item[(2)] $\{\Sigma_{t+s}\}_{t\in(-\infty,-s)}$ converges locally uniformly in the smooth topology to the stationary solution $\partial\Omega$ as $s\to-\infty$ and
\item[(3)] for any unit vector $\varphi\in \{e_1\}^\perp$, $\{\Sigma_{t+s}-P(\varphi,s)\}_{t\in (-\infty,-s)}$ converges locally uniformly in the smooth topology as $s\to-\infty$ to the Grim hyperplane which translates with unit speed in the direction $\varphi$, where, given any $v\in S^n$, $P(v,t)$ denotes the unique point of $\Sigma^n_t$ with outward pointing unit normal $v$.
\end{enumerate}
Moreover, as $t\to -\infty$,
\begin{enumerate}
\item[(4)] $\displaystyle\min_{p\in \Sigma_t}\vert p\vert=\vert{P(e_1,t)}\vert \geq \tfrac{\pi}{2}-o\left(\tfrac{1}{(-t)^k}\right)$ for any $k>0$ and
\item[(5)] $\displaystyle\max_{p\in \Sigma_t}\vert p\vert= \vert{P(\varphi,t)}\vert=-t+(n-1)\log(-t)+C+o(1)$ for any unit vector $\varphi\in \{e_1\}^\perp$, where $C\in\R$ is some constant.
\end{enumerate}
\end{thm}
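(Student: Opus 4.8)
The plan is to construct $\{\Sigma_t\}$ as a subsequential limit of mean curvature flows with rotationally symmetric initial data drawn from the Angenent oval, and then to read off the asymptotics (1)--(5) from barrier comparisons together with the graphical equation satisfied by the rotational profile. Concretely, let $\{\gamma_\tau\}_{\tau\in(-\infty,0)}$ be the Angenent oval in $\R^2$, normalized to become extinct at the origin at time $0$ and symmetric in both coordinate axes, so that $\gamma_\tau\subset\{|x_1|<\tfrac\pi2\}$ and its upper half is a concave graph $x_2=u_\tau(x_1)$. For each $s>0$ let $M^{-s}\subset\R^{n+1}$ be obtained by rotating $\gamma_{-s}$ about the $x_1$-axis; since $u_{-s}$ is concave, $M^{-s}$ is a smooth, compact, \emph{convex}, $O(1)\times O(n)$-invariant hypersurface contained in $\Omega$. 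Let $\{\Sigma^s_t\}_{t\in[-s,T^s)}$ be the mean curvature flow with $\Sigma^s_{-s}=M^{-s}$; by Huisken's theorem it stays convex and $O(1)\times O(n)$-invariant and shrinks to a round point at $T^s$. Two comparisons confine this family: the stationary hyperplanes $\partial\Omega$ give, by the avoidance principle, $\Sigma^s_t\subset\Omega$; and the rotated ovals $\{M^\tau\}_{\tau<0}$ form a \emph{supersolution} of mean curvature flow --- their normal speed is the curvature $\kappa_1$ of the profile curve, strictly less than their mean curvature $\kappa_1+(n-1)\kappa_2$ --- so $\Sigma^s_t\subset M^t$ for $t\in[-s,0)$, whence $T^s\le 0$; and, since near its $x'$-tip a rotated oval resembles a Grim hyperplane with speed $\approx 1$, the tip of $\Sigma^s_t$ recedes from distance $\approx s$ to the origin in time $\approx s$, so $T^s\to 0$.

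The crucial point is uniform non-degeneracy. Writing $a^s(t)$ for the $x_1$-extent of $\Sigma^s_t$, attained at the pole on the $x_1$-axis where $\tfrac{d}{dt}a^s(t)=-H$, one compares $\Sigma^s_t$ near this pole with large balls inscribed tangent to $\partial\Omega$ and near $\partial\Omega$ with translating Grim hyperplanes to bound $H$ at the pole by $C(\tfrac\pi2-a^s(t))$; hence $\tfrac\pi2-a^s(t)$ grows at most exponentially, and since $\tfrac\pi2-a^s(-s)=O(e^{-s})$ one gets $a^s(t)\ge\tfrac\pi2-Ce^{t}$ uniformly in $s$. Together with convexity, confinement to $\Omega$, and interior estimates, this gives $s$-independent control of $\{\Sigma^s_t\}$ on compact subsets of $(-\infty,0)$ that bounds the solution away from a point, so a subsequence converges to a compact, convex (strictly, by the strong maximum principle), $O(1)\times O(n)$-invariant ancient solution $\{\Sigma_t\}_{t\in(-\infty,0)}$ in $\Omega$; its $x_1$-width tends to $\pi$, so it lies in no smaller slab and is in particular not the shrinking sphere. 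It inherits $\tfrac\pi2-a(t)=o((-t)^{-k})$, and since the nearest point of $\Sigma_t$ to the origin is this pole, this is (4).

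For the remaining asymptotics: being compact and convex, $\{\Sigma_t\}$ shrinks to a single point, which by symmetry is the origin, so Huisken's roundness theorem gives that the parabolic rescalings about the extinction time converge to $S^n_{\sqrt{-2nt}}$, which is (1). For (2), $\Sigma_t$ is caught, from outside, inside the rotated ovals $M^t$, which converge locally to $\partial\Omega$, and from inside it contains the convex hull of the ball $B(0,a(t))$ (with $a(t)\to\tfrac\pi2$) and the tips $(0,\pm u(0,t)\varphi)$ (with $u(0,t)\to\infty$), which also fills $\Omega$ locally; hence $\Sigma_t\to\partial\Omega$ in $C^0_{\mathrm{loc}}$, and this is upgraded to $C^\infty_{\mathrm{loc}}$ by the interior estimates for graphical mean curvature flow. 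For (3), translating so that $P(\varphi,s)$ lies at the origin and letting $s\to-\infty$, the $s$-independent estimates produce a limiting flow which is a complete, convex, ancient solution in a slab of width $\pi$ whose profile is, by the analysis below, asymptotically the Grim Reaper; identifying it with the Grim hyperplane translating with unit speed in the direction $\varphi$ completes (3).

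The sharp asymptotics (5) are the main obstacle. The profile $u(x_1,t)$ of $\Sigma_t$, defined on $\{|x_1|<a(t)\}$ with $u\to0$ at $\pm a(t)$ and $u_{x_1}(0,t)=0$, satisfies
\[
u_t=\frac{u_{x_1x_1}}{1+u_{x_1}^2}-\frac{n-1}{u},
\]
so at $x_1=0$ one has $u_t(0,t)=u_{x_1x_1}(0,t)-\tfrac{n-1}{u(0,t)}$, where $|P(\varphi,t)|=u(0,t)$. The Grim hyperplane asymptotics of (3) force the tip curvature $-u_{x_1x_1}(0,t)$ to tend to $1$ with a controlled rate; feeding $u(0,t)\sim-t$ into the equation and integrating gives the leading behaviour $u(0,t)=-t+(n-1)\log(-t)+C+o(1)$, the logarithmic term being exactly the cumulative effect of the rotational term $-\tfrac{n-1}{u}$. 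Turning this heuristic into the sharp two-sided bound of (5) --- constructing a subsolution and a supersolution of the profile equation with matching leading order $-t+(n-1)\log(-t)$, controlling the error uniformly in the tip region (by comparison with a Grim hyperplane, using the known expansion of the Angenent oval) and in the central region, and pinning down the constant $C$ --- is the main technical work, and the same analysis sharpens (4) to exponential-type decay.
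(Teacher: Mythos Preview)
Your construction and overall outline match the paper's, but there are two genuine gaps.

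The most serious is in (3): you assert that the translated tip limit is the Grim hyperplane of width exactly $\pi$ (unit speed), but compactness and Hamilton's Harnack inequality only give you \emph{some} Grim hyperplane of width $\alpha\pi$ with $\alpha\in(0,1]$, and ruling out $\alpha<1$ is the central difficulty of the whole argument. In the paper this is done via a two-sided estimate on the enclosed area $A(t)$ of the profile curve: from the evolution $-\tfrac{d}{dt}A=2\pi+(n-1)\int\lambda/\kappa\,d\theta$ together with $\lambda\le\min\{\kappa,\alpha/(-t)\}$ one obtains $A(t)\le(2\pi+\epsilon)\alpha\ell(t)$, while a direct geometric lower bound (using that the tip looks like the width-$\alpha\pi$ Grim Reaper but the body fills the full width-$\pi$ slab) gives $A(t)\ge(2\pi\alpha+(1-\alpha)\pi-\epsilon)\ell(t)$; these are incompatible unless $\alpha=1$. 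You do not engage with this point, and your ODE heuristic for (5) is circular without it, since the assertion $-u_{x_1x_1}(0,t)\to 1$ is precisely the statement $\alpha=1$.

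The second gap is your claimed pole estimate $H_{\min}(t)\le C(\tfrac\pi2-a^s(t))$. Neither inscribed spheres nor Grim barriers produce this; the natural sphere comparison (pass a circle through the pole and the tip) gives only $H_{\min}\le 2n h/(h^2+\ell^2)\le n\pi/(-t)^{2}$, which integrates to $h(t)\ge\tfrac\pi2 e^{2n/t}$ --- enough for the limit to lie in no smaller slab, but far from your claimed exponential decay. Note also that your Gronwall step would give $(\tfrac\pi2-a^s)(t)\lesssim e^{-s}e^{C(t+s)}$, which is uniform in $s$ only if $C\le 1$, a sharp constant you give no reason for. In the paper, the polynomial decay in (4) is obtained only \emph{after} proving $\alpha=1$, via an iterative bootstrap that feeds the $H_{\min}$ bound into a graph-height estimate $u(x,t)\ge -t-(\text{const}/(\tfrac\pi2-x))^{1/k}$ and back; the same machinery is then what produces the sharp asymptotics (5), with finiteness of the constant $C$ requiring a separate argument using the approximating flows.
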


In fact, we are able to say even more about the asymptotics of this solution, including asymptotics for the speed; see Lemma \ref{lem:uHhk}, Corollary \ref{cor:improvedHestimate} and Remark \ref{rem:betterasymptotics}.

We note that the convergence to a `round point' in item (1) is a consequence of Huisken's theorem \cite{Hu84} and well-known arguments show that the `parabolic' region converges to the boundary of the slab, as in item (2); see Lemma \ref{lem:asymptoticsmiddle}. With regards to item (3), well-known arguments also show that the `edge' region converges to a Grim hyperplane, at least along a subsequence of times (see Lemma \ref{lem:grimconvergence}); however, it is non-trivial to rule out limit Grim hyperplanes which are smaller than the one asymptotic to the boundary of the slab. This is the content of Corollary \ref{cor:asymptoticsedge}. The remaining asymptotics are derived in Sections \ref{sec:area estimates} and \ref{sec:uniqueness}. In fact, we actually show that \emph{any} compact, convex, $O(n)$-invariant ancient solution contained in the slab $\Omega$ (and no smaller slab) satisfies the asymptotics (1)-(5)\footnote{Although it must be noted that the approximating solutions used to construct the particular solution described in Theorem \ref{thm:existence} are used in a crucial way to obtain finiteness of the constant $C$ in statement (5). See Section \ref{sec:uniqueness}.}. By applying an Alexandrov reflection argument, we are then able to obtain the following uniqueness result.


\begin{thm}\label{thm:uniqueness}
Let $\{\Sigma_t\}_{t\in(-\infty,0)}$ be a compact, convex, $O(n)$-invariant ancient solution of mean curvature flow in $\R^{n+1}$ which lies in a slab $\Omega_{e,\alpha}:=\{x\in \R^{n+1}: \vert{x\cdot e}\vert<\alpha\}$ for some $e\in S^n$ and $\alpha>0$ and in no smaller slab. Then, after a rigid motion and a parabolic rescaling, $\{\Sigma_t\}_{t\in(-\infty,0)}$ coincides with the solution constructed in Theorem \ref{thm:existence}.
\end{thm}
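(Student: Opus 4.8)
The plan is to deduce Theorem \ref{thm:uniqueness} from the asymptotic analysis already developed in the paper, via an Alexandrov reflection argument. First I would normalize: after a rotation we may assume $e = e_1$, and after a parabolic rescaling we may assume $\alpha = \tfrac{\pi}{2}$, so that $\{\Sigma_t\}$ lies in the slab $\Omega$ and in no smaller slab. The $O(n)$-invariance means each $\Sigma_t$ is a hypersurface of revolution about the $x_1$-axis, so it is determined by its profile curve, and convexity plus the slab condition force the solution to be compact with maximal time $T=0$ (up to a time translation). The key input is that, as established in the discussion following Theorem \ref{thm:existence} (culminating in Corollary \ref{cor:asymptoticsedge} and the area estimates of Sections \ref{sec:area estimates} and \ref{sec:uniqueness}), \emph{any} such solution satisfies the asymptotics (1)--(5); in particular, by item (3), the edge region is asymptotic to the \emph{full-width} Grim hyperplane associated to the slab $\Omega$, and by (4)--(5) the tip positions $P(e_1,t)$ and $P(\varphi,t)$ have the precise asymptotic expansions stated. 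Thus the constructed solution $\{\Sigma^{\mathrm{W}}_t\}$ of Theorem \ref{thm:existence} and the given solution $\{\Sigma_t\}$ have \emph{the same asymptotics} as $t\to-\infty$.

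Next I would run Alexandrov reflection in the hyperplanes $\{x_1 = c\}$ orthogonal to the slab direction. Because both solutions are $O(n)$-invariant and convex, and $e_1$ is the slab axis, reflection across $\{x_1 = c\}$ interacts cleanly with the flow: for each fixed $c\in(-\tfrac\pi2,\tfrac\pi2)$, the reflected piece of $\Sigma_t$ on one side is a graph over the other side that shrinks under the flow, and one shows by the maximum principle (Alexandrov's moving plane method applied to the two flows simultaneously, or to a single flow and its reflection) that once a reflection inequality holds it is preserved in forward time. The standard way to package this: consider the two solutions $\{\Sigma_t\}$ and $\{\Sigma^{\mathrm{W}}_t\}$; after a time translation, since both are defined on $(-\infty,0)$ with maximal time $0$, compare them directly. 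Using the shared asymptotics from items (2)--(5), one gets that for $t$ sufficiently negative the reflection of $\Sigma_t^{\mathrm{W}}$ across any admissible hyperplane $\{x_1=c\}$ lies (weakly) inside $\Sigma_t$ on the appropriate side, or a containment $\Sigma_t\subseteq$ region bounded by $\Sigma^{\mathrm{W}}_{t}$ up to a controlled shift — this is where the precise $o(1)$ error terms in (5) and the super-polynomial decay in (4) are essential, as they pin the two solutions together at $-\infty$ tightly enough to start the reflection/comparison with no gap.

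I would then invoke the avoidance principle (disjoint compact mean curvature flows stay disjoint, with the strong maximum principle upgrading non-strict to strict unless the hypersurfaces coincide) together with the Alexandrov reflection monotonicity to conclude that the comparison is in fact an equality for all $c$ and all $t$: the solution $\{\Sigma_t\}$ is reflection-symmetric across $\{x_1 = 0\}$, and moreover it coincides with $\{\Sigma^{\mathrm{W}}_t\}$. Concretely, suppose after the normalization and a time translation that $\{\Sigma_t\}\ne\{\Sigma^{\mathrm W}_t\}$; by the strong maximum principle there is a first time of contact or a definite separation, but the $t\to-\infty$ asymptotics (same slab, same Grim-hyperplane edge asymptotics with the same constant $C$, same tip decay) force the separation to vanish as $t\to-\infty$, and avoidance then forbids it from becoming positive — contradiction. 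Hence the two flows agree. (The footnote after Theorem \ref{thm:existence} flags that finiteness of the constant $C$ in (5) for a general such solution is obtained using the approximating solutions; I would cite that and the results of Section \ref{sec:uniqueness} for this point rather than reprove it.)

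The main obstacle I expect is making the Alexandrov reflection argument work \emph{up to time $-\infty$} rather than just on a finite interval: one must show that the reflection comparison, once true for very negative times, is not merely preserved but yields coincidence, and this requires that the two solutions be genuinely asymptotic with no residual shift — i.e. that there is no one-parameter family of solutions differing by how far the tips have travelled. This is exactly why the sharp asymptotics (4) and (5), \emph{including} the finite constant $C$, are needed: they rule out such a shift. A secondary technical point is justifying that $O(n)$-invariance plus convexity plus the slab hypothesis really do force $\{\Sigma_t\}$ into the geometric picture (compact, profile-curve description, $T=0$, the two tips $P(\pm e_1,t)$ approaching $\partial\Omega$ and the equatorial tips $P(\varphi,t)$ racing off to infinity), so that Corollary \ref{cor:asymptoticsedge} and the Section \ref{sec:area estimates}--\ref{sec:uniqueness} estimates apply verbatim; but this is routine given the structure theory already set up. Once the comparison is pinned at $-\infty$, the conclusion is immediate from the maximum principle.
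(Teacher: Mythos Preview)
Your outline has the right ingredients --- normalize to the slab $\Omega$, invoke the asymptotics of Sections~\ref{sec:uniqueasymptotics}--\ref{sec:uniqueness}, and combine Alexandrov reflection with the avoidance principle --- but there is a real gap at the point where you conclude. You assert that the two solutions have ``the same constant $C$'' in the expansion $\ell(t)=-t+(n-1)\log(-t)+C+o(1)$, and that therefore ``the separation vanishes as $t\to-\infty$''. Neither of these follows from the asymptotic analysis: Lemma~\ref{lem:ellasymptotics} only shows that the limit $C$ exists (in $\R\cup\{\infty\}$) for \emph{each} solution separately, and the work at the start of Section~\ref{sec:uniqueness} establishes finiteness of $C$ only for the \emph{constructed} solution. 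A second solution could a priori carry a different constant $C'$ (possibly $C'=\infty$), in which case the tips $P(\varphi,t)$ and $P'(\varphi,t)$ remain a bounded (or unbounded) distance apart as $t\to-\infty$, and your ``separation $\to 0$, then avoidance'' argument never gets off the ground. Even when $C=C'$, the $o(1)$ terms need not agree, so you cannot initialize a direct avoidance comparison between $\Sigma_t$ and $\Sigma'_t$ at any finite time.

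The paper closes this gap with two moves that are absent from your proposal. First, it \emph{proves} $C=C'$ by contradiction: if, say, $\ell(t)<\ell'(t)$ for all $t<t_0$, then for each $\alpha\in(0,\tfrac{\pi}{2})$ the reflected piece $(R_\alpha\cdot\Sigma_t)\cap\Pi_0$ is disjoint from $\Sigma'_t\cap\Pi_0$ for $t$ very negative (the boundaries on $\{x_1=0\}$ are separated by the $\ell$--gap, the interiors by the Grim-hyperplane asymptotics of Corollary~\ref{cor:asymptoticsedge}, exactly as in Claim~\ref{claim:reflectionII}); the maximum principle propagates this to all $t<t_0$, and letting $\alpha\to 0$ yields $\Omega_t\subset\Omega'_t$. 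But both flows extinguish at $t=0$, so by avoidance they must intersect for every $t<0$, a contradiction. Second, once $C=C'$, the paper uses a \emph{time-translation squeeze}: for $\tau>0$ the shifted flow $\Sigma^\tau_t:=\Sigma_{t+\tau}$ has constant $C^\tau>C=C'$, so the first step now gives genuine containment $\Omega'_t\subset\Omega^\tau_t$; letting $\tau\to 0$ forces $\Sigma_t=\Sigma'_t$. These two devices --- exploiting the common extinction time to pin $C$, then squeezing via time translations to kill the $o(1)$ discrepancy --- are precisely what ``rules out the shift'' you correctly flagged as the main obstacle, and they are what your argument is missing.
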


It is worth noting that reflection symmetry is not assumed in Theorem \ref{thm:uniqueness}. Moreover, by a result of Wang, it even suffices to assume that the solution only lies in a halfspace rather than a slab \cite[Corollary 2.1]{Wa11}.

Finally, we remark that these arguments apply (and, indeed, are significantly simplified) in case $n=1$. So our methods also suggest a new approach to the classification of compact, convex ancient solutions of the curve shortening flow \cite{DHS}.




\section*{Acknowledgements}

We are indebted to Ben Andrews, Sigurd Angenent and Panagiota Daskalopoulos for many helpful suggestions on the problem and to Paul Bryan and Mohammad Ivaki for providing valuable feedback on an early draft of the paper.

\section{The Angenent oval}\label{Angenent ovals}

We begin by reviewing some geometric properties of convex, closed curves and the Angenent oval that will be important in our construction. In this section, and throughout the paper, we will regularly identify $S^1\cong \R/2\pi\Z$.

First recall that any positive $2\pi$-periodic function $\kappa\in C^0(\R)$ satisfying
\[
\int_0^{2\pi} \frac{\cos\theta}{\kappa(\theta)}d\theta=0\quad\text{and}\quad \int_0^{2\pi} \frac{\sin\theta}{\kappa(\theta)}d\theta=0
\] 
defines a simple, closed, convex $C^2$ planar curve $\gamma:S^1\to\R^2$ via the formula 
\begin{equation}\label{eq:curvedef}
\gamma(\theta):=\left(\int_0^{\theta} \frac{\cos u}{\kappa(u)}du\,,\int_{\frac{\pi}{2}}^{\theta} \frac{\sin u}{\kappa(u)}du\right)\,.
\end{equation}
Note that
\[
\gamma'(\theta)=\left(\frac{\cos\theta}{\kappa(\theta)},\frac{\sin\theta}{\kappa(\theta)}\right)\implies \tau(\theta):=\frac{\gamma'(\theta)}{|\gamma'(\theta)|}=(\cos\theta,\sin\theta)\,,
\]
so $\theta$ has the geometric interpretation as the \emph{turning angle} of $\gamma$; that is, the counter-clockwise angle between the $x$-axis and the tangent vector $\tau$. Moreover,
\[
s(\theta):=\int_0^\theta\frac{du}{\kappa(u)}
\]
defines an arc-length parameter so that
\[
\partial_s\tau=-\kappa\,\partial_\theta \tau=-\kappa\nu\,,
\]
where
\[
\nu:=(\sin\theta,-\cos\theta)
\]
is the outward-pointing unit normal. So $\kappa$ corresponds to the curvature of the curve. Conversely, up to a translation in $\R^2$, any simple, closed, convex, $C^2$ planar curve can be written in the form \eqref{eq:curvedef} by parametrizing with respect to turning angle.

The \emph{Angenent oval} is the smooth one-parameter family of curves\footnote{We will sometimes refer to a particular time slice $\gamma(\cdot,t)$ as an Angenent oval.} $\gamma(\cdot,t):[0,2\pi)\to\R^2$, $t\in(-\infty,0)$, defined by \eqref{eq:curvedef} with curvature given by the formula
\begin{equation}\label{k}
\k^2(\theta,t)=\frac{1}{\mathrm{e}^{-2t}-1}+\cos^2\theta\,.
\end{equation}
It is readily verified that \cite{DHS}
\[
\kappa_t=\kappa^2(\kappa_{\theta\theta}+\kappa)\,,
\]
where the subscripts denote corresponding partial derivatives. Using also the fact that $\kappa_\theta(0,t)\equiv 0$, we find
\[
\partial_t\gamma=-\kappa\nu-\kappa_\theta\tau\,.
\]
Thus, up to a tangential reparametrization, the Angenent oval is an ancient solution of the curve shortening flow.

Using \eqref{k}, we can compute the $x$ and $y$ coordinates of the Angenent oval explicitly: Setting $a^2(t):=\frac{1}{\mathrm{e}^{-2t}-1}$, we find
\begin{align*}
x(\theta,t)=\int_{0}^\theta\frac{\cos u}{\sqrt{\cos^2u +a^2(t)}}du
={}&\arctan\left(\frac{\sin\theta}{\sqrt{\cos^2\theta+a^2(t)}}\right)
\end{align*}
and
\begin{align*}
y(\theta, t)={}&\int_{\tfrac{\pi}{2}}^\theta\frac{\sin u}{\sqrt{\cos^2u+a^2(t)}}du\nonumber\\
={}&\log\left(\frac{a(t)}{\sqrt{\cos^2\theta+a^2(t)}+\cos\theta}\right)\nonumber\\
={}&-t+\log\left(\frac{\sqrt{\cos^2\theta+a^2(t)}-\cos\theta}{\sqrt{a^2(t)+1}}\right)\,.
\end{align*}
In particular, 
\begin{align}\label{eq:Angenentimplicit}
\cos x=\mathrm{e}^t\cosh y\,.
\end{align}
From \eqref{eq:Angenentimplicit}, it is easily seen that $\gamma(\cdot,t)$ is reflection symmetric with respect to both the $x$-axis and the $y$-axis. 

Note now that $\k^2(\cdot,t)$ attains its minimum value, $\frac{1}{1-\mathrm{e}^{2t}}-1$, at $\theta=\frac{\pi}{2}$ and its maximum value, $\frac{1}{1-\mathrm{e}^{2t}}$, at $\theta=\pi$ and is strictly increasing in the interval $(\tfrac{\pi}{2},\pi)$. In particular, the vertex set of $\gamma(\cdot,t)$ is $\{0,\tfrac{\pi}{2},\pi,\tfrac{3\pi}{2}\}$ for all $t<0$. Define the horizontal and vertical displacements
\[
h(t):=\max_{\theta \in S^1}x(\theta,t)= x\left(\tfrac\pi 2,t\right)\quad \text{and} \quad \ell(t):=\max_{\theta \in S^1}y(\theta,t)=y(\pi,t)\,.
\]

\begin{figure}
\begin{center}
\includegraphics[width=0.7\textwidth]{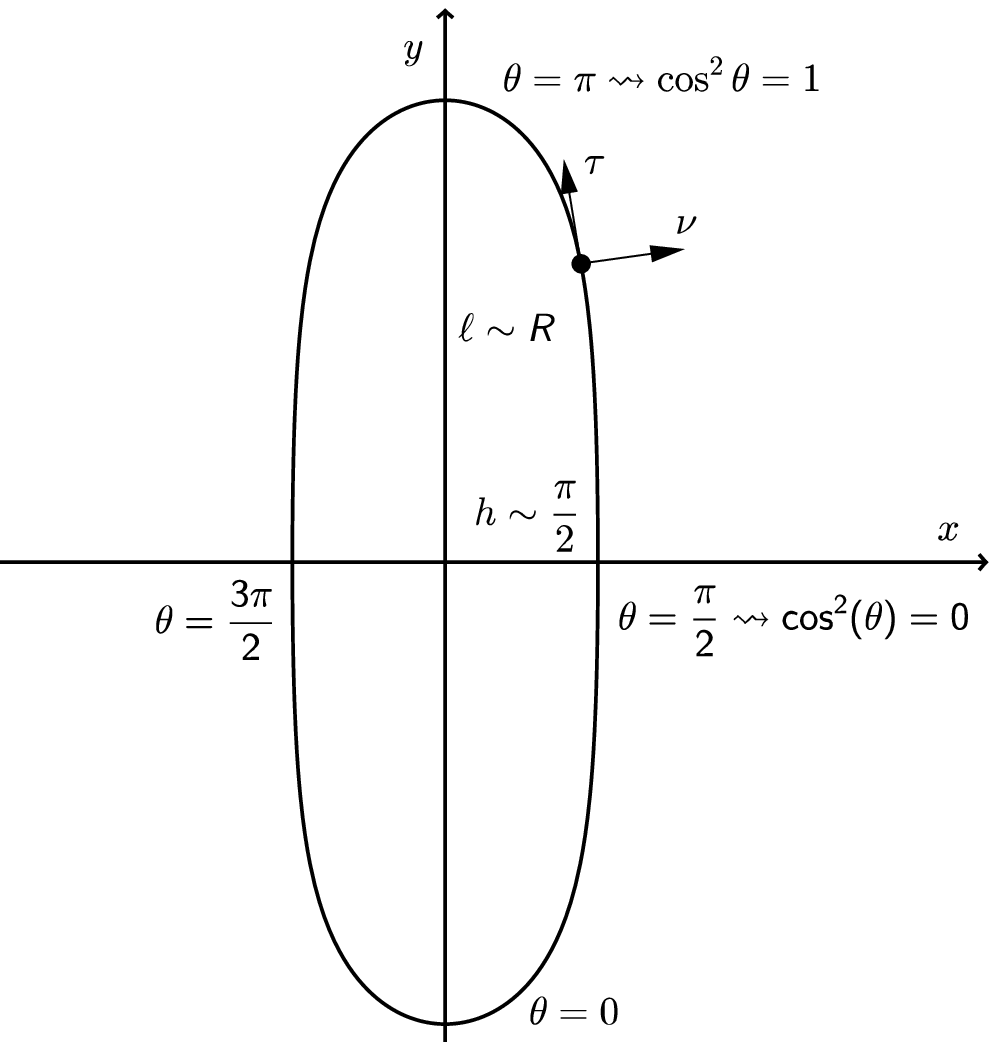}
\end{center}
\caption{The Angenent oval at time $t=-R$.}\label{fig1}
\end{figure}

\begin{lemma}\label{CR-lem}
For every $t<0$,
\[
\frac{\pi}{2}\left(1-\mathrm{e}^t\right)\le h(t)\le \frac{\pi}{2}\quad \text{and}\quad -t\le \ell(t)\le -t+\log 2.
\]
\end{lemma}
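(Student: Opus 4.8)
The plan is to estimate $h(t)$ and $\ell(t)$ directly from the explicit formulas for $x(\theta,t)$ and $y(\theta,t)$ derived above, specializing to $\theta=\tfrac{\pi}{2}$ and $\theta=\pi$ respectively. For the horizontal displacement, I would start from
\[
h(t)=x\!\left(\tfrac{\pi}{2},t\right)=\arctan\!\left(\frac{1}{a(t)}\right),
\]
since $\cos\tfrac{\pi}{2}=0$ and $\sin\tfrac{\pi}{2}=1$. The upper bound $h(t)\le\tfrac{\pi}{2}$ is then immediate from $\arctan\le\tfrac{\pi}{2}$. For the lower bound, I would use the identity $\arctan u=\tfrac{\pi}{2}-\arctan(1/u)$ for $u>0$ together with the elementary inequality $\arctan v\le v$; this gives $h(t)\ge\tfrac{\pi}{2}-a(t)$. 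It then remains to check that $a(t)=(\mathrm{e}^{-2t}-1)^{-1/2}\le\tfrac{\pi}{2}\mathrm{e}^t$, i.e. that $\mathrm{e}^{-2t}-1\ge \tfrac{4}{\pi^2}\mathrm{e}^{-2t}$; since $\tfrac{4}{\pi^2}<1$ this reduces to $(1-\tfrac{4}{\pi^2})\mathrm{e}^{-2t}\ge 1$, which holds for all $t<0$ (indeed $\mathrm{e}^{-2t}>1$ and $1-\tfrac{4}{\pi^2}>\tfrac{1}{2}$, so a crude bound suffices). Actually the cleanest route may be to instead bound $\arctan(1/a)\ge \tfrac{\pi}{2}(1-\mathrm{e}^t)$ by comparing with the chord: since $\arctan$ is concave on $[0,\infty)$, on the interval from $1/a=0$ (value $0$) to $1/a=\infty$ (value $\tfrac\pi2$) one can pin down the required inequality by a monotonicity/convexity comparison after the substitution $1/a=\sqrt{\mathrm e^{2t}/(1-\mathrm e^{2t})}$; I would choose whichever of these is shortest to write.

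For the vertical displacement, I would use
\[
\ell(t)=y(\pi,t)=-t+\log\!\left(\frac{\sqrt{1+a^2(t)}-1}{\sqrt{a^2(t)+1}}\right)=-t+\log\!\left(1-\frac{1}{\sqrt{1+a^2(t)}}\right),
\]
obtained by plugging $\cos\pi=-1$ into the third expression for $y$. Wait — I should double-check the sign: at $\theta=\pi$, $\sqrt{\cos^2\theta+a^2}-\cos\theta=\sqrt{1+a^2}+1$, so actually $\ell(t)=-t+\log\!\big((\sqrt{1+a^2(t)}+1)/\sqrt{1+a^2(t)}\big)=-t+\log\!\big(1+(1+a^2(t))^{-1/2}\big)$. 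The argument of the logarithm lies in $(1,2]$: it is $>1$ trivially, and it is $\le 2$ because $(1+a^2)^{-1/2}\le 1$. Hence $-t<\ell(t)\le -t+\log 2$, which is exactly the claimed two-sided bound (with the left inequality even strict, consistent with the stated non-strict one).

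The main obstacle, such as it is, is purely bookkeeping: keeping the three equivalent forms of $y(\theta,t)$ straight and selecting the one that makes the bounds on $\ell$ transparent, and choosing the slickest elementary inequality for $\arctan$ to get the lower bound on $h$. There is no conceptual difficulty — everything reduces to monotonicity of $\arctan$, $\log$, and the explicit behavior of $a(t)$ as $t\to 0^-$ and $t\to-\infty$ — so I expect the proof to be a few lines once the right representation is fixed. One should also note in passing that $a^2(t)=\tfrac{1}{\mathrm{e}^{-2t}-1}>0$ for all $t<0$, so all the square roots and logarithms above are well defined.
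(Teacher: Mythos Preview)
Your treatment of $\ell(t)$ is correct and clean: once you write $\ell(t)=-t+\log\bigl(1+(1+a^2(t))^{-1/2}\bigr)$, the two-sided bound follows immediately since $(1+a^2)^{-1/2}\in(0,1)$. This is essentially equivalent to the paper's argument via $\cosh\ell(t)=\mathrm{e}^{-t}$.

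The lower bound on $h(t)$, however, has a genuine error. From $h(t)\ge \tfrac{\pi}{2}-a(t)$ you want $a(t)\le \tfrac{\pi}{2}\mathrm{e}^{t}$, and you reduce this correctly to $(1-\tfrac{4}{\pi^2})\mathrm{e}^{-2t}\ge 1$. But this inequality is \emph{false} for $t$ close to $0$: it requires $\mathrm{e}^{-2t}\ge \pi^2/(\pi^2-4)\approx 1.68$, i.e.\ roughly $t\le -0.26$. Your parenthetical justification (``$\mathrm{e}^{-2t}>1$ and $1-\tfrac{4}{\pi^2}>\tfrac{1}{2}$'') only gives a product exceeding $\tfrac{1}{2}$, not $1$. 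The underlying issue is structural: as $t\to 0^-$ one has $a(t)\to\infty$, so the intermediate bound $h(t)\ge \tfrac{\pi}{2}-a(t)$ becomes vacuous there, while the target $\tfrac{\pi}{2}(1-\mathrm{e}^t)$ merely tends to $0$. Your fallback ``concavity of $\arctan$'' suggestion is too vague to fill the gap as written.

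A quick fix: from $h(t)=\arctan(1/a(t))$ one reads off $\cos h(t)=\mathrm{e}^{t}$ (since $\tan^2 h = 1/a^2 = \mathrm{e}^{-2t}-1$). The desired inequality is then $\arccos(s)\ge \tfrac{\pi}{2}(1-s)$ for $s=\mathrm{e}^t\in(0,1)$, equivalently $s\le \sin(\tfrac{\pi}{2}s)$, which holds on $[0,1]$ by concavity of $\sin$ on $[0,\tfrac{\pi}{2}]$ (the chord from $(0,0)$ to $(1,1)$ lies below the graph). This is exactly the route the paper takes.
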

\begin{proof}
We use \eqref{eq:Angenentimplicit} to compute
\begin{align*}
\cos h(t)=\mathrm{e}^t\quad\text{and}\quad\cosh \ell(t)=\mathrm{e}^{-t}\,.
\end{align*}
The claimed estimates follow: Clearly $h(t)<\frac{\pi}{2}$. Estimating $x\leq \sin\left( \tfrac{\pi}{2}x\right)$ for $x\in [0,1]$ yields the crude estimate
\begin{align*}
\cos h(t)=\mathrm{e}^t\leq \sin \left(\tfrac{\pi}{2}\mathrm{e}^t\right)=\cos\left(\tfrac{\pi}{2}-\tfrac{\pi}{2}\mathrm{e}^t\right)\,,
\end{align*}
which yields the lower bound for $h$. To estimate $\ell$ from above, we simply observe that
\begin{align*}
\mathrm{e}^{-t+\log 2}=2\mathrm{e}^{-t}=\mathrm{e}^{\ell(t)}+\mathrm{e}^{-\ell(t)}\geq \mathrm{e}^{\ell(t)}\,.
\end{align*}
To obtain the lower bound for $\ell$, we crudely estimate
\begin{align*}
\cosh\ell(t)=\mathrm{e}^{-t}\geq \cosh(-t)
\end{align*}
and recall that $\cosh x$ is increasing for $x\geq 0$.
\end{proof}

In fact, the Angenent oval lies inside the two vertically translating Grim Reapers which reach the origin at time $t=\log 2$

\begin{lemma}\label{lem:grimavoidance}
For every $t<0$, $\Gamma_t\cap G^{\pm}_t=\emptyset$, where
\[
G^\pm_t:=\{(x,\pm(-t+\log 2+\log\cos x)):x\in(-\tfrac{\pi}{2},\tfrac{\pi}{2})\}\,.
\]
\end{lemma}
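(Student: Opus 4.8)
The plan is to reduce the disjointness assertion to a one-line comparison using the implicit equation \eqref{eq:Angenentimplicit} for the Angenent oval. First I would rewrite the defining equation of $G^+_t$ in exponential form: since $x\in(-\tfrac\pi2,\tfrac\pi2)$ we have $\cos x>0$, and the condition $y=-t+\log 2+\log\cos x$ is equivalent to $\mathrm{e}^{y}=2\mathrm{e}^{-t}\cos x$, i.e. $\cos x=\tfrac12\mathrm{e}^{t}\mathrm{e}^{y}$. In the same way, membership in $G^-_t$ (where $-y=-t+\log 2+\log\cos x$) is equivalent to $\cos x=\tfrac12\mathrm{e}^{t}\mathrm{e}^{-y}$.

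Next, suppose toward a contradiction that some point $(x,y)$ lies in $\Gamma_t\cap G^+_t$. On the one hand \eqref{eq:Angenentimplicit} gives $\cos x=\mathrm{e}^{t}\cosh y$, and on the other hand membership in $G^+_t$ gives $\cos x=\tfrac12\mathrm{e}^{t}\mathrm{e}^{y}$. Equating these and cancelling the positive factor $\mathrm{e}^{t}$ yields $\cosh y=\tfrac12\mathrm{e}^{y}$, i.e. $\mathrm{e}^{-y}=0$, which is absurd; hence $\Gamma_t\cap G^+_t=\emptyset$. The argument for $G^-_t$ is identical after replacing $\mathrm{e}^{y}$ by $\mathrm{e}^{-y}$, and leads instead to $\mathrm{e}^{y}=0$. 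In fact the same computation gives the slightly stronger statement that $\Gamma_t$ lies in the open region $\{(x,y):\vert y\vert<-t+\log 2+\log\cos x\}$ trapped between $G^+_t$ and $G^-_t$: for $(x,y)\in\Gamma_t$ one has $\mathrm{e}^{\vert y\vert}<\mathrm{e}^{\vert y\vert}+\mathrm{e}^{-\vert y\vert}=2\cosh y=2\mathrm{e}^{-t}\cos x$, which rearranges to $\vert y\vert<-t+\log 2+\log\cos x$; this also justifies the phrase ``lies inside'' in the sentence preceding the lemma.

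Since the argument is purely algebraic once \eqref{eq:Angenentimplicit} is available, there is no real obstacle here; the only point needing (minor) care is the bookkeeping of signs together with the restriction $x\in(-\tfrac\pi2,\tfrac\pi2)$, which ensures $\cos x>0$ so that the exponentials and square roots are unambiguous and the curves genuinely meet the slab $\Omega$. One could alternatively deduce the lemma from the avoidance principle, since the $G^\pm_t$ are themselves curve shortening flows and $\Gamma_t$ is disjoint from them as $t\to-\infty$, but this is heavier machinery and unnecessary for the present statement.
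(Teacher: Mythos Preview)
Your proof is correct and follows essentially the same approach as the paper: both arguments substitute the implicit equation \eqref{eq:Angenentimplicit} into the Grim Reaper graph equation and reduce to the impossible identity $2\cosh y=\mathrm{e}^y$ (equivalently $\mathrm{e}^{-y}=0$). Your additional remark that $\Gamma_t$ lies strictly between $G^+_t$ and $G^-_t$ is a nice bonus not spelled out in the paper's proof.
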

\begin{proof}
If the claim did not hold, then, by \eqref{eq:Angenentimplicit}, there would be a point $y\in \R$ satisfying
\begin{align*}
y=-t+\log 2+\log(\mathrm{e}^t\cosh y)\quad\implies\quad \mathrm{e}^y=2\cosh y>\mathrm{e}^y\,,
\end{align*}
an impossibility.
\end{proof}


\section{\texorpdfstring{$O(n)$}{\textit{O(n)}}-invariance}\label{sec:rotatingtheovals}

A convex hypersurface $\Sigma^n\hookrightarrow \R^{n+1}$ is $O(n)$-\emph{invariant} with respect to some unit vector $e_1\in \R^{n+1}$ if $\Sigma^n$ is invariant under the action of $O(n)$ by rotation in the $\{e_1\}^\perp$ hyperplane; more explicitly, an element $h\in O(n)$ acts on a point $p\in \R^{n+1}$ by
\[
p\mapsto \left\langle p,e_1\right\rangle e_1+h\cdot\left(p-\left\langle p,e_1\right\rangle e_1\right)\,,
\]
where $\cdot$ denotes the standard action of $O(n)$ on the $n$-dimensional subspace $\{e_{1}\}^\perp$. 

Given such a hypersurface $\Sigma^n\hookrightarrow \R^{n+1}$, fix any orthogonal unit vector $e_2\in \{e_{1}\}^\perp$ and set $\E^2:=\spa\{e_1,e_2\}$. For convenience, we will use coordinates $(x,y,z):\R^{n+1}\to \R\times \R\times \R^{n-1}$ for  $\R^{n+1}$ defined by
\[
p=x(p)e_1+y(p)e_2+\sum_{i=3}^{n+1}z^i(p)e_i\,,
\]
where $\{e_i\}_{i=3}^{n+1}$ is a basis for $\{e_1,e_2\}^\perp$. Observe that the profile curve $\Gamma:= \Sigma^n\cap \E^2$ is smooth since it is geodesic in $\Sigma^n$.
If we parametrize $\Gamma$ with respect to turning angle by a curve $\gamma:S^1\to \E^2$ then we may parametrize $\Sigma^n$ in polar coordinates $(\theta,\varphi)\in S^{1}\times S^{n-1}$ by
\begin{align*}
F(\theta,\varphi):=x(\theta)e_1+ y(\theta)\varphi\,,
\end{align*}
where we define $(x(\theta),y(\theta))$ by $\gamma(\theta)=x(\theta)e_1+y(\theta)e_2$ and identify $S^{n-1}$ with the unit sphere in $\{e_1\}^\perp$. We claim that the mean curvature of $\Sigma^n$ can be expressed, purely in terms of $\theta$, by
\begin{equation}\label{eq:rotationalH}
H(\theta)=\kappa(\theta)+(n-1)\lambda(\theta)\,,
\end{equation}
where $\kappa$ is the curvature of $\gamma$ and $\lambda$ is its `rotational curvature', defined by
\[
\lambda(\theta)=-\frac{\cos\theta}{y(\theta)}\,.
\]
Indeed, with respect to this parametrization, the Weingarten map takes the form
\[
W_{(\theta,\varphi)}=\left[\begin{array}{cc} \kappa & 0\\ 0& \lambda \mathrm{I}_{n-1\times n-1}\end{array}\right]\,.
\]

We note that $\lambda$ is well-defined and smooth.

\begin{lemma}\label{lem:lambdalemma}
Let $\gamma:S^1\to \R^2$ be a smooth, closed,  convex curve which is reflection symmetric in the $x$-axis and parametrized by turning angle. Set
\begin{equation*}
\lambda(\theta):=\begin{dcases}-\frac{\cos\theta}{y(\theta)}&\text{ when}\quad \theta\in S^1\setminus\{\tfrac{\pi}{2},\tfrac{3\pi}{2}\}\\
\kappa(\theta)& \text{ when} \quad \theta\in\{\tfrac{\pi}{2},\tfrac{3\pi}{2}\}\,,
\end{dcases}
\end{equation*}
where 
\[
y(\theta)=\langle\gamma(\theta,t), e_2\rangle=\int^\theta_{\frac{\pi}{2}}\frac{\sin u\,du}{\kappa(u)}\,.
\]
Then $\lambda\in C^\infty(S^1)$. Moreover, at the poles $\theta=\pm\tfrac{\pi}{2}$,
\[
\lambda=\kappa\quad\text{and}\quad\lambda_\theta=\kappa_\theta=0\,,
\]
where subscripts denote the corresponding partial derivatives.
\end{lemma}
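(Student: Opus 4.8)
The plan is to work with the function $\lambda(\theta)=-\cos\theta/y(\theta)$ on $S^1\setminus\{\tfrac\pi2,\tfrac{3\pi}2\}$ and to show it extends smoothly across the two poles, with the claimed values and first derivatives there. By the reflection symmetry in the $x$-axis, which in terms of turning angle is the symmetry $\theta\mapsto -\theta$ (equivalently $\theta\mapsto 2\pi-\theta$), it suffices to treat the pole $\theta=\tfrac\pi2$; the pole $\theta=-\tfrac\pi2=\tfrac{3\pi}2$ is handled identically (or deduced from the symmetry, noting $y$ is even in $\theta-\tfrac\pi2$ after the obvious reindexing). Also, away from the poles smoothness of $\lambda$ is immediate since $y(\theta)\neq 0$ there and both $\cos\theta$ and $y$ are smooth; so the entire content is the local analysis at $\theta=\tfrac\pi2$.

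The key step is a Taylor expansion of the numerator and denominator about $\theta=\tfrac\pi2$. Write $\phi:=\theta-\tfrac\pi2$, so $\cos\theta=-\sin\phi=-\phi+\tfrac{\phi^3}{6}-\cdots$, an odd function of $\phi$ vanishing to first order. For the denominator, $y(\tfrac\pi2)=0$ and $y'(\theta)=\sin\theta/\kappa(\theta)$, so $y'(\tfrac\pi2)=1/\kappa(\tfrac\pi2)>0$; hence $y(\theta)=\frac{1}{\kappa(\tfrac\pi2)}\phi+O(\phi^2)$, and more precisely, since $y'(\theta)=\sin\theta/\kappa(\theta)$ is smooth, $y$ is a smooth function of $\phi$ vanishing to exactly first order with nonzero leading coefficient. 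Therefore the quotient $\lambda=-\cos\theta/y(\theta)$ is a ratio of two smooth functions of $\phi$, each vanishing to exactly first order, so by the division lemma (or simply by writing $-\cos\theta = \phi\,g(\phi)$ and $y(\theta)=\phi\,h(\phi)$ with $g,h$ smooth and $h(0)=y'(\tfrac\pi2)\neq0$) the quotient $g(\phi)/h(\phi)$ is smooth near $\phi=0$. This gives $\lambda\in C^\infty$ near the pole. Evaluating: $g(0)=1$ (from $-\cos\theta\sim\phi$) and $h(0)=1/\kappa(\tfrac\pi2)$, so $\lambda(\tfrac\pi2)=g(0)/h(0)=\kappa(\tfrac\pi2)$, matching the definition. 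For the first derivative, one uses that $-\cos\theta$ is odd in $\phi$ and that, since $\gamma$ is reflection symmetric in the $x$-axis, $y$ is odd in $\phi$ as well (the curve near its topmost point is symmetric under $\phi\mapsto-\phi$, which sends $y(\theta)\mapsto -y(\theta)$ relative to the vertex height — more carefully, $y'(\theta)=\sin\theta/\kappa(\theta)$ is even in $\phi$ provided $\kappa$ is even in $\phi$, which is exactly the reflection symmetry); hence $g$ and $h$ are both even functions of $\phi$, so $\lambda=g/h$ is even in $\phi$ and therefore $\lambda_\theta(\tfrac\pi2)=0$. The corresponding facts $\kappa(\tfrac\pi2)=\kappa(\tfrac\pi2)$ trivially and $\kappa_\theta(\tfrac\pi2)=0$ follow because $\kappa$ is even in $\phi$ by the reflection symmetry.

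I expect the main obstacle to be bookkeeping the reflection symmetry correctly in the turning-angle parametrization: one must verify that reflection of the curve in the $x$-axis corresponds to $\theta\mapsto-\theta$ together with a sign change of $y$, and hence that $\kappa$ is an even function of $\theta-\tfrac\pi2$ near the top vertex (and even of $\theta$ near $\theta=0$). Once this is pinned down, evenness of $y'$, hence oddness of $y$, hence evenness of $\lambda$ in $\phi$, are immediate and deliver both the value and the vanishing first derivative at the poles simultaneously. An alternative, perhaps cleaner route avoiding symmetry arguments for the derivative is to compute $\lambda_\theta$ directly from the quotient rule, $\lambda_\theta = \frac{\sin\theta}{y} + \frac{\cos\theta\, y'}{y^2} = \frac{\sin\theta}{y} + \frac{\cos\theta \sin\theta}{\kappa y^2}$, and then insert the Taylor expansions $\cos\theta=-\phi+O(\phi^3)$, $y=\frac{\phi}{\kappa(\tfrac\pi2)}+c\phi^2+O(\phi^3)$; the reflection symmetry forces $c=0$ (since $y$ is odd in $\phi$), and with $c=0$ the singular terms cancel and the limit as $\phi\to0$ is $0$. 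Either way, the smooth-division step is the conceptual core and the symmetry is what kills the would-be $O(1/\phi)$ and $O(1)$ obstructions in $\lambda_\theta$.
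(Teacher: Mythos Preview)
Your approach is correct and is essentially the paper's own proof: Taylor expand near $\theta=\tfrac\pi2$, use the reflection symmetry to get evenness of $\kappa$ (hence of $\lambda$) in $\phi=\theta-\tfrac\pi2$, and apply smooth division to the quotient $-\cos\theta/y(\theta)$. One small bookkeeping slip to fix: reflection in the $x$-axis corresponds to $\theta\mapsto\pi-\theta$ (not $\theta\mapsto-\theta$), which is precisely $\phi\mapsto-\phi$, so your conclusion that $\kappa$ is even in $\phi$ is indeed correct; the paper additionally records the identity $\kappa\lambda_\theta=\frac{\sin\theta}{y}(\kappa-\lambda)$, which is your ``alternative route''.
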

\begin{proof}
The claims can be checked using the Taylor expansion
\[
y(\tfrac\pi2+\omega)=\frac{1}{\kappa(\tfrac{\pi}{2})}\left(\o-\frac{1}{6}\left[1+\frac{\kappa_{\theta\theta}(\tfrac{\pi}{2})}{\kappa(\tfrac{\pi}{2})}\right]\o^3\right)+o(\o^4)\,,
\]
the fact that $\o\mapsto \kappa(\tfrac{\pi}{2}+\o)$ and $\o\mapsto \lambda(\tfrac{\pi}{2}+\o)$ are even functions, and the formula
\begin{equation}\label{eq:Dlambda}
\kappa\lambda_\theta=\frac{\sin\theta}{y}(\kappa-\lambda)
\end{equation}
at points $\theta\in S^1\setminus\{\pm\tfrac{\pi}{2}\}$.
\end{proof}

Note that the profile curve $\Gamma$ of an $O(n)$-invariant hypersurface $\Sigma^n$ is necessarily reflection symmetric in the $x$-axis (i.e. the line $\R e_1$). Conversely, if $\Gamma\hookrightarrow \E^2$ is a convex planar curve in $\E^2:=\spa\{e_1,e_2\}\subset \R^{n+1}$ which is reflection symmetric in the line $\R e_1$ then the hypersurface
\[
\Sigma^n:=\{x e_1+y\varphi: xe_1+ye_2\in \Gamma,\; \varphi\in S^{n-1}\subset \{e_1\}^\perp\}
\]
is smooth and $O(n)$-invariant with respect to $e_1$. It is clear that $\Sigma^n$ is smooth away from the poles $\theta=\pm\frac{\pi}{2}$. Smoothness of $\Sigma^n$ at the poles is readily verified by writing $\Sigma^n$ locally near $\theta=\pm\frac{\pi}{2}$ as a graph $(x,f(\vert x\vert))$, where $f$ is a smooth, even function.

\subsection{\texorpdfstring{$O(n)$}{\textit{O(n)}}-invariance and mean curvature flow} Next, given some convex, $O(n)$-invariant embedding $F_0:S^n\to \R^{n+1}$ let $F:S^n\times[-T,0)\to \R^{n+1}$ be the unique (convex, maximal) solution of mean curvature flow satisfying $F(\cdot,-T)=F_0$. 
It follows from uniqueness of the solution and isometry invariance of the mean curvature flow that the $O(n)$-invariance of $F_0$ is preserved under the flow. Given $e_2\in \{e_1\}^\perp$, set $\E^2:=\spa\{e_1,e_2\}$ and denote by $\Gamma_t:=\Sigma^n_t\cap \E^2$ the corresponding profile curve for $\Sigma^n_t:=F(\Sigma^n,t)$. Of course, the normal speed of $\Gamma_t$ at a point $p$ is given by the mean curvature of $\Sigma^n_t$ at $p$. Thus, if we parametrize $\Gamma_t$ with respect to turning angle $\theta$ by a curve $\gamma:S^1\to \E^2$ then the component of the velocity normal to $\Gamma_t$ is given by
\[
(\partial_t\gamma)^\perp(\theta,t)=-H(\theta,t)\nu(\theta)\,.
\]
It is readily checked that the tangential component of the velocity must be $-H_\theta\tau$ (cf. \cite{GaHa86}) so that
\begin{equation}\label{eq:tangentialrotationalMCF}
\partial_t\gamma(\theta,t)=-H(\theta,t)\nu(\theta)-H_\theta(\theta,t)\tau(\theta)\,.
\end{equation}


We will need the following evolution equations.
\begin{lemma}\label{eq:rotationalevolutionequations}
Let $\gamma:S^1\times[-T,0)\to\R^{n+1}$ be a solution of \eqref{eq:tangentialrotationalMCF}, where $H$ is given by \eqref{eq:rotationalH}. Then
\begin{align}
\kappa_t
={}&\kappa^2\kappa_{\theta\theta}+\vert \mathrm{II}\vert^2\kappa-(n-1)\lambda\tan\theta\left(\lambda\kappa_\theta-2\kappa\lambda_\theta\right)\,,\label{eq:rotationalevolutionkappa2}
\end{align}
\begin{equation}\label{eq:rotationalevolutionlambda}
\lambda_t=\kappa^2\lambda_{\theta\theta}+\vert \mathrm{II}\vert^2\lambda-\lambda(H+\kappa)\tan\theta \lambda_\theta\,,
\end{equation}
\begin{equation}\label{eq:rotationalevolutionH}
H_t=\kappa^2H_{\theta\theta}+\vert \mathrm{II}\vert^2H-(n-1)\lambda^2\tan\theta H_\theta
\end{equation}
and
\begin{equation}\label{eq:rotationalevolutionarea}
-\frac{d}{dt}A=2\pi+(n-1)\int\frac{\lambda}{\kappa}\,d\theta\,,
\end{equation}
where $\vert \mathrm{II}\vert^2:=\kappa^2+(n-1)\lambda^2$ and $A(t)$ is the area enclosed by $\Gamma_t$.
\end{lemma}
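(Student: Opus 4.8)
The plan is to derive all four statements from the flow equation~\eqref{eq:tangentialrotationalMCF}, the formula $H=\kappa+(n-1)\lambda$ from~\eqref{eq:rotationalH}, and the identity~\eqref{eq:Dlambda}. First I would fix the bookkeeping for the turning-angle parametrization: since $\theta$ is the turning angle, the frame $\tau=(\cos\theta,\sin\theta)$, $\nu=(\sin\theta,-\cos\theta)$ is independent of $t$, one has $\partial_\theta\nu=\tau$ and $\partial_\theta\tau=-\nu$, the arc-length element is $ds=\kappa^{-1}\,d\theta$ so that $\gamma_\theta=\kappa^{-1}\tau$, and $\partial_t$ commutes with $\partial_\theta$.

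Everything then reduces to the two basic evolution equations
\[
\kappa_t=\kappa^2\bigl(H+H_{\theta\theta}\bigr)\qquad\text{and}\qquad \lambda_t=\lambda^2\bigl(H-\tan\theta\,H_{\theta}\bigr)\,.
\]
For the first, differentiating~\eqref{eq:tangentialrotationalMCF} in $\theta$ and using $\partial_\theta\nu=\tau$, $\partial_\theta\tau=-\nu$ gives $\partial_t\gamma_\theta=-(H+H_{\theta\theta})\tau$; comparing with $\partial_t\gamma_\theta=\partial_t(\kappa^{-1}\tau)=-\kappa^{-2}\kappa_t\,\tau$ yields the claim. For the second, writing $y=\langle\gamma,e_2\rangle$ and pairing~\eqref{eq:tangentialrotationalMCF} with $e_2$ gives $y_t=H\cos\theta-H_\theta\sin\theta$; differentiating $\lambda=-\cos\theta/y$ in $t$ and simplifying with $\cos\theta/y=-\lambda$ then produces $\lambda_t=\lambda^2H-\lambda^2\tan\theta\,H_\theta$. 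These are valid on $S^1\setminus\{\pm\tfrac{\pi}{2}\}$; at the poles $\lambda=\kappa$, $\lambda_\theta=\kappa_\theta=0$ by Lemma~\ref{lem:lambdalemma}, and they extend there by continuity, the apparently singular quantity $\tan\theta\,H_\theta$ having finite limit $-H_{\theta\theta}$. Adding them gives $H_t=\kappa_t+(n-1)\lambda_t=\kappa^2(H+H_{\theta\theta})+(n-1)\lambda^2(H-\tan\theta\,H_\theta)$, which regroups at once into~\eqref{eq:rotationalevolutionH} using $\vert\mathrm{II}\vert^2=\kappa^2+(n-1)\lambda^2$.

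To reach~\eqref{eq:rotationalevolutionkappa2} and~\eqref{eq:rotationalevolutionlambda} in the stated forms I would substitute $H=\kappa+(n-1)\lambda$ into the two basic equations and invoke the algebraic identity
\[
\kappa^2\lambda_{\theta\theta}=\kappa\lambda(\lambda-\kappa)+2\kappa\lambda\tan\theta\,\lambda_\theta-\lambda^2\tan\theta\,\kappa_\theta\,,
\]
which follows by differentiating~\eqref{eq:Dlambda} once---after rewriting it, via $\sin\theta/y=-\lambda\tan\theta$, as $\kappa\lambda_\theta=\lambda\tan\theta(\lambda-\kappa)$---and re-substituting~\eqref{eq:Dlambda}; a short computation then matches both sides. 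For the area equation, the region enclosed by $\Gamma_t$ changes at rate $\oint\langle\partial_t\gamma,\nu\rangle\,ds=-\oint H\,ds$ (the tangential term $-H_\theta\tau$ contributes nothing), and since $ds=\kappa^{-1}\,d\theta$ and $H=\kappa+(n-1)\lambda$,
\[
\oint H\,ds=\int_0^{2\pi}\Bigl(1+(n-1)\tfrac{\lambda}{\kappa}\Bigr)\,d\theta=2\pi+(n-1)\int\tfrac{\lambda}{\kappa}\,d\theta\,,
\]
which is~\eqref{eq:rotationalevolutionarea}.

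The only genuinely mechanical step---and the one where care is needed---is the differentiation-and-resubstitution that yields the cubic identity for $\kappa^2\lambda_{\theta\theta}$, together with the subsequent matching of the $\tan\theta$-terms in~\eqref{eq:rotationalevolutionkappa2} and~\eqref{eq:rotationalevolutionlambda}; everything else is a one- or two-line manipulation. I would also double-check the sign conventions in $\partial_\theta\tau=-\nu$ and in the first-variation-of-area formula, since those are the easiest places to make an error.
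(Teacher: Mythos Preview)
Your proposal is correct and follows essentially the same route as the paper: both first establish the basic identities $\kappa_t=\kappa^2(H_{\theta\theta}+H)$ and $\lambda_t=\lambda^2(H-\tan\theta\,H_\theta)$, then use the identity \eqref{eq:Dlambda} and its $\theta$-derivative (your cubic formula for $\kappa^2\lambda_{\theta\theta}$ is exactly the paper's \eqref{eq:D2lambda} after substituting $\sin\theta/y=-\lambda\tan\theta$) to rewrite these in the stated forms, and the area computation is identical. The only cosmetic difference is ordering---you obtain \eqref{eq:rotationalevolutionH} by adding the basic equations first, whereas the paper adds the refined versions \eqref{eq:rotationalevolutionkappa2} and \eqref{eq:rotationalevolutionlambda}.
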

\begin{proof}
First, we compute directly from \eqref{eq:tangentialrotationalMCF} (cf. \cite{GaHa86})
\begin{equation}
\kappa_t=\kappa^2\left(H_{\theta\theta}+H\right)\label{eq:rotationalevolutionkappa1}
\end{equation}
and, from the definition of $\lambda$,
\[
\lambda_t=\lambda^2\left(H-\tan\theta H_\theta\right)\,.
\]
The first two identities, \eqref{eq:rotationalevolutionkappa2} and \eqref{eq:rotationalevolutionlambda}, now follow from the identity \eqref{eq:Dlambda} and its derivative
\begin{equation}\label{eq:D2lambda}
\kappa_\theta\lambda_\theta+\kappa\lambda_{\theta\theta}=(\kappa\lambda_\theta)_\theta=-\lambda(\kappa-\lambda)+\frac{\sin\theta}{y}\left(\kappa_\theta-2\lambda_\theta\right)\,.
\end{equation}
Together they imply the identity \eqref{eq:rotationalevolutionH} for $H$. To obtain the final identity, we observe that
\[
-\frac{d}{dt}A=\int_0^{L}H\,ds=2\pi+(n-1)\int\frac{\lambda}{\kappa}\,d\theta\,,
\]
where $s(t)$ is the arc-length parameter for $\Gamma_t$ and $L(t)$ is its length.
\end{proof}

\subsection{The approximating solutions}\label{sec:approximatingsolutions}

Our approximating solutions are given by evolving rotated timeslices of the Angenent oval by mean curvature flow: Denote the Angenent oval by $\gamma:S^1\times(-\infty,0)\to\R^2$ and set $\Gamma_t:=\gamma(S^1,t)$. Given any $R>0$, we rotate the curve $\Gamma^R:=\Gamma_{-R}$ about the $x$-axis (the `short' axis) to form an $O(n)$-invariant hypersurface $\Sigma^R$ in $\R^{n+1}$; that is, identifying $\R^2$ with $\E^2:=\spa\{e_1,e_2\}\subset \R^{n+1}$, we consider the hypersurface
\[
\Sigma^R:=\{x_R(\theta)e_1+y_R(\theta)\varphi:\theta\in S^1,\; \varphi\in S^{n-1}\subset \{e_1\}^\perp\}\,,
\]
where $x_R$ and $y_R$ are defined by $\gamma(\theta,-R)=x_R(\theta)e_1+y_R(\theta)e_2$. 

We want to evolve the rotated ovals by mean curvature flow. So fix an initial parametrization $F^R_0:\Sigma^R\to \R^{n+1}$ and consider the $O(n)$-invariant solution $F_R:S^n\times[-T_R,0)\to\R^{n+1}$ of mean curvature flow with initial datum $F_R(\cdot,0)=F^R_0$.

Finally, we denote by $\Gamma^R_t:=\Sigma^R_t\cap \E^2$ be the profile curve of $\Sigma^R_t:=F_R(S^n,t)$ and denote its turning angle parametrization by $\gamma_R:S^1\times[-T_R,0)$. Then $\gamma_R$ satisfies \eqref{eq:tangentialrotationalMCF}. As usual, we normalize $\theta$ so that the unit tangent vector field $\tau_R$ satisfies $\tau_R(0)=e_1$.

By uniqueness of solutions of the mean curvature flow, we note that the reflection symmetry of $\gamma_R$ in both axes is preserved under the flow. Since the solution also remains strictly convex \cite{Hu84}, it follows that the points $\gamma_R(\tfrac{\pi}{2},t)$ and $\gamma_R(\pi,t)$ are the unique points of $\Gamma^R_t$ which lie on the positive $x$-axis and the positive $y$-axis respectively. These points will play an important role in our analysis.

\section{Existence}

We want to show that our sequence of approximating solutions $F_R:S^n\times[-T_R,0)\to\R^{n+1}$ converges to a compact ancient solution of mean curvature flow along some subsequence $R_i\to\infty$. It will suffice to obtain uniform estimates for the time of existence, $T_R$, the mean curvature, $H_R$, and for the vertical displacement. We will also need a lower bound for the horizontal displacement to ensure that the limit solution does not lie in a smaller slab. We will return to the approximating solutions in Section \ref{sec:uniqueness}, where they play an important role in our investigation of the asymptotics of the limit solution.

Observe that, by the reflection symmetries of $\gamma_R$, $\partial_\theta\kappa_R(\theta,t)=0$ when $\theta\in\{0,\tfrac{\pi}{2},\pi,\tfrac{3\pi}{2}\}$ and, by the rotation symmetry of $\gamma_R$, $(\kappa_R-\lambda_R)(\theta,t)=0$ when $\theta\in\{\tfrac{\pi}{2},\tfrac{3\pi}{2}\}$ for all $t\in [-T_R,0)$. Recalling the formula \eqref{eq:Dlambda} for $\lambda_\theta$, this implies, in particular, that the set $\{0,\tfrac{\pi}{2},\pi,\tfrac{3\pi}{2}\}$ is critical for $H_R$ for all $t\in[-T_R,0)$. We will see that these are the only critical points of $\kappa_R$ and $\lambda_R$ (and hence $H_R$) at the initial time. Using the maximum principle, we will show that $\kappa_R$ and $\lambda_R$ develops no new critical points along $\gamma_R$ during its evolution.

\begin{lemma} \label{lem:criticalpoints}
For every $t\in [-T_R,0)$
\[
\partial_\theta\k_R(\cdot,t)>0\;\;\text{in}\;\;\left(\tfrac{\pi}{2},\pi \right)\;\;\;\text{and}\;\;\; (\kappa_R-\lambda_R)(\cdot,t)>0\;\;\text{in}\;\;\left(\tfrac{\pi}{2},\tfrac{3\pi}{2}\right)\,.
\]
In particular, for every $t\in[-T_R,0)$,
\[
H^R_{\min}(t):=\min_{\omega\in S^1}H_R(\omega,t)=H_R\left(\tfrac{\pi}{2},t\right)<H_R(\theta,t)\quad\text{for all}\quad \theta\in \left(\tfrac{\pi}{2},\pi\right]
\]
and
\[
H^R_{\max}(t):=\max_{\omega\in S^1}H_R(\omega,t)=H_R(\pi,t)>H_R(\theta,t)\quad\text{for all}\quad \theta\in \left[\tfrac{\pi}{2},\pi\right)\,.
\]
\end{lemma}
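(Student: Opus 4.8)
The plan is to track the critical points of $\kappa_R$ and $\lambda_R$ (equivalently, the zeros of $\partial_\theta\kappa_R$ and $\kappa_R-\lambda_R$) using the parabolic maximum principle, exploiting the reflection symmetries of $\gamma_R$ to reduce everything to the quarter-interval $\big(\tfrac{\pi}{2},\pi\big)$. First I would record the initial-time statement: since $\Gamma^R=\Gamma_{-R}$ is an Angenent oval, the computation in Section \ref{Angenent ovals} shows $\partial_\theta\kappa_R(\cdot,-R)>0$ on $\big(\tfrac\pi2,\pi\big)$ (from \eqref{k}), and Lemma \ref{lem:lambdalemma} together with \eqref{eq:Dlambda} — using that $y>0$, $\sin\theta>0$ there, and that $\kappa>\lambda$ reduces to $\kappa$ being the curvature of a curve whose rotational curvature is smaller, which is checked directly for the oval — gives $(\kappa_R-\lambda_R)(\cdot,-R)>0$ on $\big(\tfrac\pi2,\tfrac{3\pi}{2}\big)$. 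So the claimed inequalities hold at $t=-R$ with the correct boundary behavior: both quantities vanish at the endpoints $\theta=\tfrac\pi2,\tfrac{3\pi}{2}$ (and $\partial_\theta\kappa_R$ also at $\theta=\pi$) by the symmetries noted just before the lemma.

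Next I would run two separate maximum-principle arguments on $Q:=\big(\tfrac\pi2,\pi\big)\times[-T_R,0)$. For the second inequality, differentiating \eqref{eq:rotationalevolutionkappa2} and \eqref{eq:rotationalevolutionlambda} and subtracting, the function $w:=\kappa_R-\lambda_R$ satisfies a linear parabolic equation of the form $w_t=\kappa^2 w_{\theta\theta}+b(\theta,t)w_\theta+c(\theta,t)w$ (the zeroth-order coefficient $c$ absorbs $|\mathrm{II}|^2$ and, crucially, the terms proportional to $\lambda_\theta$ via \eqref{eq:Dlambda}, which is itself $\propto w$), with smooth coefficients on any compact subinterval; $w>0$ on the parabolic boundary of each such subinterval (initially by the oval computation, and on the spatial boundary because $w=0$ there while the fact that it is a nonnegative solution vanishing on the boundary forces $w_\theta<0$ at $\theta=\tfrac{3\pi}{2}^-$ and $w_\theta>0$ at $\theta=\tfrac\pi2^+$ by Hopf, so the zero is not crossed). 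Hence $w>0$ on the open interval for all time. For the first inequality I would set $v:=\partial_\theta\kappa_R$ and differentiate \eqref{eq:rotationalevolutionkappa2} once more in $\theta$; again one gets $v_t=\kappa^2 v_{\theta\theta}+(\text{lower order})$, with $v=0$ on all three critical angles $\tfrac\pi2,\pi,\tfrac{3\pi}{2}$ and $v>0$ initially on $\big(\tfrac\pi2,\pi\big)$, so the strong maximum principle and Hopf lemma keep $v>0$ there. The "in particular" conclusions about $H^R_{\min}$ and $H^R_{\max}$ then follow immediately from \eqref{eq:rotationalH}: on $\big(\tfrac\pi2,\pi\big)$ one has $H_\theta=\kappa_\theta+(n-1)\lambda_\theta$ with $\kappa_\theta>0$ and, by \eqref{eq:Dlambda} and $w>0$, also $\lambda_\theta>0$, so $H_R$ is strictly increasing on $\big[\tfrac\pi2,\pi\big]$ and the extrema sit at $\theta=\tfrac\pi2$ and $\theta=\pi$.

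The main obstacle is organizing the maximum principle correctly at the spatial boundary: the endpoints $\theta=\tfrac\pi2,\tfrac{3\pi}{2}$ are not true boundary points (the curve is smooth and closed there), so one cannot simply impose boundary data. I would handle this either by working with the even/odd symmetry — $\theta\mapsto\kappa_R(\tfrac\pi2+\omega)$ and $\theta\mapsto\lambda_R(\tfrac\pi2+\omega)$ are even, so $v$ is odd and $w$ is even about $\tfrac\pi2$, meaning a first touching of $0$ by $v$ at an interior point of $\big(\tfrac\pi2,\pi\big)$ is a genuine interior minimum and is excluded by the strong maximum principle, while a first zero of $v$ forming at $\theta=\tfrac\pi2$ or $\pi$ is excluded by the Hopf lemma applied on a one-sided neighborhood (using $v_{\theta\theta}\ne0$ there, equivalently $\kappa$ is not locally constant, which holds by strict convexity/analyticity of the flow) — or, cleanly, by a standard continuity/openness argument: the set of times for which the inequalities hold is open by the boundary analysis and nonempty, and closed by the strong maximum principle, hence all of $[-T_R,0)$. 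A secondary technical point is to verify that the coefficient functions stay smooth up to $\theta=\tfrac\pi2,\tfrac{3\pi}{2}$ despite the $\tan\theta$ factors in \eqref{eq:rotationalevolutionkappa2}–\eqref{eq:rotationalevolutionlambda}; this is exactly the content of Lemma \ref{lem:lambdalemma}, which guarantees $\lambda_\theta$ vanishes to the right order at the poles so that $\lambda\tan\theta\,\lambda_\theta$ and its relatives extend smoothly. Everything else is a routine (if lengthy) differentiation of the evolution equations.
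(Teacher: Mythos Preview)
Your overall strategy matches the paper's: verify the inequalities at $t=-T_R$ by direct computation on the Angenent oval, then propagate them by parabolic maximum principle. The treatment of $v=\kappa_\theta$ is essentially the paper's (though note the paper first establishes $\kappa>\lambda$ and then \emph{uses} it to bound the $\lambda_\theta$-terms in the $v$-inequality; your ``lower order'' gloss hides this dependence, and in fact the argument does not go through for $v$ without it).

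There is, however, a genuine gap in your treatment of $w=\kappa_R-\lambda_R$. You invoke the Hopf lemma at $\theta=\tfrac{\pi}{2}$ to get $w_\theta(\tfrac{\pi}{2}^+)>0$, but---as you yourself observe a few lines later---$w$ is \emph{even} about $\tfrac{\pi}{2}$ (both $\kappa$ and $\lambda$ are), so $w_\theta(\tfrac{\pi}{2})=0$ identically. The boundary zero of $w$ is a double zero and Hopf gives nothing there; your openness/closedness continuation argument inherits the same defect, since ``openness'' at the boundary is precisely what Hopf was supposed to supply. Writing $w_t=\kappa^2 w_{\theta\theta}+bw_\theta+cw$ is correct, but this linear form by itself does not prevent the double boundary zero from spawning an interior zero.

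The paper circumvents this by working with $u:=\kappa/\lambda$ rather than $w=\kappa-\lambda$. The point is not cosmetic: the evolution \eqref{eq:rotationalevolutionu} has the inhomogeneous term $-2\tan^2\theta\,H(\kappa-\lambda)=-2\tan^2\theta\,H\lambda(u-1)$ on the right, so at any interior minimum of $u$ one reads off directly that $u\ge 1$. This is a weak-maximum-principle argument that needs no boundary-derivative information at the poles, and so is insensitive to the fact that the zero of $u-1$ there is double. If you want to stick with $w$, you would need to desingularise the double zero (e.g.\ divide by a function vanishing to the same order at the poles) and check the resulting equation still has the right sign structure; but at that point you have essentially reinvented the quotient $u$.
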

\begin{proof}

For convenience, we will drop the subscript $R$ and use subscripts $t$ and $\theta$ to denote the corresponding partial derivatives.

Consider the functions $u:=\frac{\kappa}{\lambda}$ and $v:=\kappa_\theta$. By Lemma \ref{lem:lambdalemma}, $u=1$ on the spatial boundary of $(-\frac{\pi}{2},\frac{\pi}{2})\times[-T,0)$. We claim that $u>1$ in $(-\frac{\pi}{2},\frac{\pi}{2})$ at the initial time. Indeed, differentiating \eqref{k} we find that $v=\kappa_\theta>0$ when $\theta\in(\frac{\pi}{2},\pi)$. Thus,
\[
y(\theta,-T)=\int_{\frac{\pi}{2}}^\theta\frac{\sin u}{\kappa(u,-T)}du>-\frac{\cos\theta}{\kappa(\theta,-T)}
\]
and hence, by the definition of $\lambda$, $\kappa(\theta,-T)>\lambda(\theta,-T)$ for any $\theta\in (\frac{\pi}{2},\pi)$. The claim then follows from the symmetries of the profile curve.

Recalling the evolution equations \eqref{eq:rotationalevolutionkappa2} and \eqref{eq:rotationalevolutionlambda} for $\kappa$ and $\lambda$ from Lemma \ref{eq:rotationalevolutionequations}, we find that $u$ satisfies
\begin{equation}\label{eq:rotationalevolutionu}
u_t-\kappa^2u_{\theta\theta}-2\kappa^2u_\theta\frac{\lambda_\theta}{\lambda}=-(n-1)\lambda^2\tan\theta\, u_\theta-2\tan^2\theta H(\kappa-\lambda)
\end{equation}
in $(-\frac{\pi}{2},\frac{\pi}{2})\times[-T,0)$. Thus, if $u$ reaches a new local minimum at some interior point then, at that point,
\[
0\geq u_t-\kappa^2u_{\theta\theta}=-2\tan^2\theta H(\kappa-\lambda)
\]
and hence $\kappa\geq \lambda$. We conclude that $u\geq 1$ in $[-\frac{\pi}{2},\frac{\pi}{2}]\times[-T,0)$ and hence in all of $S^1\times[-T,0)$. In fact, by the strong maximum principle, we have the strict inequality $u>1$ in the interior $(-\frac{\pi}{2},\frac{\pi}{2})\times(-T,0)$.

Next, we consider the function $v:=\kappa_\theta$ for $\theta\in(\frac{\pi}{2},\pi)$. By the reflection symmetries of the profile curve, we find that $v=0$ on the spatial boundary $\{\frac{\pi}{2},\pi\}\times[-T,0)$ and we saw above that $v>0$ in $(\frac{\pi}{2},\pi)$ at the initial time. Recalling \eqref{eq:rotationalevolutionkappa1}, we find that $v$ satisfies
\begin{align*}
v_t={}&\kappa^2(H_{\theta\theta}+H)_\theta+2\kappa\kappa_\theta(H_{\theta\theta}+H)\\
={}&\kappa^2(v_{\theta\theta}+v)+2\kappa v(v_\theta+\kappa)+(n-1)\kappa\left[2v(\lambda_{\theta\theta}+\lambda)+\kappa(\lambda_{\theta\theta}+\lambda)_\theta\right]
\end{align*}
in $(\frac{\pi}{2},\pi)\times (-T,0)$. 

To control the final term, we differentiate \eqref{eq:D2lambda} to obtain
\begin{align*}
\kappa_{\theta\theta}\lambda_\theta+2\kappa_\theta\lambda_{\theta\theta} +\kappa\lambda_{\theta\theta\theta}={}&2\lambda\lambda_\theta-\kappa\lambda_\theta -\lambda\kappa_\theta+\frac{\kappa\lambda_\theta}{\kappa-\lambda}(\kappa_{\theta\theta}-2\lambda_{\theta\theta})\\
{}&-\left(\lambda+\frac{\kappa\lambda^2_\theta}{(\kappa-\lambda)^2}\right)(\kappa_\theta-2\lambda_\theta)\,.
\end{align*}
Combining this with \eqref{eq:D2lambda}, we find
\begin{align*}
\kappa(\lambda_{\theta\theta}+\lambda)_\theta+2v(\lambda_{\theta\theta}+\lambda)={}&6\lambda_\theta\left(\lambda+\frac{\kappa\lambda_\theta^2}{(\kappa-\lambda)^2}\right)+\frac{\lambda\lambda_\theta}{\kappa-\lambda}v_{\theta}\\
{}&-\frac{\lambda_\theta^2}{(\kappa-\lambda)^2}(H+\lambda)v\,.
\end{align*}
We have proved that $\kappa>\lambda$, so we can estimate $H+\lambda< 3\kappa$ and, recalling \eqref{eq:Dlambda}, $\lambda_\theta>0$. Thus,
\begin{align*}
v_t\geq{}&\kappa^2(v_{\theta\theta}+v)+2\kappa v(v_\theta+\kappa)+(n-1)\kappa\left(\frac{\lambda\lambda_\theta}{\kappa-\lambda}v_{\theta}-3\frac{\lambda_\theta^2}{(\kappa-\lambda)^2}\kappa v\right)\\
={}&\kappa^2v_{\theta\theta}+(2\kappa v-(n-1)\lambda^2\tan\theta)v_\theta+3(\kappa^2-(n-1)\lambda^2\tan^2\theta)v\,.
\end{align*}
The strong maximum principle now implies that $v>0$ in $(\frac{\pi}{2},\pi)\times(-T,0)$, completing the proof of the lemma.
\end{proof}

For each $t\in [-T_R,0)$ we define the horizontal and vertical displacements
\begin{equation}\label{distances}
h_R(t):=\max_{\theta\in S^1}|\langle \gamma_R(\theta, t),e_1\rangle| \quad \text{and}\quad \ell_R(t):=\max_{\theta\in S^1}|\langle \gamma_R(\theta, t), e_2\rangle|\,.
\end{equation}
Note that, since $\Gamma^R_t$ is convex and symmetric under reflection about the axes,
\[
h_R(t)=\langle\gamma_R(\tfrac{\pi}{2},t),e_1\rangle\quad \text{and}\quad \ell_R(t)=\langle\gamma_R(\pi,t),e_2\rangle\,.
\]

The product of $h_R$ and $\ell_R$ is controlled by the area enclosed by $\gamma_R$.
\begin{lemma}\label{hl-lemma} For any $t\in [-T_R,0)$,
\[
-\tfrac{\pi}{2}t\le h_R(t)\ell_R(t)\le -n\pi t\,,
\]
where $h_R(t)$ and $\ell_R(t)$ are given by \eqref{distances}.
\end{lemma}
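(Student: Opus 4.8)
The plan is to compare the enclosed area $A_R(t)$ of the profile curve $\Gamma^R_t$ with the product $h_R(t)\ell_R(t)$ from both sides, using convexity and the symmetry of $\Gamma^R_t$ under reflection in both axes, and then to compute $A_R(t)$ via the area evolution formula \eqref{eq:rotationalevolutionarea}. Since $\Gamma^R_t$ is convex and symmetric in both coordinate axes, it is contained in the rectangle $[-h_R,h_R]\times[-\ell_R,\ell_R]$ and contains the rhombus with vertices $(\pm h_R,0)$, $(0,\pm\ell_R)$; this gives immediately
\[
2h_R(t)\ell_R(t)\le A_R(t)\le 4h_R(t)\ell_R(t)\,.
\]
So it suffices to show $-\pi t\le A_R(t)\le -2n\pi t$ for all $t\in[-T_R,0)$.

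For this I would integrate \eqref{eq:rotationalevolutionarea}: since $A_R(0)=0$ (the solution shrinks to a point at time $0$),
\[
A_R(t)=-\int_t^0\frac{d}{ds}A_R(s)\,ds=\int_t^0\left(2\pi+(n-1)\int_{S^1}\frac{\lambda_R}{\kappa_R}\,d\theta\right)ds\,.
\]
The lower bound $A_R(t)\ge -2\pi t\ge -\pi t$ follows once we know $\int_{S^1}\frac{\lambda_R}{\kappa_R}\,d\theta\ge 0$; but from Lemma \ref{lem:criticalpoints} (and the symmetries) we have $\kappa_R\ge\lambda_R>0$ everywhere... wait, one must be careful: $\lambda_R$ changes sign, being $-\cos\theta/y$, negative on the arcs where $\cos\theta$ and $y$ have the same sign. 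The correct elementary observation is $\int_{S^1}\frac{\lambda_R}{\kappa_R}\,d\theta=\int_0^{L_R}\lambda_R\,ds_R\ge 0$, since $\int_0^{L_R}\lambda_R\,ds_R$ is (up to the factor $(n-1)$) the integral of one principal-curvature component of a convex hypersurface, equivalently $\int_{S^{n-1}}(\text{signed}) $; concretely, $\int_{S^1}\lambda_R/\kappa_R\,d\theta=-\int_{S^1}\frac{\cos\theta}{y_R(\theta)\kappa_R(\theta)}\,d\theta$, and pairing $\theta$ with $\pi-\theta$ (which sends $\cos\theta\mapsto-\cos\theta$, $y_R\mapsto y_R$ by the vertical reflection symmetry, $\kappa_R\mapsto\kappa_R$) shows the contributions from $(\tfrac\pi2,\tfrac{3\pi}{2})$ and its complement combine with a definite sign — on $(-\tfrac\pi2,\tfrac\pi2)$, $\cos\theta>0$ and $y_R$ ranges over positive and negative values, so the honest way is to use that $\int \lambda_R\,ds_R = (n-1)^{-1}\!\int (H_R-\kappa_R)\,ds_R$ and $\int_0^{L_R}\kappa_R\,ds_R=2\pi$, while $\int_0^{L_R}H_R\,ds_R=-\frac{d}{ds}A_R\ge 0$; hence $\int\lambda_R/\kappa_R\,d\theta\ge -2\pi/(n-1)$, giving $\frac{d}{dt}A_R\le 0$ only weakly. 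I would therefore instead bound $\int_{S^1}\lambda_R/\kappa_R\,d\theta$ directly by $2\pi$ from above (each rotational curvature is dominated appropriately) and by $0$ from below using $\kappa_R\ge|\lambda_R|$, which does hold: on the arcs where $\lambda_R<0$ one checks $|\lambda_R|=|\cos\theta|/|y_R|\le\kappa_R$ by the same chord-versus-arc estimate as in Lemma \ref{lem:criticalpoints}. This yields $2\pi\le-\frac{d}{dt}A_R\le 2n\pi$, and integrating from $t$ to $0$ gives $-2\pi t\le A_R(t)\le -2n\pi t$, whence $-\tfrac\pi2 t\le\tfrac14 A_R(t)\le h_R\ell_R$ and $h_R\ell_R\le\tfrac12 A_R(t)\le -n\pi t$.

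The main obstacle I anticipate is the sign control of $\int_{S^1}\frac{\lambda_R}{\kappa_R}\,d\theta$, i.e. establishing the clean two-sided bound $0\le\int_{S^1}\frac{\lambda_R}{\kappa_R}\,d\theta\le 2\pi$: the upper bound is really the statement that $\int_0^{L_R}\lambda_R\,ds_R\le 2\pi$, which should follow from a Gauss–Bonnet-type identity for the $O(n)$-invariant hypersurface $\Sigma^R_t$ (the integral of $\lambda_R$ over the profile is, after rotating, controlled by the total curvature of a geodesic slice), and the lower bound reduces to the pointwise inequality $\kappa_R\ge|\lambda_R|$ which I would prove exactly as the claim $u\ge 1$ was proved in Lemma \ref{lem:criticalpoints}, noting that on the arc $(\tfrac\pi2,\pi)$ we even have $\kappa_R>\lambda_R>0$ there and extending by the reflection symmetries. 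Everything else is routine: the geometric sandwiching of $A_R$ between rectangle and rhombus, and the integration of the ODE inequality using $A_R(0)=0$.
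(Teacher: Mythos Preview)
Your overall structure is exactly the paper's: sandwich $A_R(t)$ between $2h_R\ell_R$ and $4h_R\ell_R$ by convexity and the two reflection symmetries, then obtain $-2\pi t\le A_R(t)\le -2n\pi t$ by integrating \eqref{eq:rotationalevolutionarea}. The difficulty you struggle with, however, is a phantom.

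The rotational curvature $\lambda_R$ does \emph{not} change sign. On $(\tfrac{\pi}{2},\tfrac{3\pi}{2})$ one has $y_R>0$ and $\cos\theta\le 0$, while on $(-\tfrac{\pi}{2},\tfrac{\pi}{2})$ one has $y_R<0$ and $\cos\theta\ge 0$; in either case $\lambda_R=-\cos\theta/y_R\ge 0$, and at the poles $\lambda_R=\kappa_R>0$ by Lemma~\ref{lem:lambdalemma}. (Geometrically this is just the statement that $\lambda_R$ is a principal curvature of the convex hypersurface $\Sigma^R_t$.) Combined with $\kappa_R\ge\lambda_R$ from Lemma~\ref{lem:criticalpoints}, extended to all of $S^1$ by the reflection symmetry in the $x$-axis, you get the pointwise bounds $0<\lambda_R/\kappa_R\le 1$ and hence $0\le\int_{S^1}\lambda_R/\kappa_R\,d\theta\le 2\pi$ immediately. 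No Gauss--Bonnet identity, pairing argument, or estimate of the form $\kappa_R\ge|\lambda_R|$ is needed; and in fact your stated route to the lower bound is invalid as written, since $\kappa_R\ge|\lambda_R|$ alone would only give $\int_{S^1}\lambda_R/\kappa_R\,d\theta\ge -2\pi$, not $\ge 0$.

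Once you correct the sign of $\lambda_R$, your argument coincides with the paper's line for line.
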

\begin{proof}
Let $A_R(t)$ denote the area of the region enclosed by the profile curve. Then, estimating $0<\lambda\leq\kappa$ and integrating \eqref{eq:rotationalevolutionarea}, we may estimate
\begin{equation}\label{area-t estimate}
-2\pi t\le A_R(t)\le -2n\pi t\,.
\end{equation}
On the other hand, since $\Gamma^R_t$ is convex, it lies inside the rectangle $[-h_R(t),h_R(t)]\times[-\ell_R(t),\ell_R(t)]$ and outside the parallelogram with vertices $\pm\gamma_R(\tfrac{\pi}{2},t)=\pm(h_R(t),0)$ and $\pm\gamma_R(\pi,t)=\pm(0,\ell_R(t))$, and hence
\begin{equation}\label{area-hl estimate}
2 h_R(t)\ell_R(t)\le A_R(t)\le 4h_R(t)\ell_R(t).
\end{equation}
Combining \eqref{area-t estimate} and \eqref{area-hl estimate} yields the lemma.
\end{proof}
Combining Lemma \ref{hl-lemma} with the estimates, provided by Lemma \ref{CR-lem}, for the initial values $h_R(-T_R)$ and $\ell_R(-T_R)$ yields a crude estimate for $T_R$. 
\begin{corollary} \label{time}
\[
\begin{split}
\frac{R}{2n} (1-\mathrm{e}^{-R})\le T_R\le R+\log 2\,.
\end{split}
\]
\end{corollary}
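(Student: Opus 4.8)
The plan is to combine the area bound from Lemma~\ref{hl-lemma} with the initial displacement estimates from Lemma~\ref{CR-lem}, evaluated at the initial time $t=-T_R$, where the profile curve $\Gamma^R_{-T_R}$ is exactly the Angenent oval $\Gamma_{-R}$. The point is simply that we know the horizontal and vertical displacements of the approximating solution at its initial time explicitly — they are the horizontal and vertical displacements $h(-R)$ and $\ell(-R)$ of the time $-R$ slice of the Angenent oval — and Lemma~\ref{hl-lemma} relates the product $h_R(t)\ell_R(t)$ to $-t$ for every $t$, in particular at $t=-T_R$.

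First I would record that $h_R(-T_R)=h(-R)$ and $\ell_R(-T_R)=\ell(-R)$, since $\Gamma^R_{-T_R}=\Gamma_{-R}$ by construction. Then, plugging $t=-T_R$ into Lemma~\ref{hl-lemma}, we get
\[
\tfrac{\pi}{2}T_R\le h(-R)\,\ell(-R)\le n\pi T_R\,.
\]
Now I would use Lemma~\ref{CR-lem} with $t=-R$: it gives $\tfrac{\pi}{2}(1-\mathrm{e}^{-R})\le h(-R)\le \tfrac{\pi}{2}$ and $R\le \ell(-R)\le R+\log 2$, hence
\[
\tfrac{\pi}{2}R(1-\mathrm{e}^{-R})\le h(-R)\,\ell(-R)\le \tfrac{\pi}{2}(R+\log 2)\,.
\]
Combining the two chains of inequalities: from the upper bound on the product and the lower bound $\tfrac{\pi}{2}T_R\le h(-R)\ell(-R)$ we obtain $\tfrac{\pi}{2}T_R\le \tfrac{\pi}{2}(R+\log 2)$, i.e. $T_R\le R+\log 2$; from the lower bound on the product and the upper bound $h(-R)\ell(-R)\le n\pi T_R$ we obtain $\tfrac{\pi}{2}R(1-\mathrm{e}^{-R})\le n\pi T_R$, i.e. $T_R\ge \tfrac{R}{2n}(1-\mathrm{e}^{-R})$. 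This is exactly the claimed two-sided bound.

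There is essentially no obstacle here — the corollary is a direct substitution combining the two preceding lemmas. The only point requiring a moment's care is making sure the identification $h_R(-T_R)=h(-R)$, $\ell_R(-T_R)=\ell(-R)$ is legitimate, which it is because the approximating solution $\Sigma^R_t$ is by definition the mean curvature flow started at time $-T_R$ from the rotation of $\Gamma_{-R}$, so its profile curve at the initial time is precisely $\Gamma_{-R}$ and the notions of horizontal/vertical displacement in \eqref{distances} agree with those of Lemma~\ref{CR-lem} for the Angenent oval. (Strictly, one should also note $T_R$ is finite, which is clear since the enclosed area is positive and decreasing at a definite rate by \eqref{eq:rotationalevolutionarea}, but this is implicit already in speaking of the maximal solution.)
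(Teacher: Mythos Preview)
Your proof is correct and follows exactly the approach indicated in the paper: combine Lemma~\ref{hl-lemma} at $t=-T_R$ with the bounds on $h(-R)$ and $\ell(-R)$ from Lemma~\ref{CR-lem}, using that $h_R(-T_R)=h(-R)$ and $\ell_R(-T_R)=\ell(-R)$ by construction. The paper gives no further details beyond stating this combination, so you have simply filled in the straightforward computation.
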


Next, we bound the minimum of the mean curvature in terms of the horizontal and vertical displacements.
\begin{lemma} \label{Hmin-lemma}
For every $t\in [-T_R,0)$,
\[
H^R_{\min}(t)\le\frac{2nh_R(t)}{\ell_R^2(t)+h_R^2(t)}\,.
\]
\end{lemma}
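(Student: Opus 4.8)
I want an upper bound on $H^R_{\min}(t) = H_R(\tfrac\pi2,t)$, the mean curvature at the pole $P(e_1,t) = (h_R(t),0,\dots,0)$ of $\Sigma^R_t$, in terms of the displacements $h_R$ and $\ell_R$. The natural strategy is to compare with a round sphere: since $\Sigma^R_t$ is convex and contained in the rectangle $[-h_R,h_R]\times[-\ell_R,\ell_R]$ in the $(x,|y|)$-plane, its profile curve $\Gamma^R_t$ encloses a region that contains the parallelogram with vertices $\pm(h_R,0)$, $\pm(0,\ell_R)$. The key geometric fact is that at the vertex $(h_R,0)$ the curve $\Gamma^R_t$, being convex, must curve at least as fast as the largest circle through that point that stays inside the enclosed region --- or, more usefully here, I can bound the two principal curvatures $\kappa_R(\tfrac\pi2,t)$ and $\lambda_R(\tfrac\pi2,t)$ separately and add them, using $H = \kappa + (n-1)\lambda$ and that at the pole $\kappa = \lambda$ (Lemma \ref{lem:lambdalemma}), so $H^R_{\min}(t) = n\,\kappa_R(\tfrac\pi2,t)$.

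**Key steps.** First, I reduce to estimating $\kappa_R(\tfrac\pi2,t)$, the curvature of the profile curve at its rightmost point. Second, I observe that since $\Gamma^R_t$ is convex, reflection-symmetric in both axes, and passes through $(h_R,0)$ and $(0,\ell_R)$, the ellipse $E$ with semi-axes $h_R$ (horizontal) and $\ell_R$ (vertical) is enclosed by $\Gamma^R_t$: indeed the ellipse through these four vertices lies inside the convex hull of any convex curve through the same four points... actually the cleaner statement is that $\Gamma^R_t$ lies \emph{inside} the rectangle, so it lies inside a suitable ellipse. Let me instead use the outer rectangle: the curve is trapped between the inner parallelogram and the outer rectangle. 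The ellipse $x^2/h_R^2 + y^2/\ell_R^2 = 1$ passes through all four vertices $(\pm h_R,0),(0,\pm\ell_R)$ of both the parallelogram and the rectangle, and lies between them; by convexity one expects $\Gamma^R_t$ to lie inside this ellipse near the vertex $(h_R,0)$ where the ellipse is ``flattest''. More robustly, the curvature of a convex curve at a point where the inward normal is $e_1$ and which is confined to the halfplane $\{x \le h_R\}$, passes through $(\pm 0,\pm\ell_R)$... Here is the clean approach: the curvature $\kappa$ at the vertex satisfies $\kappa \le 1/r$ where $r$ is the radius of the largest disk inscribed in the enclosed region and tangent to $\Gamma^R_t$ at $(h_R,0)$ --- NO, that's backwards. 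Convexity gives $\kappa \ge 1/(\text{something})$, not $\le$. So I need the \emph{opposite}: to bound $\kappa$ from above at a vertex I should use that the curve passes through nearby points, i.e., an \emph{interpolation}/secant argument. Concretely, $\kappa_R(\tfrac\pi2,t) \le$ (curvature of the unique circle through $(h_R,0)$, $(0,\ell_R)$, $(0,-\ell_R)$), because that circle — of radius $\tfrac{h_R^2+\ell_R^2}{2h_R}$ — together with the three points forces a convex curve through those points to have curvature at most the circle's curvature at the vertex $(h_R,0)$ (a standard fact: among convex curves through three fixed points, the circular arc maximizes the curvature at the ``outermost'' point). That circle's radius is exactly $\tfrac{h_R^2+\ell_R^2}{2h_R}$, giving $\kappa_R(\tfrac\pi2,t) \le \tfrac{2h_R}{h_R^2+\ell_R^2}$, and multiplying by $n$ gives the claim.

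**Main obstacle.** The delicate point is the comparison principle for curvature at a vertex: I must justify that a convex, reflection-symmetric curve $\Gamma^R_t$ passing through $(h_R,0)$, $(0,\pm\ell_R)$ has $\kappa(\tfrac\pi2,t)$ no larger than that of the circular arc through the same three points. One rigorous way: the curve lies inside the rectangle $[-h_R,h_R]\times[-\ell_R,\ell_R]$; near the vertex $\theta = \tfrac\pi2$ write $\Gamma^R_t$ locally as a graph $x = h_R - g(y)$ with $g(0)=0$, $g'(0)=0$, $g''(0) = \kappa_R(\tfrac\pi2,t)$, and $g$ convex. Since the curve passes through $(0,\pm\ell_R)$ and is convex (so the graph lies below its chords, giving $g(y) \le$ the linear interpolant... again the wrong direction). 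The correct direction: convexity of $g$ gives $g(y) \ge g(0) + g'(0)y = 0$ trivially, and Taylor gives $g(y) \ge \tfrac12 \kappa_R(\tfrac\pi2,t) y^2$ only if $g''$ is monotone, which needs more. The honest route is probably to avoid pointwise curvature comparison entirely and instead integrate: use that $h_R - \langle\gamma_R(\theta,t),e_1\rangle = \int_{\pi/2}^\theta \tfrac{\cos u}{\kappa_R}\,du$ grows, together with the evolution or the precise monotonicity $\partial_\theta\kappa_R>0$ on $(\tfrac\pi2,\pi)$ from Lemma \ref{lem:criticalpoints}, to relate $\kappa_R(\tfrac\pi2,t)$ to the geometry. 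So I would present the proof as: (i) $H^R_{\min} = n\kappa_R(\tfrac\pi2,t)$ at the pole; (ii) an inscribed-circle/graph argument bounding $\kappa_R(\tfrac\pi2,t) \le \tfrac{2h_R}{h_R^2+\ell_R^2}$ using convexity plus the two known points on the curve; with the technical heart being step (ii), which I expect the authors handle via an explicit elementary inequality exploiting reflection symmetry and convexity rather than an abstract comparison theorem.
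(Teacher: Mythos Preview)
Your overall plan matches the paper's: both reduce to comparing $\Sigma^R_t$ at the pole $(h_R,0)$ with the sphere of radius $\rho=\tfrac{h_R^2+\ell_R^2}{2h_R}$ centred on the $x$-axis (equivalently, the profile curve with the circle through $(h_R,0)$ and $(0,\pm\ell_R)$), and the desired bound is precisely $H^R_{\min}\le n/\rho$.

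The genuine gap is in step~(ii). The ``standard fact'' you invoke --- that among convex curves through three given points the circular arc maximizes the curvature at the outermost vertex --- is false. A counterexample: take the boundary of the convex hull of a disk of radius $\epsilon$ centred at $(h_R-\epsilon,0)$ together with the two points $(0,\pm\ell_R)$; this is (after smoothing) a convex curve through $(h_R,0)$ and $(0,\pm\ell_R)$ whose curvature at $(h_R,0)$ equals $1/\epsilon$, hence can be made arbitrarily large. So convexity and three prescribed points cannot by themselves bound the vertex curvature from above, and your Taylor/secant attempts fail for the same underlying reason. Your fallback integral idea, using $\kappa_R(\theta)\ge\kappa_R(\tfrac\pi2)$ on $(\tfrac\pi2,\pi)$ in $h_R=\int_{\pi/2}^{\pi}(-\cos u)/\kappa_R\,du$, only yields $\kappa_R(\tfrac\pi2)\le 1/h_R$, which is far too weak when $\ell_R\gg h_R$.

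The paper supplies exactly the extra ingredient you sensed was needed, but it is the monotonicity of $H_R$ (not just $\kappa_R$) from Lemma~\ref{lem:criticalpoints}, and it is used via a sliding-circle contradiction rather than a direct inequality. Suppose $\Gamma^R_t$ lay locally \emph{inside} the circle $\mathcal C$ of radius $\rho$ near $(h_R,0)$. Since $(0,\ell_R)$ lies strictly outside $\mathcal C$, the curve must cross $\mathcal C$; now translate $\mathcal C$ in the $-e_1$ direction until it first becomes tangent to $\Gamma^R_t$ from outside at some $\theta_1\in(\tfrac\pi2,\pi)$. At that tangency one reads off $\kappa_R(\theta_1,t)\le 1/\rho$, and because the translated circle is still centred on the $x$-axis one also gets $\lambda_R(\theta_1,t)=\langle\nu,e_2\rangle/\langle\gamma,e_2\rangle=1/\rho$ exactly. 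Hence $H_R(\theta_1,t)\le n/\rho\le H_R(\tfrac\pi2,t)$, contradicting the strict monotonicity of $H_R$ on $(\tfrac\pi2,\pi)$. The point to notice is that comparing with a sphere centred on the axis of rotation controls both principal curvatures simultaneously --- this is why the argument works at the level of $H$ rather than $\kappa$.
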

\begin{proof}\mbox{}
Fix any $t\in [-T_R,0)$ and write $h_R=h_R(t)$ and $\ell_R=\ell_R(t)$. Consider, for some $\rho>h_R$, the point $A:=(-(\rho-h_R), 0)$ and the circle $\mathcal C$ centered at $A$ and with radius $\rho$. Then $\gamma_R(\pi,t)=(0,\ell_R)$ lies outside $\mathcal C$ provided that (see Figure \ref{fig3})
\begin{equation}\label{rho}
(\rho-h_R)^2+\ell_R^2>\rho^2\quad\Longleftrightarrow\quad \rho<\frac{\ell_R^2+h_R^2}{2h_R}\,.
\end{equation}
Note that for all $t\in [-T_R,0)$ $\ell_R(t)> h_R(t)$, since, by Lemma \ref{lem:criticalpoints}, $H_R(\tfrac\pi2,t)>H_R(\pi,t)$.
\begin{figure}
\begin{center}
\includegraphics[width=0.7\textwidth]{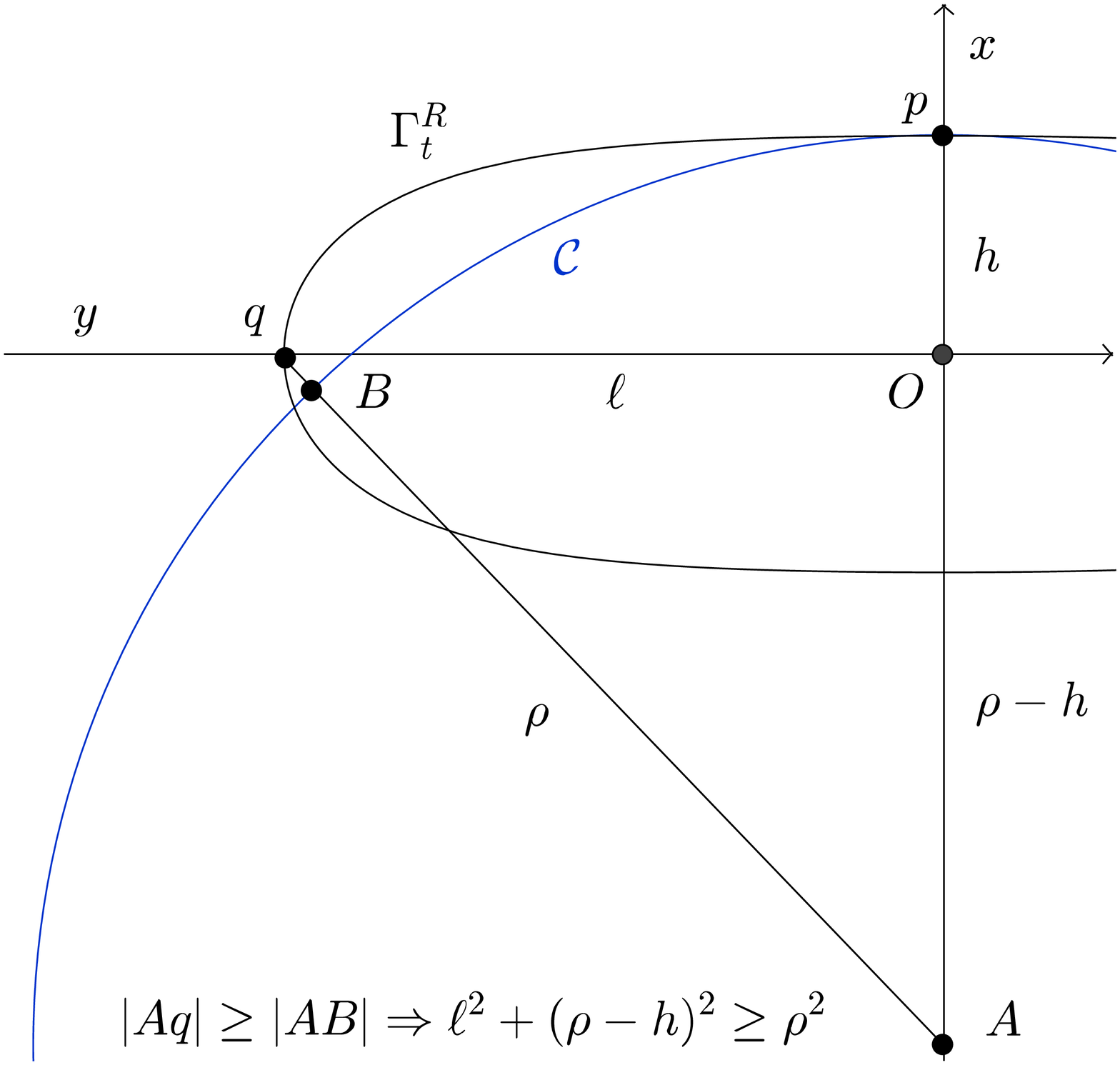}
\end{center}
\caption{Bounding $H_{\mathrm{min}}$.}\label{fig3}
\end{figure}

Consider now  any $\rho>h_R$ satisfying \eqref{rho}. Note that $\gamma_R(\tfrac{\pi}{2},t)=(h_R,0)\in \mathcal C$ and, by the reflection symmetry of $\Gamma^R_t$ along the $x$-axis, $\Gamma^R_t$ and $\mathcal C$ must be tangent at $\gamma_R(\tfrac{\pi}{2},t)=(h_R,0)$. 
\begin{claim}\label{circle-claim}
$\Gamma^R_t$ lies outside of $\mathcal C$ near the point $\gamma_R(\tfrac{\pi}{2},t)=(h_R,0)$. 
\end{claim}
Once we have established Claim \ref{circle-claim} the lemma follows. Indeed, by the rotational symmetry of $\Gamma^R_t$ about the $x$-axis, $\Sigma^R_t$ must lie outside the sphere centered at $A$ of radius $\rho$ around the point $(h_R,0,0)$ and hence, by the maximum principle,
\[
H_R(\tfrac{\pi}{2},t)\le \frac{n}{\rho}\quad \text{for any}\quad\rho<\frac{\ell_R^2+h_R^2}{2h_R}\,.
\]
\begin{proof}[Proof of Claim \ref{circle-claim}]
Suppose the Claim is false. Then, by its reflection symmetry, $\Gamma^R_t$ lies to the \emph{inside} of $\mathcal C$ locally around $(h_R,0)$. By the rotational symmetry of $\Sigma^R_t$ and the maximum principle, we conclude $H_R(\tfrac{\pi}{2},t)\ge 2 \rho^{-1}$. On the other hand, since the point $\gamma_R(\pi,t)$ lies outside of $\mathcal C$, there must exist a point $\theta_0\in(\tfrac{\pi}{2},\pi)$ such that $\gamma_R(\theta_0,t)\in \mathcal C\cap\Gamma^R_t$ and $\gamma_R(\theta,t)$ lies inside $\mathcal C$ for all $\theta\in (\tfrac{\pi}{2},\theta_0)$. We consider now the translations $\mathcal C_\l:=\mathcal C-\l e_1$ of $\mathcal C$ along the $-e_1$ direction. By the preceding arguments, there is certainly some $\l_0>0$ and some $\theta_1\in(\tfrac{\pi}{2},\theta_0)$ such that $\mathcal{C}_{\l_0}$ is tangent to $\Gamma^R_t$ at $\gamma_R(\theta_1,t)$. The maximum principle for the curvature then implies that
\[
\k_R(\theta_1,t)\le \frac1\rho.
\]
Furthermore
\[
\nu_R(\theta_1,t) =\frac{\gamma_R(\theta_1,t)-(A-\l_0 e_1)}{|\gamma_R(\theta_1,t)-(A-\l_0 e_1)|}=\frac{\gamma_R(\theta_1,t)-(A-\l_0 e_1)}{\rho}
\]
and thus
\[
\frac{\langle\nu_R(\theta_1,t),e_2\rangle}{\langle\gamma_R(\theta_1,t),e_2\rangle}=\frac1\rho.
\]
It follows that $H_R(\theta_1,t)\le \frac{n}{\rho}\le H_R(\tfrac{\pi}{2},t)$. Lemma \ref{lem:criticalpoints} now implies that $\theta_1=\tfrac{\pi}{2}$, a contradiction.
\end{proof}
As explained above, this completes the proof of the lemma.
\end{proof}

Lemmas \ref{lem:criticalpoints}, \ref{hl-lemma} and \ref{Hmin-lemma} can now be combined to bound the horizontal displacement $h_R$ from below.
\begin{lemma} \label{h-lemma}
For every $t\in [-T_R,0)$
\[
h_R(t)\ge\frac\pi2(1- \mathrm{e}^{-R}) \mathrm{e}^{\frac{2n}{t}}\,.
\]
\end{lemma}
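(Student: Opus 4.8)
The plan is to derive a differential inequality for $h_R$ and integrate it. By Lemma \ref{lem:criticalpoints}, the minimum of the mean curvature of $\Gamma_t^R$ is attained exactly at the point $\gamma_R(\tfrac{\pi}{2},t) = (h_R(t),0)$, which is the point realizing the horizontal displacement. Since this point moves purely in the $\pm e_1$ direction (by the reflection symmetry in the $x$-axis, the velocity there is normal and parallel to $e_1$), and since the curve is shrinking (convex, so moving inward), we have
\[
\frac{d}{dt}h_R(t) = -H_R(\tfrac{\pi}{2},t) = -H^R_{\min}(t)\,.
\]
(One should be slightly careful that $\theta = \tfrac{\pi}{2}$ remains the minimizing angle for all $t$, which is exactly the content of Lemma \ref{lem:criticalpoints}, so this identity is valid on all of $[-T_R,0)$.)

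Next I would feed in the two estimates already established. Lemma \ref{Hmin-lemma} gives
\[
H^R_{\min}(t) \le \frac{2n\,h_R(t)}{\ell_R^2(t) + h_R^2(t)} \le \frac{2n\,h_R(t)}{h_R^2(t)\ell_R^2(t)}\cdot h_R^2(t)\cdot\frac{1}{h_R^2(t)}\,,
\]
but more simply: since $\ell_R^2 + h_R^2 \ge \ell_R^2 \ge h_R\ell_R$ (using $\ell_R \ge h_R$, noted in the proof of Lemma \ref{Hmin-lemma}), actually the cleanest route is to bound $\ell_R^2 + h_R^2 \ge 2 h_R\ell_R \ge 2\cdot(-\tfrac{\pi}{2}t) = -\pi t$ using the lower bound $h_R\ell_R \ge -\tfrac{\pi}{2}t$ from Lemma \ref{hl-lemma}. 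Combined with $H^R_{\min}(t) \le \dfrac{2n h_R(t)}{\ell_R^2(t)+h_R^2(t)}$ this yields
\[
-\frac{d}{dt}h_R(t) = H^R_{\min}(t) \le \frac{2n\,h_R(t)}{-\pi t}\,,
\]
i.e. $\dfrac{d}{dt}\log h_R(t) \ge \dfrac{2n}{\pi t}$ for $t < 0$.

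Integrating this from $-T_R$ to $t$ (note $\tfrac{2n}{\pi s}$ is negative and integrable on $[-T_R,t]$ since $t<0$) gives
\[
\log h_R(t) - \log h_R(-T_R) \ge \int_{-T_R}^{t}\frac{2n}{\pi s}\,ds = \frac{2n}{\pi}\bigl(\log(-t) - \log T_R\bigr)\,.
\]
Hmm — this produces a power of $(-t)/T_R$ rather than the stated exponential $\mathrm{e}^{2n/t}$, so I suspect the intended computation uses the sharper bound $\ell_R^2 + h_R^2 \ge \ell_R^2$ together with $h_R\ell_R \ge -\tfrac{\pi}{2}t$ giving $\ell_R^2 \ge \tfrac{\pi^2 t^2}{4 h_R^2}$, hence $H^R_{\min} \le \dfrac{2n h_R}{\ell_R^2} \le \dfrac{2n h_R}{\pi^2 t^2/(4h_R^2)} = \dfrac{8n h_R^3}{\pi^2 t^2}$; this is worse. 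The version that gives exactly $\mathrm{e}^{2n/t}$ comes from $-\tfrac{d}{dt}\log h_R \le \dfrac{2n}{t^2}\cdot(\text{const})$, so one should bound $\ell_R^2 + h_R^2 \ge \ell_R^2 \ge (-t)^2$ directly from $\ell_R(t) \ge -t$ — but that last inequality is only the \emph{initial} bound from Lemma \ref{CR-lem}. The correct ingredient must be a lower bound $\ell_R(t) \ge \sqrt{-\pi t/2}\,$ combined with... actually, the clean path is: $H^R_{\min} \le \dfrac{2n h_R}{\ell_R^2+h_R^2} \le \dfrac{2n h_R}{2h_R\ell_R} = \dfrac{n}{\ell_R} \le \dfrac{n}{-t}$ would give $\log$ growth again. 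The \textbf{main obstacle}, and the step I would need to pin down with care, is therefore identifying which combination of Lemmas \ref{hl-lemma}, \ref{Hmin-lemma} and the monotonicity identity produces precisely the factor $\mathrm{e}^{2n/t}$: it must be that $-\tfrac{d}{dt}\log h_R \le \dfrac{2n}{(-t)^2}$ on the relevant range (perhaps via $\ell_R^2 \ge h_R\ell_R \ge \tfrac{\pi}{2}(-t)$ being too weak, one instead uses $\ell_R \ge h_R$ and $\ell_R^2+h_R^2 \ge \ell_R^2$ with $h_R \le \tfrac{\pi}{2}$ bounded, giving $H^R_{\min} \le \dfrac{2n h_R}{\ell_R^2} \le \dfrac{\pi n}{\ell_R^2}$, then $\ell_R^2 \ge \tfrac{\pi^2 t^2}{4 h_R^2} \ge \tfrac{\pi^2 t^2}{4}\cdot\tfrac{4}{\pi^2} = t^2$ using $h_R \le \tfrac\pi2$ in the denominator — yes: $\ell_R \ge \tfrac{-\pi t/2}{h_R} \ge \tfrac{-\pi t/2}{\pi/2} = -t$). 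So $H^R_{\min} \le \dfrac{2n h_R}{\ell_R^2} \le \dfrac{2n h_R}{t^2}$, whence $-\tfrac{d}{dt}\log h_R \le \dfrac{2n}{t^2}$. Integrating from $-T_R$ to $t$:
\[
\log h_R(-T_R) - \log h_R(t) \le \int_{t}^{-T_R}\!\!\Bigl(-\frac{2n}{s^2}\Bigr)ds\Bigr|_{\text{sign fix}} = 2n\Bigl(\frac{1}{t} - \frac{1}{-T_R}\Bigr) \le \frac{2n}{t}\cdot(-1)\cdot(-1)= -\frac{2n}{t}\cdot(-1),
\]
i.e. $\log h_R(t) \ge \log h_R(-T_R) + \dfrac{2n}{t} - \dfrac{2n}{T_R} \ge \log\bigl(\tfrac{\pi}{2}(1-\mathrm{e}^{-T_R})\bigr) + \dfrac{2n}{t}$, and since $T_R \ge \tfrac{R}{2n}(1-\mathrm{e}^{-R})$... one still needs $h_R(-T_R) = h(-R) \ge \tfrac\pi2(1-\mathrm{e}^{-R})$ from Lemma \ref{CR-lem} and to discard the $-\tfrac{2n}{T_R}$ term favorably; exponentiating then yields $h_R(t) \ge \tfrac{\pi}{2}(1-\mathrm{e}^{-R})\mathrm{e}^{2n/t}$ as claimed. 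I would write this up cleanly once the chain $\ell_R \ge -t$, $H^R_{\min} \le 2n h_R/t^2$ is confirmed, as it is the crux.
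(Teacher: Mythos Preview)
Your approach is exactly the paper's: after the exploratory detours you correctly land on the chain
\[
\ell_R(t)\ge \frac{-\tfrac{\pi}{2}t}{h_R(t)}\ge -t\quad\text{(using }h_R\le\tfrac{\pi}{2}\text{)},\qquad
H^R_{\min}(t)\le \frac{2n\,h_R(t)}{\ell_R(t)^2}\le \frac{2n\,h_R(t)}{t^2},
\]
which gives $\frac{d}{dt}(-\log h_R)\le \frac{2n}{t^2}$, and then integrate from $-T_R$ to $t$ using $h_R(-T_R)=h(-R)\ge\tfrac{\pi}{2}(1-\mathrm{e}^{-R})$ from Lemma~\ref{CR-lem}.

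One sign slip to clean up: the integration actually yields
\[
\log h_R(t)\ \ge\ \log h_R(-T_R)+\frac{2n}{t}+\frac{2n}{T_R},
\]
with a \emph{plus} $\tfrac{2n}{T_R}$, not minus. With the wrong sign the term could not be ``discarded favorably'' (you would be dropping a negative term from a lower bound); with the correct sign it is positive and may indeed be dropped, giving $h_R(t)\ge h_R(-T_R)\,\mathrm{e}^{2n/t}\ge\tfrac{\pi}{2}(1-\mathrm{e}^{-R})\,\mathrm{e}^{2n/t}$ as claimed.
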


\begin{proof} Using Lemmas \ref{lem:criticalpoints} and \ref{Hmin-lemma}, we find
\begin{align*}
\frac{d(-h_R(t))}{dt}= H_R(\tfrac{\pi}{2},t)=H^R_{\min}(t)\le{}& \frac{2nh_R(t)}{\ell_R^2(t)+h_R^2(t)}\le\frac{2nh_R(t)}{\ell^2_R(t)}\,.
\end{align*}
On the other hand, Lemma \ref{hl-lemma} yields the estimate
\[
\ell_R(t)\ge \frac{-\pi t}{2h_R(t)}\ge -t
\]
so that
\[
\frac{d}{dt}(-\log (h_R(t)))\le\frac{2n}{t^2}=\frac{d}{dt}\left(\frac{2n}{-t}\right)\,.
\]
Integrating yields
\begin{align*}
-\log(h_R(t))+\log(h_R(-T_R))\le\frac{2n}{-t}-\frac{2n}{T_R}.
\end{align*}
Using Lemma \ref{CR-lem} to bound $h_R(-T_R)=h(-R)$  we obtain
\[
 h_R(t)\ge h_R(-T_R)\exp\left(\frac{2n}{T_R}+\frac{2n}{t}\right)\ge \frac\pi2(1- \mathrm{e}^{-R}) \mathrm{e}^{\frac{2n}{t}}\,.
\]
\end{proof}

Combining Lemmas \ref{hl-lemma} and \ref{h-lemma} yields an estimate for $\ell_R(t)$.
\begin{corollary}\label{cor:lbound}
For every $t\in[-T_R, 0)$,
\[
\ell_R(t)\le -\frac{2n\mathrm{e}^{-\frac{2n}{t}}}{1- \mathrm{e}^{-R}}t\,.
\]
\end{corollary}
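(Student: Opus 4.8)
The plan is to combine the two estimates directly. First I would apply the upper bound from Lemma~\ref{hl-lemma}, which states $h_R(t)\ell_R(t)\le -n\pi t$. Since $\Gamma^R_t$ is a convex curve symmetric about the axes we have $h_R(t)>0$, and since $t<0$ the quantity $-n\pi t$ is positive; dividing through by $h_R(t)$ therefore gives
\[
\ell_R(t)\le\frac{-n\pi t}{h_R(t)}\,.
\]
Next I would substitute the lower bound $h_R(t)\ge\tfrac{\pi}{2}(1-\mathrm{e}^{-R})\mathrm{e}^{\frac{2n}{t}}$ supplied by Lemma~\ref{h-lemma}. Replacing $h_R(t)$ in the denominator by this smaller (still positive) quantity can only enlarge the right-hand side, so
\[
\ell_R(t)\le\frac{-n\pi t}{\tfrac{\pi}{2}(1-\mathrm{e}^{-R})\mathrm{e}^{\frac{2n}{t}}}=-\frac{2n\,\mathrm{e}^{-\frac{2n}{t}}}{1-\mathrm{e}^{-R}}\,t\,,
\]
which is precisely the asserted inequality.

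There is essentially no obstacle here: the argument is a one-line manipulation once Lemmas~\ref{hl-lemma} and~\ref{h-lemma} are in hand. The only point worth flagging is the bookkeeping of signs --- throughout the relevant interval $t<0$, so that $-t>0$, $h_R(t)>0$, and every quantity being divided or compared is positive, which is what guarantees the direction of the inequalities is preserved at each step. (Note also that the companion lower bound $h_R(t)\ell_R(t)\ge-\tfrac{\pi}{2}t$ from Lemma~\ref{hl-lemma} plays no role in this corollary; only the upper bound on the product enters.)
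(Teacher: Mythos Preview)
Your argument is correct and is exactly the approach the paper takes: the paper simply states that the estimate follows by combining Lemmas~\ref{hl-lemma} and~\ref{h-lemma}, and you have spelled out precisely that combination.
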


Using the Harnack inequality, we can now estimate $H_{\max}$.
\begin{lemma}\label{lem:Hbound}
For any $t\ge-\frac {T_R}{2}$,
\[
H^R_{\max}(t)=H_R(\pi,t)\le\frac{2n\sqrt 2\mathrm{e}^{-\frac{2n}{t}}}{1- \mathrm{e}^{-R}}\,.
\]
\end{lemma}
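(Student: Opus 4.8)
The plan is to combine the differential Harnack inequality for mean curvature flow with the area-type bounds already established. By Hamilton's Harnack inequality for convex solutions of mean curvature flow (equivalently, since $\Sigma^R_t$ is convex and ancient-in-the-sense-of-defined-on-$[-T_R,0)$, the inequality $\partial_t H + H/(t+T_R) \geq 0$, which holds because the flow exists on a time interval of length $t+T_R$ before any given time $t$), we obtain for $t \in [-\frac{T_R}{2},0)$ that $(t+T_R)H_R(\cdot,t)$ is monotone. More usefully, for two times $-T_R < t_1 < t_2 < 0$ the trace Harnack inequality gives $H_R(\pi,t_2) \leq \sqrt{\frac{t_2+T_R}{t_1+T_R}}\, H_R(\pi,t_1)$ along the fixed point $\gamma_R(\pi,\cdot)$ on the positive $y$-axis; since this is a critical point of $H_R$ for every $t$ (Lemma \ref{lem:criticalpoints}), the spatial gradient term in the full Harnack expression drops out and only the time factor remains.

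The key steps, in order, are as follows. First I would apply the trace Harnack inequality along the worldline of $\gamma_R(\pi,t)$ between a time $t_1 \in (-T_R,-\frac{T_R}{2}]$ and the target time $t \geq -\frac{T_R}{2}$, obtaining
\[
H_R(\pi,t) \leq \sqrt{\frac{t+T_R}{t_1+T_R}}\,H_R(\pi,t_1)\,.
\]
Second, I would integrate this in $t_1$ over an interval on which I have quantitative control — concretely, average over $t_1 \in [-T_R + \delta, -\frac{T_R}{2}]$ for a suitable $\delta$, or simply choose $t_1 = -\frac{T_R}{2}$ after bounding $H_R(\pi,-\frac{T_R}{2})$ directly. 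To bound $H_R(\pi,t_1)$, recall $-\frac{d}{dt}A_R = \int_0^L H_R\,ds \leq L(t_1)\max H_R$ is the wrong direction; instead I would use that $H_R(\pi,t_1) = -\frac{d}{dt}\ell_R(t_1)$ (as $\gamma_R(\pi,\cdot)$ moves vertically with speed $H_R(\pi,\cdot)$), and integrate: over any subinterval the average of $-\frac{d}{dt}\ell_R$ is $(\ell_R(\text{earlier})-\ell_R(\text{later}))/(\text{length})$, which by Corollary \ref{cor:lbound} is at most $\frac{2n}{1-e^{-R}}\cdot\frac{-t_1\,e^{-2n/t_1}}{(\text{length})}$. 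Choosing the averaging interval to have length comparable to $T_R \sim R$ (valid by Corollary \ref{time}) then yields $H_R(\pi,t_1) \lesssim \frac{2n e^{-2n/t_1}}{1-e^{-R}}$ on average, so such a good $t_1$ exists. Third, plug this into the Harnack bound; for $t \geq -\frac{T_R}{2}$ and $t_1 \in (-T_R,-\frac{T_R}{2}]$ one has $\frac{t+T_R}{t_1+T_R} \leq \frac{T_R}{T_R/2} = 2$, giving the factor $\sqrt{2}$, and $e^{-2n/t_1} \leq e^{-2n/t}$ since $t_1 \leq t < 0$ (as $s\mapsto e^{-2n/s}$ is increasing for $s<0$). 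Assembling the constants produces exactly $H^R_{\max}(t) \leq \frac{2n\sqrt{2}\,e^{-2n/t}}{1-e^{-R}}$.

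The main obstacle I anticipate is making the averaging argument in the second step fully rigorous with the stated sharp constant $2n\sqrt{2}$ rather than an unspecified $C(n)$ — i.e.\ producing a single good time $t_1$ at which both $H_R(\pi,t_1)$ is controlled with the clean constant $2n$ and the Harnack ratio contributes no more than $\sqrt{2}$. The mean value argument gives a good $t_1$ in the interval but only "on average," and one must check that the length of the averaging interval can be taken large enough (using $T_R \geq \frac{R}{2n}(1-e^{-R})$ from Corollary \ref{time}) that the $1/(\text{length})$ loss is absorbed into the $o(1)$ slack implicit in $e^{-2n/t}$ versus $e^{-2n/t_1}$. A cleaner alternative, which I would pursue if the constants resist, is to apply the Harnack inequality with $t_1 \to -T_R$ after first establishing $\liminf_{t_1\to -T_R}(t_1+T_R)H_R(\pi,t_1) = 0$ (true since the initial curve is smooth, so $H$ is bounded near $t = -T_R$), which forces the monotone quantity $(t+T_R)H_R(\pi,t)^2$ — or the appropriate Harnack potential — to be comparable to its integral, again controlled by $\ell_R$ via Corollary \ref{cor:lbound}.
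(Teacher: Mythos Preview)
Your proposal has a fatal sign error in the Harnack inequality. Hamilton's differential Harnack for convex mean curvature flow on $[-T_R,0)$ gives, at the critical point $\theta=\pi$,
\[
\partial_t\big(\sqrt{t+T_R}\,H_R(\pi,t)\big)\geq 0,
\]
so $\sqrt{t+T_R}\,H_R(\pi,t)$ is \emph{non-decreasing}. For $t_1<t_2$ this yields $H_R(\pi,t_2)\geq \sqrt{\tfrac{t_1+T_R}{t_2+T_R}}\,H_R(\pi,t_1)$, i.e.\ an \emph{upper} bound on the earlier value in terms of the later one, not the inequality $H_R(\pi,t_2)\leq \sqrt{\tfrac{t_2+T_R}{t_1+T_R}}\,H_R(\pi,t_1)$ that you write and use throughout. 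The Harnack cannot bound $H$ at a later time by $H$ at an earlier time --- $H$ may increase without restriction --- so your entire scheme of finding a ``good'' earlier $t_1$ and pushing the bound forward collapses. Your bound $\tfrac{t+T_R}{t_1+T_R}\leq 2$ for $t_1\in(-T_R,-\tfrac{T_R}{2}]$ is also false (the ratio blows up as $t_1\to -T_R$), but this is moot once the inequality is reversed.

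The paper's argument runs the Harnack in the correct direction but uses it the opposite way: fix $t$ and apply the lower bound for all \emph{later} times $s\in(t,0)$,
\[
H_R(\pi,s)\geq \sqrt{\tfrac{t+T_R}{s+T_R}}\,H_R(\pi,t),
\]
then integrate in $s$ and use $\ell_R(t)=\int_t^0 H_R(\pi,s)\,ds$ to obtain
\[
\ell_R(t)\geq H_R(\pi,t)\sqrt{t+T_R}\int_t^0\frac{ds}{\sqrt{s+T_R}}\geq H^R_{\max}(t)\,\frac{-t}{\sqrt{2}}\quad\text{for }t\geq -\tfrac{T_R}{2}.
\]
Now the upper bound on $\ell_R(t)$ from Corollary \ref{cor:lbound} converts directly into the stated upper bound on $H^R_{\max}(t)$, with the sharp constant $2n\sqrt{2}$ and no averaging needed.
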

\begin{proof}
By the differential Harnack estimate \cite[Corollary 1.2]{Ham95},
\begin{equation}\label{HI}
H_R(\pi,s)\ge \sqrt{\frac{t+T_R}{s+T_R}}H_R(\pi,t)
\end{equation}
for $-T_R\le t<s<0$. Lemma \ref{lem:criticalpoints} then yields
\[
\begin{split}
\ell_R(t)&=\int_t^0 H_R(\pi,s)ds\ge H_R(\pi,t)\sqrt{t+T_R}\int_t^0\frac{1}{\sqrt{s+T_R}}ds\\
&=H^R_{\max}(t)2\sqrt{t+T_R}\left(\sqrt T_R-\sqrt{T_R+t}\right)\\
&=H^R_{\max}(t)2\sqrt{t+T_R}\frac{-t}{\sqrt T_R+\sqrt{T_R+t}}\\
&\ge H^R_{\max}(t)\sqrt{t+T_R}\frac{-t}{\sqrt T_R}\,.
\end{split}
\]
For $t>-\frac{T_R}{2}$, we obtain
\[
\begin{split}
\ell_R(t)\ge H^R_{\max}(t)\frac{-t}{\sqrt 2}\,.
\end{split}
\]
The claim now follows from Corollary \ref{cor:lbound}.
\end{proof}

Any of various well-known compactness arguments now yield the desired ancient solution (see for example 
\cite[Chapters 3 and 4]{Mantegazza}).
\begin{thm}\label{thm:limit}
There is a sequence of approximating solutions which converges locally uniformly in the smooth topology to a smooth, compact, convex, $O(1)\times O(n)$-invariant ancient solution of mean curvature flow which lies in the slab $\Omega:=\{(x,y,z)\in \R\times\R\times\R^{n-1}:|x|<\frac{\pi}{2}\}$ and in no smaller slab. 
\end{thm}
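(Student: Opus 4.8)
The plan is to extract from the sequence of approximating solutions $\{F_R\}$ a subsequence converging to the desired limit by invoking standard compactness for mean curvature flow, and then to verify the three asserted properties (smooth compact convex $O(1)\times O(n)$-invariant ancient solution; lies in the slab $\Omega$; lies in no smaller slab). The uniform estimates needed to run the compactness machinery have all been assembled in the preceding lemmas, so the argument is mainly a matter of organizing them correctly.

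\textbf{Step 1: Uniform bounds on a common time interval.} Fix any $\tau>0$. By Corollary \ref{time}, $T_R\ge \frac{R}{2n}(1-\mathrm{e}^{-R})\to\infty$, so for all $R$ sufficiently large we have $-\frac{T_R}{2}\le -\tau$, and hence the solutions $\Sigma^R_t$ are all defined on $[-\tau,0)$ for $R$ large. On this interval Lemma \ref{lem:Hbound} gives a uniform bound $H^R_{\max}(t)\le \frac{2n\sqrt 2\,\mathrm{e}^{-2n/t}}{1-\mathrm{e}^{-R}}$, which for $R$ large and $t\le -\tau$ is bounded by a constant $C(\tau)$ depending only on $\tau$ and $n$. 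Since the solutions are convex, $|\mathrm{II}|^2\le H^2\le C(\tau)^2$, so we have a uniform curvature bound. Moreover, by Corollary \ref{cor:lbound} and Lemma \ref{CR-lem}, the $\ell_R(t)$ are uniformly bounded for $t\le-\tau$, and $h_R(t)<\frac\pi2$ always, so the time slices $\Sigma^R_t$ for $t\in[-\tau,0)$ all lie in a fixed compact region of $\R^{n+1}$.

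\textbf{Step 2: Extract a limit.} With uniform curvature bounds and uniform containment in a compact set, Bernstein-type estimates (or the general compactness theory for mean curvature flow, see \cite[Chapters 3 and 4]{Mantegazza}) give uniform bounds on all higher derivatives of the second fundamental form on slightly smaller time intervals, and a diagonal subsequence argument over an exhaustion $\tau=\tau_j\to\infty$ yields a subsequence $R_i\to\infty$ along which $\{\Sigma^{R_i}_t\}$ converges locally uniformly in the smooth topology on $(-\infty,0)$ to a smooth limit flow $\{\Sigma_t\}_{t\in(-\infty,0)}$. (One must choose base points and reparametrize appropriately; alternatively work with the profile curves $\gamma_{R_i}$ and pass to the limit there, which automatically retains the $O(n)$-invariant structure.) The limit is a mean curvature flow, it is ancient (defined on $(-\infty,0)$), each time slice is compact (being contained in a fixed compact set and a smooth limit of compact hypersurfaces), and convexity passes to the limit. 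The $O(1)\times O(n)$-invariance is preserved because each $\Sigma^{R_i}_t$ is $O(1)\times O(n)$-invariant: the $O(n)$-invariance is built in by construction (Section \ref{sec:approximatingsolutions}), and the reflection symmetry in the hyperplane $\{x=0\}$ is preserved by uniqueness of the flow as noted in Section \ref{sec:approximatingsolutions}. Since $h_{R_i}(t)<\frac\pi2$ for every $i$ and $t$, the limit lies in the closed slab $\{|x|\le\frac\pi2\}$; strict convexity of the limit (which holds for $t<0$ by Huisken's theorem applied to the limit, or simply by passing the strict inequalities $\kappa>0$ to the limit and observing the flow cannot become flat) together with compactness actually forces the limit into the open slab $\Omega=\{|x|<\frac\pi2\}$.

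\textbf{Step 3: The limit lies in no smaller slab.} This is where the quantitative lower bound on the horizontal displacement is essential, and I expect it to be the only genuinely non-routine point. By Lemma \ref{h-lemma}, $h_{R_i}(t)\ge \frac\pi2(1-\mathrm{e}^{-R_i})\mathrm{e}^{2n/t}$ for every $i$ and every $t\in[-T_{R_i},0)$. Fixing $t<0$ and letting $i\to\infty$, the factor $(1-\mathrm{e}^{-R_i})\to1$, so in the limit $h(t):=\max_{\theta}|\langle\gamma(\theta,t),e_1\rangle|\ge \frac\pi2\mathrm{e}^{2n/t}>0$. Thus for each $t$ the limit time slice $\Sigma_t$ genuinely reaches width $2h(t)$ in the $e_1$-direction, and since $h(t)\to\frac\pi2$ as $t\to-\infty$ (again from $h(t)\le\frac\pi2$ together with the lower bound $\frac\pi2\mathrm{e}^{2n/t}$), no slab $\{|x\cdot e|<\alpha\}$ with $\alpha<\frac\pi2$ can contain the whole flow: if the flow lay in such a slab, then by convexity and the fact that it converges to a round point as $t\to0$ (or simply because each $\Sigma_t$ is a connected compact convex $O(n)$-invariant hypersurface) one checks the slab direction $e$ would have to be $\pm e_1$, forcing $h(t)<\alpha<\frac\pi2$ for all $t$, contradicting $\sup_t h(t)=\frac\pi2$. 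The main obstacle is therefore not any single estimate but ensuring the subsequential limit is taken in a way compatible with all the symmetries and with the lower bound $h_{R_i}\ge\frac\pi2(1-\mathrm{e}^{-R_i})\mathrm{e}^{2n/t}$ surviving the limit; working at the level of the profile curves $\gamma_{R_i}(\cdot,t)$ and their turning-angle parametrizations makes this transparent, since the points $\gamma_{R_i}(\frac\pi2,t)$ converge to $\gamma(\frac\pi2,t)$ with $|\langle\gamma(\frac\pi2,t),e_1\rangle|\ge\frac\pi2\mathrm{e}^{2n/t}$.
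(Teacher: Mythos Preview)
Your proposal is correct and follows essentially the same route as the paper: uniform time-of-existence from Corollary~\ref{time}, uniform curvature bounds from Lemma~\ref{lem:Hbound} (together with $|\mathrm{II}|^2\le H^2$), displacement bounds from Corollary~\ref{cor:lbound}, then standard compactness and a diagonal argument; the symmetries and slab containment pass to the limit, strict convexity comes from the strong maximum principle, and Lemma~\ref{h-lemma} rules out any smaller slab. Your Step~3 is slightly more explicit than the paper in arguing why the slab direction must be $\pm e_1$, but this is a cosmetic difference.
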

\begin{proof}
Let $\{R_i\}_{i\in\N}$ be a sequence of positive numbers which go to infinity as $i$ goes to infinity and denote by $F_i:=F_{R_i}:S^n\times[-T_i,0)\to\R^{n+1}$ the corresponding approximating solution, where $T_i:=T_{R_i}$. 
By Corollary \ref{time}, $T_i$ goes to infinity as $i$ goes to infinity. Since any convex hypersurface satisfies $|\mathrm{II}|^2\leq H^2$, Lemma \ref{lem:Hbound} yields a bound for the curvature $|\mathrm{II}_i|$ of $F_i$ on any compact subset $K\subset S^n\times (-\infty,0)$ uniform in $i$. The estimates of \cite{EcHu91} then provide, for every $k\in\N$ and $m\in \N$, bounds for $|\nabla^k_t\nabla^m\mathrm{II}_i|$ on $K$, uniformly in $i$. Combined with a uniform bound for the displacement $|F_i|$ coming from Corollary \ref{cor:lbound}, these estimates can be translated into uniform bounds for the derivatives $|\partial^k_t\partial^m_\theta\partial^n_{\varphi^j} F_i|$ on $K$. By the Arzel\`a--Ascoli Theorem and a diagonal subsequence argument, a subsequence of the flows $F_i$ converge locally uniformly in $C^\infty$ to an ancient solution $F_\infty:S^n\times(-\infty,0)\to\R^{n+1}$. The limit is certainly weakly convex, $\mathrm{II}_\infty\geq 0$, since this is the case for each $F_i$. Strict convexity then follows from the strong maximum principle for the second fundamental form and compactness of the limit. The limit is $O(1)\times O(n)$-invariant and lies in the slab $\Omega$ for all $t\in(-\infty,0)$ since this is the case for each of the approximating solutions. By Lemma \ref{h-lemma}, the limit cannot lie in any smaller slab. 
\end{proof}

\section{Unique asymptotics}\label{sec:uniqueasymptotics}

We now want to study $O(n)$-invariant ancient solutions lying in a slab more generally. Our ultimate goal is to show that the solution constructed in Theorem \ref{thm:limit} is the unique such solution. As a first step, we will show that all such solutions have the correct asymptotics. Indeed, well-known arguments show that the `parabolic' region is asymptotic to the boundary of the slab (Lemma \ref{lem:asymptoticsmiddle}) while the `edge' region is asymptotic to a Grim hyperplane (Lemma \ref{lem:grimconvergence}). By carefully estimating the area enclosed by the profile curve (Lemma \ref{lem:areaupperbound1}), we are able to show that this Grim hyperplane must have the same width as the slab (Corollary \ref{cor:asymptoticsedge}). This is quite a strong conclusion as it provides an asymptotic description of the solution all the way up to the parabolic region. This makes the treatment of the `intermediate' region in our case quite different from the one required for the ancient solutions constructed in \cite{HaHe} (see \cite{ADS}). 

So consider any convex ancient solution $F:S^n\times(-\infty,0)\to\R^{n+1}$ of mean curvature flow that is $O(n)$-invariant with respect to some $e_1\in S^n$ and lies in the slab $\Omega:=\{(x,y,z)\in \R\times\R\times\R^{n-1}:\vert x\vert<\tfrac{\pi}{2}\}$ (and in no smaller slab). We write $\Sigma_t=\Sigma^n_t:=F(S^n,t)$ and, given some $e_2\in  S^{n-1}\subset \{e_1\}^\perp$, parametrize the profile curve $\Gamma_t:=\Sigma^n_t\cap \E^2$ with respect to turning angle by a curve $\gamma:S^1\times(-\infty,0)\to \E^2$, where $\E^2:=\spa\{e_1,e_2\}\cong \{(x,y,z)\in \R\times\R\times\R^{n-2}:z=0\}$. As usual, we denote by $\tau$ and $\nu$ the unit tangent vector and outward pointing unit normal and by $\k(\cdot, t)$ and $\l(\cdot,t)$ the curvature and rotational curvature, so that $H(\cdot, t)=\k(\cdot, t)+(n-1)\l(\cdot, t)$ is the mean curvature of $F$.

Finally, for each $t<0$ we define the vertical displacement
\[
\ell(t):=\max_{\theta\in S^1}|\la\gamma(\theta, t), e_2\ra|\,.
\]
Since $\Gamma_t$ is convex and rotationally symmetric,
\[
\ell (t)=\la\gamma(\pi,t), e_2\ra=-\la\gamma(0, t), e_2\ra\,.
\] 
We note that, unlike the situation in the previous section, the curve $\Gamma_t$ is not, a priori, reflection symmetric in the $y$-axis.

We first show that the parabolic region looks like the boundary of the slab as $t\to-\infty$.
\begin{lemma}\label{lem:asymptoticsmiddle}
For any sequence of times $t_i\to-\infty$ the sequence of mean curvature flows $\Sigma_t^i$ defined for $t\in (-\infty,-t_i)$ by $\Sigma^i_t:=\Sigma_{t+t_i}$ converges locally uniformly in the smooth topology to the stationary mean curvature flow defined by $\partial \Omega$.
\end{lemma}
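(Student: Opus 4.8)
The plan is to extract convergence to \emph{some} limiting ancient solution that is $O(n)$-invariant, lies in the slab $\Omega$, and is invariant under translation in the $e_2$-direction near the middle; the only such solution turns out to be the static pair of hyperplanes $\partial\Omega$. The first step is a curvature bound on compact time intervals: since the $\Sigma_t$ are convex and lie in the slab of width $\pi$, Lemma \ref{Hmin-lemma}-type reasoning (or directly comparing to shrinking spheres/cylinders) gives $H\le C(t)$ on $\Sigma_t$, uniformly as we translate time by $t_i\to-\infty$ and restrict to $t$ in a fixed compact subset of $(-\infty,0)$. (More precisely, for the shifted flows $\Sigma^i_t$, the bound $\ell(t)\to\infty$ as $t\to-\infty$ forces $H^i_{\min}\to 0$, but here we only need a one-sided bound $|\mathrm{II}^i|\le H^i\le C_K$ on compact $K$, which follows from convexity together with the fact that a convex hypersurface in a slab of width $\pi$ is trapped between a hyperplane and a Grim, bounding its mean curvature on any fixed past-time-interval once we know the flow has been running long enough — i.e., from Corollary \ref{time}-style existence times going to infinity.) Given this, the interior estimates of Ecker--Huisken \cite{EcHu91}, combined with the displacement bound from the slab constraint, give uniform $C^\infty_{\loc}$ bounds on the shifted flows, so by Arzelà--Ascoli and diagonalization a subsequence $\Sigma^{i_k}_t$ converges locally smoothly to a limit ancient solution $\Sigma^\infty_t$, which is weakly convex, $O(n)$-invariant, and contained in $\overline\Omega$.

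The second step is to show that the limit is flat, i.e. $\Sigma^\infty_t=\partial\Omega$ for every $t$. Since the limit lies in the closed slab $\overline\Omega=\{|x|\le\frac\pi2\}$, the two boundary components of the slab are static planar solutions which the limit may not cross; by the strong maximum principle (applied to the MCF of the convex pieces against the static planes), either $\Sigma^\infty_t$ is disjoint from $\partial\Omega$ for all time, or it coincides with $\partial\Omega$. Suppose the former. Then $\Sigma^\infty_t$ is a convex ancient solution lying in the \emph{open} slab. Now I invoke the vertical displacement: by Lemma \ref{hl-lemma} (or its analogue for the limit solution — note $h\ell \le -n\pi t$ with $h<\frac\pi2$) we get $\ell^\infty(t)\ge -2t$, so the limit is noncompact as $t\to-\infty$... but more to the point, the shifted solutions have $\ell(t+t_i)\to\infty$ locally uniformly in $t$ as $i\to\infty$ (since $\ell(s)\ge -s$ by the lower area bound $A\ge-2\pi s$ from \eqref{area-t estimate} and \eqref{area-hl estimate}), hence for fixed $t$ the profile curves $\gamma^i(\cdot,t)$ have vertical extent tending to infinity. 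Passing to the limit, the profile curve $\Gamma^\infty_t$ is a complete convex curve in the open strip $\{|x|<\frac\pi2\}$ with infinite vertical extent in both directions; a complete convex curve trapped in a strip, with infinite length, must have its two ends asymptotic to vertical lines $x=a^\pm$ with $|a^\pm|\le\frac\pi2$. Translating vertically (i.e. using the point $P(\varphi,t_i)$ as a basepoint, or simply recentering by $\ell$), and using that the normal speed $H=\kappa+(n-1)\lambda$ with $\lambda=-\cos\theta/y\to 0$ as $y\to\infty$, one sees the rescaled/recentered limit is a translating solution: a complete convex curve-shortening translator in a strip is, by Angenent's classification, a Grim Reaper. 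But a Grim Reaper of width $<\pi$ would confine $\Sigma^\infty_t$ to a strip strictly narrower than $\Omega$ near the middle while the full solution sits in $\overline\Omega$ — this is consistent, so instead I argue directly: in the limit the profile curve, being convex, complete, and contained in $\overline\Omega$, with $\lambda\to0$, satisfies $\kappa_t\to\kappa^2(\kappa_{\theta\theta}+\kappa)$, and the only complete convex ancient solutions of curve shortening in a closed slab of width $\pi$ and no smaller are $\partial\Omega$ or the Angenent oval or Grim Reapers; among these the only one that is \emph{static} — which the middle must be, since after shifting by $t_i\to-\infty$ any non-static behavior would contradict the monotonicity/Harnack forcing $H\to0$ — is $\partial\Omega$.

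Let me restructure that last step more cleanly, since it is the crux. The cleanest route: the lower bound $\ell(t)\ge -t$ on the original solution means that for the shifted flows, $\ell^i(t)=\ell(t+t_i)\to +\infty$ for each fixed $t$. Meanwhile $h^i(t)=h(t+t_i)<\frac\pi2$ and by Lemma \ref{h-lemma}-type arguments (or directly, since the original solution lies in no smaller slab) $h(t+t_i)\to\frac\pi2$. Hence in the limit the profile curve $\Gamma^\infty_t$ reaches $x=\pm\frac\pi2$ (its convex hull's width is exactly $\pi$) and has infinite vertical extent. A complete convex curve in the \emph{closed} slab $\{|x|\le\frac\pi2\}$ whose convex hull has width exactly $\pi$ must contain entire vertical segments of the lines $x=\pm\frac\pi2$ or be asymptotic to them; being a smooth solution of CSF with bounded curvature (from our estimates) and $\lambda\to 0$, the portion near $x=\frac\pi2$ has curvature $\kappa\to 0$ and normal speed $\to 0$, forcing it to be exactly the line $x=\frac\pi2$ by the strong maximum principle, and similarly for $x=-\frac\pi2$; connectedness and convexity then force $\Gamma^\infty_t=\{x=\frac\pi2\}\cup\{x=-\frac\pi2\}$, i.e. $\Sigma^\infty_t=\partial\Omega$. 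Since the limit is uniquely determined, the whole sequence (not just a subsequence) converges, giving the statement. \textbf{The main obstacle} I anticipate is making rigorous the passage from ``profile curve of infinite vertical extent trapped in the closed slab'' to ``equals the two boundary lines'' — one must rule out, e.g., the curve hugging $x=\frac\pi2$ along one end and $x=-\frac\pi2$ along the other (a single Grim-Reaper-like component rather than two lines); this is handled by observing that such a configuration has a point with outward normal $\pm e_2$ where $y$ stays bounded, contradicting $\ell^\infty=\infty$, together with the curvature bound forcing the ends to be genuinely flat rather than merely asymptotically flat.
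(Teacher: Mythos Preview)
Your overall strategy --- extract a subsequential limit and then classify it --- is workable, but the paper's proof is far more direct, and your execution has a real gap in the curvature bound.

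\textbf{The curvature bound.} You never correctly justify $H^i\le C_K$ on compact time intervals. Lemma~\ref{Hmin-lemma} bounds $H_{\min}$, not $H_{\max}$; ``trapped between a hyperplane and a Grim'' does not bound curvature; and Corollary~\ref{time} concerns the approximating solutions $F_R$, not a general ancient solution in the slab. The paper's one-line fix, which you miss entirely, is the differential Harnack inequality: for a convex \emph{ancient} solution, $\partial_t H\ge 0$ in the turning-angle parametrization, so $H(\theta,t+t_i)\le H(\theta,t_0)$ for any fixed $t_0$ once $t+t_i<t_0$. This immediately gives uniform curvature bounds on the shifted flows, and then Ecker--Huisken upgrades to $C^\infty_{\loc}$.

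\textbf{The approach.} The paper does not extract a limit and then classify it. Instead it establishes Hausdorff convergence to $\partial\Omega$ \emph{first}, by elementary convex geometry: the ``no smaller slab'' hypothesis together with the avoidance principle forces $h(t+t_i)\to\frac\pi2$ (your observation here is correct), and comparison with enclosing spheres forces $\ell(t+t_i)\to\infty$. A convex body in $\overline\Omega$ with width approaching $\pi$ and vertical extent going to infinity must, on any fixed compact set, Hausdorff-converge to the two hyperplanes --- no classification of ancient solutions, no strong maximum principle against $\partial\Omega$, no discussion of Grim Reapers or Angenent ovals is needed. Smooth convergence then follows from the curvature bound above. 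Your ``cleaner'' restructuring at the end is groping toward exactly this, but you bury it under unnecessary PDE machinery and never quite land on the simple convexity argument.

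\textbf{Minor.} Several of the lemmas you cite (\ref{hl-lemma}, \ref{h-lemma}, \eqref{area-t estimate}, \eqref{area-hl estimate}) are stated for the approximating solutions; the underlying inequalities (e.g.\ $A(t)\ge -2\pi t$ from \eqref{eq:rotationalevolutionarea}, hence $\ell(t)\ge -t$) do carry over to a general $O(n)$-invariant ancient solution, but you should say so rather than cite the $R$-indexed statements.
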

\begin{proof}
Fix any sequence of times $t_i\to -\infty$ and any $t<0$. Since $\Omega$ is the smallest slab containing the solution, there exists, by the avoidance principle, sequences of points $\overline x_i$ and $\underline x_i\in \Sigma_{t+t_i}$ satisfying $\left\langle \overline x_i,e_1\right\rangle\to \frac{\pi}{2}$ and $\left\langle \underline x_i,e_1\right\rangle\to -\frac{\pi}{2}$ as $i\to\infty$. On the other hand, by the $O(n)$-invariance of the solution and the avoidance principle, there exists a sequence of points $y_i\in \Sigma_{t+t_i}$ such that $\left\langle y_i,e_2\right\rangle\to \infty$. By convexity and $O(n)$-invariance of $\Sigma_t$, we conclude that $\Sigma^i_{t}$ converges locally uniformly in the Hausdorff topology to $\partial \Omega$ as $i\to \infty$. That the convergence is in the smooth topology follows as in Theorem \ref{thm:limit} since, by the Harnack inequality, the mean curvature is monotone non-decreasing in $t$.
\end{proof}

We next show that the `edge' region looks like a Grim hyperplane of width $\alpha\pi$ as $t\to-\infty$, where $\alpha\in (0,1]$ is defined by
\begin{equation}\label{eq:alpha}
\alpha^{-1}:=\lim_{t\to-\infty} H(\pi,t)\,.
\end{equation}
Note that the limit exists since, by the Harnack inequality, $H$ is non-decreasing in $t$. We denote the inverse of the Gauss map of $\Sigma_t$ by $P$; that is, given $v\in S^{n}$, $P(v,t)$ is the point of  $\Sigma_t$ with unit outward normal $v$.

\begin{lemma}\label{lem:grimconvergence}
For any sequence of times $t_i\to-\infty$ and any unit vector $\varphi\in \{e_1\}^\perp$ the sequence of mean curvature flows $\{\Sigma^i_t\}_{t\in (-\infty,-t_i)}$ defined by $\Sigma^i_t:=\Sigma_{t+t_i}-P(\varphi,t_i)$ converges locally uniformly in the smooth topology to the scaled Grim hyperplane $\alpha \mathrm{G}^n_{\alpha^{-2}t}$, where $\mathrm{G}_t$ is the Grim hyperplane which translates in the direction $\varphi$ with unit speed; i.e.
\begin{equation}\label{eq:grimplane}
\mathrm{G}^n_t:=\{\theta e_1+(t-\log\cos\theta)\varphi\}\,,\quad t\in(-\infty,\infty)\,.
\end{equation}
\end{lemma}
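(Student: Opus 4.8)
The plan is to prove this \emph{subconvergence} statement along any sequence $t_i\to-\infty$ by combining a priori curvature control with the structure of rotationally invariant translating solitons.

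\medskip

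\textbf{Step 1: Uniform curvature bounds near the edge.} First I would establish that, after translating the edge point $P(\varphi,t_i)$ to the origin, the flows $\{\Sigma^i_t\}$ have locally uniformly bounded curvature. By the Harnack inequality, $H(\pi,t)$ is non-decreasing in $t$, so $H(\pi,t)\le \alpha^{-1}$ for all $t<0$; by Lemma \ref{lem:criticalpoints} (which here must be replaced by the corresponding statement for general $O(n)$-invariant ancient solutions --- the profile curve is still convex and rotationally symmetric, so the vertex at $\theta=\pi$ is still the maximum of $H$ along the profile curve), we actually have $H\le \alpha^{-1}$ at the edge. More carefully, one uses that the edge point $P(\varphi,t)$ has maximal displacement $\ell(t)$ from the axis; the normal at that point is $\varphi$, and the curvature there is $H(\pi,t)$, controlled by $\alpha^{-1}$. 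Since $|\mathrm{II}|^2\le H^2$ by convexity, one obtains a uniform bound on $|\mathrm{II}_i|$ on compact subsets of spacetime near the origin; the estimates of \cite{EcHu91}, as in the proof of Theorem \ref{thm:limit}, then give uniform bounds on all derivatives, and Arzel\`a--Ascoli produces a smooth limit flow $\{\Sigma^\infty_t\}$ through a subsequence.

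\medskip

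\textbf{Step 2: The limit is a weakly convex, eternal, $O(n-1)$-invariant solution.} The limit is defined for all $t\in(-\infty,\infty)$ since $-t_i\to\infty$; it is weakly convex (as a limit of convex hypersurfaces) and inherits $O(n)$-invariance about $e_1$, but now the profile curve is no longer closed --- in the limit the vertex at $\theta=0$ (the point with normal $-\varphi$) has escaped to infinity along the $e_1$-axis, because $\ell(t+t_i)-\ell(t_i)$ behaves like the integral of $H(\pi,\cdot)$, which converges to $\alpha^{-1}$, so the two ``tips'' separate unboundedly while the \emph{near} tip stays at the origin. Hence the limit is a convex, $O(n)$-invariant eternal solution which is a graph over (a neighbourhood of the origin in) the $e_1$-axis, with $H>0$ (strict, by the strong maximum principle, since $H$ is not identically zero: $H(\pi,t_i)\to\alpha^{-1}>0$). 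By Hamilton's Harnack inequality, an eternal convex solution with $H$ attaining a spatial-temporal maximum somewhere is a translating soliton; here $H\le\alpha^{-1}$ everywhere and $H\to\alpha^{-1}$ along the sequence, so the limit $H$ attains the value $\alpha^{-1}$, forcing the Harnack quantity to vanish and the limit to be a translating soliton moving in the direction $\varphi$ with speed $\alpha^{-1}$.

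\medskip

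\textbf{Step 3: Classification of the soliton and normalization.} A convex, $O(n)$-invariant translating soliton in $\R^{n+1}$ which is a graph over a line in its direction of translation and has bounded curvature: the only such solitons are the Grim hyperplanes $\mathrm{G}^n$ (product of the $1$-dimensional Grim Reaper with $\R^{n-1}$), together with their homothetic rescalings --- the bowl soliton is rotationally symmetric about its \emph{axis of translation} and is not a graph in a slab direction, so it is excluded, as are the $n$-dimensional ``Grim'' solitons of other dimensions by the $O(n)$-invariance about $e_1$. The unique Grim hyperplane translating in direction $\varphi$ with speed $\alpha^{-1}$ and with its ``tip'' at the origin at $t=0$ is precisely $\alpha\mathrm{G}^n_{\alpha^{-2}t}$ with $\mathrm{G}^n_t$ as in \eqref{eq:grimplane}: indeed the Grim Reaper $\{\theta e_1+(t-\log\cos\theta)\varphi\}$ translates with unit speed and has maximal curvature $1$ at $\theta=0$, so scaling the space by $\alpha$ and time by $\alpha^2$ gives speed $\alpha^{-1}$ and maximal curvature $\alpha^{-1}$, matching the limit. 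The choice of basepoint $P(\varphi,t_i)$ pins down the spatial translation so that the tip sits at the origin, and the fact that $P(\varphi,t_i)$ has normal $\varphi$ aligns the axis of the Grim hyperplane with $e_1$ (and the direction of translation with $\varphi$); this identifies the limit exactly as claimed.

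\medskip

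\textbf{Main obstacle.} The delicate point is Step 2: verifying that the \emph{entire} limit profile curve really is a noncompact convex graph with $H\to\alpha^{-1}$ attained in the limit --- i.e. that no curvature concentrates and that the far tip genuinely escapes --- and then that the Harnack inequality can be applied to conclude the limit is a \emph{translator} rather than merely an eternal solution. Making the application of Hamilton's rigidity case of the Harnack inequality rigorous (one needs $H$ of the \emph{limit} to attain its supremum at an interior spacetime point, which follows because $H\le\alpha^{-1}$ globally while $H(\pi,0)=\lim_i H(\pi,t_i)=\alpha^{-1}$ in the limit, but this requires care with the convergence) is the technical heart of the argument. Everything else is either the compactness machinery already used in Theorem \ref{thm:limit} or the (known) classification of rotationally symmetric translators.
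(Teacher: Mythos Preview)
Your route differs from the paper's and contains a real gap.

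The paper does \emph{not} argue via Hamilton's rigidity for the full $n$-dimensional flow. Instead it observes that $\lambda(0,t)=1/\ell(t)\to 0$ as $t\to-\infty$, so on any subsequential limit the smallest principal curvature vanishes somewhere; the strong maximum principle for the second fundamental form then forces the limit to split off a Euclidean factor $\R^{n-1}$. The limit is therefore a product $\Gamma^\infty_t\times\R^{n-1}$ with $\Gamma^\infty_t$ a convex curve evolving by curve shortening flow. Since $H(\theta,\cdot)$ is monotone in $t$ (Harnack), the backward limit along $t_i\to-\infty$ has curvature independent of $t$ in the turning-angle parametrization, and \cite[Theorem~8.1]{Ham95} identifies the profile as a translating Grim Reaper. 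The scale is then fixed by $\lim_{t\to-\infty}H(\pi,t)=\alpha^{-1}$, which also gives full (not just subsequential) convergence.

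Your Step~2, by contrast, needs $H\le\alpha^{-1}$ everywhere on the limit so that $H$ attains a spacetime maximum and Hamilton's rigidity applies. You obtain this from the assertion that $H(\pi,t)=\max_\theta H(\theta,t)$ on the original flow, invoking Lemma~\ref{lem:criticalpoints}. But that lemma is proved only for the \emph{approximating} solutions, by a maximum-principle argument that uses the initial data in an essential way; Section~\ref{sec:uniqueasymptotics}, where the present lemma lives, treats an \emph{arbitrary} convex $O(n)$-invariant ancient slab solution, and no analogue of Lemma~\ref{lem:criticalpoints} is available. The justification you offer in parentheses (``the profile curve is still convex and rotationally symmetric, so the vertex at $\theta=\pi$ is still the maximum of $H$'') is not an argument: convexity and reflection symmetry of the profile do not force the maximum of $H=\kappa+(n-1)\lambda$ to sit at $\theta=\pi$. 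Indeed, the paper never establishes this for general solutions; it only proves the \emph{minimum} lies at $\theta=\pm\frac{\pi}{2}$ (see \eqref{eq:Hminatpole}). Without the global bound $H\le\alpha^{-1}$ your Harnack-rigidity step does not go through.

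There is a second issue. The translated limit does \emph{not} ``inherit $O(n)$-invariance about $e_1$'': the axis of rotation of $\Sigma^i_t$ passes through $-P(\varphi,t_i)$ and escapes to infinity in the $-\varphi$ direction as $i\to\infty$. This is precisely why the limit splits off $\R^{n-1}$ rather than remaining rotationally symmetric. Your translator classification in Step~3 is therefore carried out in the wrong symmetry class; the relevant object to classify is a convex eternal solution of \emph{curve shortening flow} in $\mathrm{span}\{e_1,\varphi\}$, not an $O(n)$-invariant translating hypersurface.
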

\begin{proof}
That a subsequence of the flows converges locally uniformly in $C^\infty$ to a weakly convex eternal limit mean curvature flow contained in the slab $\Omega$ follows similarly as in Theorem \ref{thm:limit}. Now observe that, by the $O(n)$-invariance, the solution $\Sigma_t$ satisfies
\begin{equation*}
\lambda(0,t)=\frac{1}{\ell(t)}\,,
\end{equation*}
which tends to $0$ as $t$ tends to $-\infty$ because $\Sigma_t$ cannot lie, for all $t<0$, in any compact subset of $\R^{n+1}$. By the strong maximum principle, the limit of the sequence of flows $\{\Sigma^i_t\}_{t\in(-\infty,0)}$ must therefore split off an $(n-1)$-plane (see, for example, \cite[Proposition 4.2.7]{Mantegazza}). Moreover, the limit of the profile curves must satisfy curve shortening flow. Since the curvature of the limit is constant in time with respect to the turning angle parametrization, \cite[Theorem 8.1]{Ham95} shows that the profile curve moves by translation and we conclude that the limit is a Grim hyperplane. Since $\lim_{t\to-\infty} H(\pi,t)=\alpha^{-1}$ exists, as defined in equation~\eqref{eq:alpha}, then the limit flow is the Grim hyperplane $\alpha \mathrm{G}^n_{\alpha^{-2}t}$, independently of the chosen subsequence. Finally, since the limit lies in the slab $\Omega$, $\alpha$ cannot be greater than $1$.
\end{proof}

The remainder of this section is devoted to showing that $\a$ must be equal to 1. We will need the following lemma.

\begin{lemma}\label{lem:k>l}
For all $t<0$
\[
\l(\cdot,t)\le\min\left\{\k(\cdot,t), \frac{\a}{-t}\right\}\,.
\]
\end{lemma}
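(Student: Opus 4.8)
The plan is to establish the two bounds separately. For the first bound, $\lambda \le \kappa$, I would argue exactly as in the proof of Lemma \ref{lem:criticalpoints}: consider $u := \kappa/\lambda$. By Lemma \ref{lem:lambdalemma}, $u = 1$ at the poles $\theta = \pm\tfrac{\pi}{2}$ for all time. The difficulty here, compared with Section \ref{sec:rotatingtheovals}, is that we no longer have reflection symmetry in the $y$-axis, so there is no \emph{a priori} control of $u$ at an initial time; instead I would exploit that the solution is \emph{ancient}. Since $u$ satisfies equation \eqref{eq:rotationalevolutionu}, at an interior spatial minimum of $u$ one has $0 \ge u_t - \kappa^2 u_{\theta\theta} = -2\tan^2\theta\, H(\kappa - \lambda)$, forcing $\kappa \ge \lambda$ there. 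Applying this on the parabolic cylinder $S^1 \times (-\infty, t_0]$ and using the fact that $u \to 1$ uniformly as $t \to -\infty$ (which follows from Lemma \ref{lem:asymptoticsmiddle} and Lemma \ref{lem:grimconvergence}: away from the poles the solution converges to a slab or a Grim hyperplane, on which $\lambda = 0$, while near the poles $u \to 1$ by smoothness), the minimum of $u$ over the closed cylinder is attained either at the poles, where $u = 1$, or in the limit $t \to -\infty$, where $u \to 1$; in either case $u \ge 1$ everywhere, i.e. $\lambda \le \kappa$.

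For the second bound, $\lambda(\cdot,t) \le \alpha/(-t)$, the key observation is that $\lambda(0,t) = 1/\ell(t)$ is the \emph{maximum} of $\lambda$ over $S^1$, which I would verify using formula \eqref{eq:Dlambda}: since $\kappa \ge \lambda$ (just proved) and $\kappa\lambda_\theta = \tfrac{\sin\theta}{y}(\kappa - \lambda)$, the sign of $\lambda_\theta$ matches that of $\sin\theta/y$, so $\lambda$ is monotone on each of $(0,\pi)$ and $(\pi, 2\pi)$ with its maximum at $\theta = 0$ (equivalently $\theta = 2\pi$, the point farthest along the $-e_2$ axis, where $y$ is most negative) — wait, one must be careful: $y(0) < 0$ and $\cos 0 = 1 > 0$, so $\lambda(0) = -\cos 0/y(0) > 0$, and checking the monotonicity shows $\theta = 0$ gives the maximal value $1/\ell(t)$. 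Granting this, it suffices to show $\ell(t) \ge (-t)/\alpha$, or equivalently $\liminf_{t\to -\infty}\ell(t)/(-t) \ge 1/\alpha$ together with a monotonicity-type comparison. Here I would use the Grim hyperplane asymptotics of Lemma \ref{lem:grimconvergence}: the rescaled flows $\Sigma_{t+t_i} - P(\varphi, t_i)$ converge to $\alpha \mathrm{G}^n_{\alpha^{-2}t}$, whose tip recedes at speed $\alpha^{-1}$ ... actually the cleanest route is an ODE comparison: since $\lambda(0,t) = 1/\ell(t)$ is the max of $\lambda$ and $\tfrac{d}{dt}(-\gamma) = H\nu + H_\theta\tau$ gives $\tfrac{d}{dt}(-\ell(t)) = H(\pi,t) = \kappa(\pi,t) + (n-1)\lambda(\pi,t) \le \alpha^{-1} + (n-1)/\ell(t)$ using that $H(\pi,t) \le \lim H(\pi,t) = \alpha^{-1}$... hmm, but that bounds $H(\pi,t)$ not $\lambda$. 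Let me instead directly bound $\lambda$ via its evolution equation.

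Cleaner approach for the second bound: apply the maximum principle to $\lambda$ using \eqref{eq:rotationalevolutionlambda}, $\lambda_t = \kappa^2\lambda_{\theta\theta} + |\mathrm{II}|^2\lambda - \lambda(H+\kappa)\tan\theta\,\lambda_\theta$. At a spatial maximum of $\lambda$ (which, by the monotonicity argument above, occurs at $\theta = 0$ where $\tan\theta = 0$ and $\lambda_\theta = 0$ anyway, by Lemma \ref{lem:lambdalemma}-type reasoning or directly since $0$ is a critical point), we get $\lambda_{\max}'(t) \le |\mathrm{II}|^2 \lambda_{\max} = (\kappa^2 + (n-1)\lambda^2)\lambda_{\max}$. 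This is not obviously enough, so instead I would compare $\lambda$ directly with the rotational curvature of the limiting Grim hyperplane. The function $\mu := \lambda\cdot(-t)$ — or better, I would show $w := \lambda(0,t) \le \alpha/(-t)$ by noting $w = 1/\ell(t)$ and proving $\ell(t) \ge (-t)/\alpha$ as follows: by Lemma \ref{lem:grimconvergence} and since $P(\varphi,t)$ has $\langle P(\varphi,t), e_2\rangle = y$-coordinate controlled by the Grim profile, and the Grim hyperplane $\alpha\mathrm{G}^n_{\alpha^{-2}t}$ has its tip at distance $\sim \alpha^{-1}(-t)$ from ... The honest statement is: $\ell(t) = \langle \gamma(\pi,t), e_2\rangle$ and $\tfrac{d}{dt}\ell(t) = -H(\pi,t) \ge -\alpha^{-1}$ (since $H(\pi,\cdot)$ increases to $\alpha^{-1}$, hence $H(\pi,t) \le \alpha^{-1}$ for all $t$), so integrating from $t$ to any $s < 0$: $\ell(s) - \ell(t) = -\int_t^s H(\pi,r)\,dr \ge -\alpha^{-1}(s-t)$, i.e. $\ell(t) \ge \ell(s) + \alpha^{-1}(t - s)$... that gives the wrong sign since $t < s$. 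Reversing: $\ell(t) - \ell(s) = \int_s^t H(\pi,r)\,dr$... with $t < s$ this is $-\int_t^s \le 0$, consistent with $\ell$ decreasing in $-t$... no, $\ell$ increases as $t \to -\infty$. I have a sign error; $\tfrac{d}{dt}\langle\gamma(\pi,t),e_2\rangle = \langle \partial_t\gamma, e_2\rangle = -H(\pi,t)\langle\nu,e_2\rangle = -H(\pi,t)$ since at $\theta = \pi$, $\nu = e_2$. So $\ell(t)$ is \emph{decreasing} in $t$, hence increasing as $t \to -\infty$: good. Then for $t_1 < t_2 < 0$: $\ell(t_1) - \ell(t_2) = -\int_{t_1}^{t_2} H(\pi,r)\,dr \ge -\alpha^{-1}(t_2 - t_1)$, so $\ell(t_1) \ge \ell(t_2) - \alpha^{-1}(t_2 - t_1)$. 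Letting $t_2 \to 0^-$: $\ell(t_1) \ge \ell(0^-) + \alpha^{-1}(-t_1) \ge \alpha^{-1}(-t_1)$ since $\ell(0^-) \ge 0$. Hence $\lambda(0,t) = 1/\ell(t) \le \alpha/(-t)$, and since $\lambda(0,t) = \max_{\theta}\lambda(\theta,t)$, the bound holds for all $\theta$.

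Assembling: the first bound $\lambda \le \kappa$ uses \eqref{eq:rotationalevolutionu} and the ancient-solution maximum principle with the asymptotic behavior from Lemmas \ref{lem:asymptoticsmiddle} and \ref{lem:grimconvergence} supplying the condition "at $-\infty$"; the second bound $\lambda \le \alpha/(-t)$ uses the identity $\lambda(0,t) = 1/\ell(t)$, the fact (via \eqref{eq:Dlambda} and $\kappa \ge \lambda$) that this is the spatial maximum of $\lambda$, and the integral estimate $\ell(t) \ge \alpha^{-1}(-t)$ coming from $\tfrac{d}{dt}\ell = -H(\pi,\cdot) \ge -\alpha^{-1}$. \textbf{The main obstacle} I anticipate is the first bound: without $y$-axis symmetry one cannot run the argument of Lemma \ref{lem:criticalpoints} verbatim, so one must justify carefully that the infimum of $u = \kappa/\lambda$ over the infinite past is $1$ — i.e. that no deficit persists into $t = -\infty$ — which is where the explicit asymptotic descriptions (slab behavior away from the tips, Grim behavior at the tips) are essential.
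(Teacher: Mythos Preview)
Your overall strategy for both bounds is the same as the paper's, but there are two errors in the execution that you should fix.

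\medskip
\textbf{First bound ($\lambda\le\kappa$).} The approach via $u=\kappa/\lambda$, the boundary condition $u=1$ at the poles, and the evolution equation \eqref{eq:rotationalevolutionu} is exactly what the paper does. However, your claim that ``$u\to 1$ uniformly as $t\to-\infty$'' is false: by Lemma~\ref{lem:grimconvergence} one has $\lambda\to 0$ while $\kappa\to|\cos\theta|>0$ away from the poles, so in fact $u=\kappa/\lambda\to\infty$ for every $\theta\in(-\tfrac\pi2,\tfrac\pi2)$. This is harmless for the argument --- all you need is $\liminf_{t\to-\infty}\min_\theta u\ge 1$, which is certainly implied --- but the statement as written is wrong.

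\medskip
\textbf{Second bound ($\lambda\le\alpha/(-t)$).} Here there is a genuine mistake. You write ``$H(\pi,\cdot)$ increases to $\alpha^{-1}$, hence $H(\pi,t)\le\alpha^{-1}$ for all $t$'', but this has the inequality backwards. By the Harnack inequality $H(\pi,\cdot)$ is nondecreasing in $t$, and by definition $\alpha^{-1}=\lim_{t\to-\infty}H(\pi,t)$, so for every $t<0$ one has
\[
H(\pi,t)\ \ge\ \alpha^{-1},
\]
not $\le$. You then compound this with a sign error in the integration: in fact
\[
\ell(t_1)-\ell(t_2)\;=\;+\int_{t_1}^{t_2}H(\pi,r)\,dr
\]
for $t_1<t_2$ (since $\tfrac{d}{dt}\ell=-H(\pi,\cdot)$), not $-\int$. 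The two errors cancel and you land on the correct conclusion $\ell(t)\ge\alpha^{-1}(-t)$, but the reasoning as written is invalid. With the correct inequality $H(\pi,t)\ge\alpha^{-1}$ and the correct sign in the integral, the estimate
\[
\ell(t)=\int_t^0 H(\pi,r)\,dr\ \ge\ \alpha^{-1}(-t)
\]
is immediate, and then $\max_\theta\lambda(\theta,t)=\lambda(0,t)=\lambda(\pi,t)=1/\ell(t)\le\alpha/(-t)$ follows from the monotonicity of $\lambda$ via \eqref{eq:Dlambda}, exactly as in the paper.
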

\begin{proof}
By Lemma \ref{lem:lambdalemma}, $\lambda(\pm\frac{\pi}{2},t)=\kappa(\pm\frac{\pi}{2},t)$ for all $t$ and, by Lemma \ref{lem:grimconvergence},
\[
\lim_{t\to-\infty}\frac{\kappa}{\lambda}(\theta,t)=\infty
\]
for any $\theta\in(-\frac{\pi}{2},\frac{\pi}{2})$. It now follows from the evolution equation \eqref{eq:rotationalevolutionu} and the symmetry of $\gamma$ that $\kappa\geq \lambda$ in $S^1\times(-\infty,0)$.

Next, we claim that
\begin{equation}\label{eq1:l>1/t}
\ell(t)=\la\gamma(\pi,t), e_2\ra\ge-\alpha^{-1}t
\end{equation}
for all $t<0$. Indeed, since the translated profile curves converge to the Grim Reaper of width $\alpha \pi$ and, by the Harnack inequality, $H(\pi,t)$ is non-decreasing in $t$, we find $H(\pi,t)\ge\a^{-1}$ for all $t<0$ and hence
\[
\frac{d}{dt}\la\gamma(\pi,t), e_2\ra=-H(\pi,t)\leq -\alpha^{-1}\,.
\]
Integrating from $t$ to $0$ yields \eqref{eq1:l>1/t}. Furthermore, since we have shown that $\kappa\geq \lambda$, \eqref{eq:Dlambda} shows that  $\l(\theta,t)$ is non-decreasing in $\theta$ for $\theta\in [\tfrac{\pi}{2}, \pi]\cup[\tfrac{3\pi}{2},2\pi]$ and non-increasing in $\theta$ for $\theta\in [0, \tfrac{\pi}{2}]\cup[\pi,\tfrac{3\pi}{2}]$. We conclude that also
\[
\l(\theta,t)\le \l(\pi,t)=\l(0,t)=\frac{1}{\la\gamma(\pi,t), e_2\ra}\le \frac{\a}{-t}\,.
\]
\end{proof}

We now show that the area enclosed by $\Gamma_t$ grows almost like $-2\pi t$ to highest order in $t$.

\begin{lemma}\label{lem:areaupperbound1}
For every $t<0$ and any $\omega\in(0,\tfrac{\pi}{2})$
\[
A(t)\le -(2\pi+4(n-1)\omega)t+2(n-1)(\pi+n\a L(\omega,t)\log(-t))\,,
\]
where $A(t)$ denotes the area enclosed by the profile curve $\gamma(\cdot,t)$  and $L(\omega,t)$ is the length of the segment $\gamma(\cdot,t)|_{\left[\tfrac{\pi}{2}+\omega, \tfrac{3\pi}{2}-\o\right]}$.
\end{lemma}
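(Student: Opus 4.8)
The plan is to estimate the area $A(t)$ enclosed by $\Gamma_t$ using the evolution equation \eqref{eq:rotationalevolutionarea}, namely $-\frac{d}{dt}A = 2\pi + (n-1)\int \frac{\lambda}{\kappa}\,d\theta$, by splitting the integral over the turning angle into the ``edge'' portion $\theta \in [\frac{\pi}{2}+\omega, \frac{3\pi}{2}-\omega]$ and its complement (the two ``polar caps'' near $\theta = \pm\frac{\pi}{2}$). On the polar caps we simply use $0 \le \lambda \le \kappa$ from Lemma \ref{lem:k>l}, so $\frac{\lambda}{\kappa} \le 1$ there, and the measure of the two caps in turning angle is $4\omega$; this contributes at most $4(n-1)\omega$ to $-\frac{d}{dt}A$. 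On the edge portion, the key is to bound $\int_{\frac{\pi}{2}+\omega}^{\frac{3\pi}{2}-\omega} \frac{\lambda}{\kappa}\,d\theta$. Here I would use the bound $\lambda \le \frac{\alpha}{-t}$ from Lemma \ref{lem:k>l} for the numerator. For the denominator, I need a \emph{lower} bound on $\kappa$ away from the poles; the natural one comes from the fact that on the edge region the solution is asymptotic (Lemma \ref{lem:grimconvergence}) to the scaled Grim hyperplane $\alpha\mathrm{G}^n_{\alpha^{-2}t}$, on whose profile curve $\kappa$ is bounded below by a positive constant depending only on $\omega$ — but this only holds in the limit, so one must be a little careful. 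A clean substitute is to rewrite $\int \frac{\lambda}{\kappa}\,d\theta$ over the edge using $\frac{d\theta}{\kappa} = ds$ (arclength), so that $\int_{\text{edge}} \frac{\lambda}{\kappa}\,d\theta = \int \lambda\,ds \le \frac{\alpha}{-t}\,L(\omega,t)$, where $L(\omega,t)$ is precisely the arclength of the edge segment $\gamma(\cdot,t)|_{[\frac{\pi}{2}+\omega,\frac{3\pi}{2}-\omega]}$. This is exactly the quantity appearing in the statement and avoids any pointwise lower bound on $\kappa$.

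Combining these two estimates gives
\[
-\frac{d}{dt}A(t) \le 2\pi + 4(n-1)\omega + \frac{(n-1)\alpha}{-t}L(\omega,t)\,.
\]
Now I integrate from $t$ to some fixed later time; since $A(s) \to 0$ as $s \to 0^-$ (the solution shrinks to a round point by Huisken's theorem, item (1)-type behaviour — or more elementarily, $A(s)\le -2n\pi s$ from \eqref{area-t estimate}-type bounds which hold for any such solution), integrating the first two terms produces $-(2\pi + 4(n-1)\omega)t$. The remaining term requires controlling $\int_t^0 \frac{L(\omega,s)}{-s}\,ds$. For this I need an a priori linear-in-$(-s)$ bound on $L(\omega,s)$: since $\Gamma_s$ lies in the slab of width $\pi$ and has vertical extent $\ell(s)$, and $\ell(s) \le -\frac{2n\,\mathrm{e}^{-2n/s}}{1-\mathrm{e}^{-R}}s$-type bounds (or, for a general solution, $\ell(s) = O(-s\log(-s))$ from the area bound and convexity), the edge segment, being a convex arc inside a box of dimensions roughly $\pi \times \ell(s)$, has length at most $C\ell(s) \le C(-s)\log(-s)$ — but actually a sharper bound is available: near the Grim asymptotics, $L(\omega,s) = 2\ell(s) + O(1)$ roughly, and $\ell(s) \le \alpha^{-1}(-s) + o(-s)$, which when divided by $-s$ and integrated gives a term of order $-t$ plus lower order. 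Tracking the constants carefully, the $\int_t^0 \frac{L(\omega,s)}{-s}ds$ contribution should come out as $n\alpha L(\omega,t)\log(-t) + O(1)$ to the stated order, using monotonicity of $L(\omega,\cdot)$ or a direct estimate $L(\omega,s) \le L(\omega,t)$ for $s \ge t$ (which follows from convexity and the fact that the curves are shrinking), so that $\int_t^0 \frac{L(\omega,s)}{-s}ds \le L(\omega,t)\log(-t) + C$.

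The main obstacle I anticipate is the last step: controlling $\int_t^{0} \frac{L(\omega,s)}{-s}\,ds$ and extracting precisely the coefficient $n\alpha$ in front of $L(\omega,t)\log(-t)$. This needs a monotonicity or comparison statement for the edge arclength $L(\omega,\cdot)$ — that it does not grow as $s$ increases toward $0$, or at least that $L(\omega,s) \le L(\omega,t)(1 + o(1))$ for $s\in[t,0)$ — combined with the factor-of-$n$ loss coming from the crude area bound $A \le -2n\pi t$ used near $s = 0$ where the Grim asymptotics are no longer valid. One has to split $\int_t^0$ at, say, $s = -1$: on $[-1,0)$ use the crude bound $A(s)\le -2n\pi s$ directly (contributing an $O(1)$ constant after dividing by the already-integrated area), and on $[t,-1]$ use the refined estimate above with $L(\omega,s)\le L(\omega,t)$, yielding $(n-1)\alpha L(\omega,t)\int_t^{-1}\frac{ds}{-s} = (n-1)\alpha L(\omega,t)\log(-t)$; absorbing the residual $(n-1)\pi$ and the endpoint terms gives the stated bound $2(n-1)(\pi + n\alpha L(\omega,t)\log(-t))$. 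The bookkeeping of which constant is $n$ versus $n-1$ is delicate but routine once the splitting is set up.
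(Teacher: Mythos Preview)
Your overall strategy coincides with the paper's: integrate the area evolution, split off the polar caps (where $\lambda\le\kappa$ yields the $-4(n-1)\omega t$ term), split the time interval at $\sigma=-1$ (on $[-1,0]$ the bound $\lambda\le\kappa$ on the edge contributes the constant $2(n-1)\pi$), and on $[t,-1]$ use $\lambda\le\alpha/(-\sigma)$ together with $d\theta/\kappa=ds$ to rewrite the remaining integral as $\int_t^{-1}\tfrac{\alpha}{-\sigma}\,(\text{edge arclength at time }\sigma)\,d\sigma$.

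The gap is in your last step. You must compare the edge arclength at a later time $\sigma$ to that at time $t$, and you assert $L(\omega,s)\le L(\omega,t)$ for $s\ge t$ ``from convexity and the fact that the curves are shrinking''. This is not justified: while the total length of $\Gamma_s$ decreases under the flow, the length of a \emph{fixed turning-angle segment} need not. Since $L(\omega,s)=\int\kappa(\theta,s)^{-1}\,d\theta$, one computes $\partial_s L(\omega,s)=-\int(H_{\theta\theta}+H)\,d\theta$, which carries a boundary term $H_\theta$ of indeterminate sign. The paper's argument instead invokes the differential Harnack inequality: for an ancient convex solution $H(\theta,\cdot)$ is non-decreasing, and combining this with the pinching $\kappa\le H\le n\kappa$ (from $0\le\lambda\le\kappa$) gives
\[
\kappa(\theta,t)\le H(\theta,t)\le H(\theta,\sigma)\le n\kappa(\theta,\sigma)\,,
\]
hence $\int\kappa(\theta,\sigma)^{-1}\,d\theta\le n\int\kappa(\theta,t)^{-1}\,d\theta=nL(\omega,t)$. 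This Harnack comparison is precisely the source of the factor $n$ in the statement; it does \emph{not} come from the crude bound $A\le -2n\pi t$, which is not used here at all. With this correction your argument goes through and is essentially the paper's proof.
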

\begin{proof}
Integrating \eqref{eq:rotationalevolutionarea} and recalling the reflection symmetry of $\Gamma_t$ yields, for any $\omega\in(0,\frac{\pi}{2})$,
\begin{align*}
A(t)={}&-2\pi t+2(n-1)\int_t^0\hspace{-1.5mm}\left(\hspace{-0.5mm}\int_{[0,\frac{\pi}{2}-\omega]\cup[\pi,\frac{3\pi}{2}-\omega]}\frac{\lambda(\theta,\sigma)}{\kappa(\theta,\sigma)}\,d\theta\right)d\sigma\\
{}&+2(n-1)\int_t^0\hspace{-1.5mm}\left(\hspace{-0.5mm}\int_{[\frac{\pi}{2}-\omega,\frac{\pi}{2}]\cup[\frac{3\pi}{2}-\omega,\frac{3\pi}{2}]}\frac{\lambda(\theta,\sigma)}{\kappa(\theta,\sigma)}\,d\theta\right)d\sigma\,.
\end{align*}
Using Lemma \ref{lem:k>l} to estimate 
\begin{equation*}
\lambda\leq
\begin{cases}
\kappa &\text{if }\; -1<t<0\\
\frac{\alpha}{-t}&\text{if }\; t<-1
\end{cases}
\end{equation*}
in the first line and $\lambda\leq\kappa$ in the second yields
\begin{align*}
A(t)\leq{}&-2\pi t+2(n-1)(\pi-2\o t)\\
{}&+2(n-1)\alpha\int_t^{-1}\hspace{-1mm}\frac{1}{-\sigma}\left(\int_{[0,\frac{\pi}{2}-\omega]\cup[\pi,\frac{3\pi}{2}-\omega]}\frac{d\theta}{\kappa(\theta,\sigma)}\right)d\sigma\,.
\end{align*}
The claim follows by estimating, 
for any $t<\sigma<0$,
\[
\k(\theta,t)\le H(\theta,t)\le H(\theta,\sigma)\le n\k(\theta,\sigma)\,.
\]
\end{proof}

We now have all the ingredients needed to prove that $\alpha=1$.
\begin{lemma}\label{lem:a=1}
$\a=1$.
\end{lemma}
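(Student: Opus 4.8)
The plan is to show that if $\alpha<1$ then the area lower bound $A(t)\geq 2h(t)\ell(t)$ (using convexity, as in Lemma \ref{hl-lemma}) forces $A(t)$ to grow strictly faster in $t$ than the upper bound provided by Lemma \ref{lem:areaupperbound1}, a contradiction. The key point is that the horizontal displacement $h(t):=\max_\theta|\langle\gamma(\theta,t),e_1\rangle|$ tends to $\frac{\pi}{2}$ as $t\to-\infty$ (this is essentially Lemma \ref{lem:asymptoticsmiddle}: the parabolic region fills out the slab $\Omega$), while the vertical displacement satisfies $\ell(t)\geq -\alpha^{-1}t$ from \eqref{eq1:l>1/t} in the proof of Lemma \ref{lem:k>l}. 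Hence $A(t)\geq 2h(t)\ell(t)\geq -\pi\alpha^{-1}t(1-o(1))$ as $t\to-\infty$.

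On the other hand, Lemma \ref{lem:areaupperbound1} gives, for every fixed $\omega\in(0,\tfrac{\pi}{2})$,
\[
A(t)\le -(2\pi+4(n-1)\omega)t+2(n-1)\big(\pi+n\alpha L(\omega,t)\log(-t)\big)\,.
\]
The first step is therefore to control $L(\omega,t)$, the length of the arc $\gamma(\cdot,t)|_{[\tfrac{\pi}{2}+\omega,\tfrac{3\pi}{2}-\omega]}$. Since this arc avoids the `edge' (it stays in turning angles bounded away from $\pm\tfrac{\pi}{2}$), and since the solution lies in the slab $\Omega$ of width $\pi$, its $e_1$-extent is at most $\pi$; its $e_2$-extent is controlled by the Grim asymptotics. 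More carefully, on $[\tfrac\pi2+\omega,\pi]$ the curvature $\kappa$ is bounded below (away from $\omega=0$) — indeed by Lemma \ref{lem:grimconvergence} the rescaled profile curves converge to the Grim Reaper of width $\alpha\pi$, on which $\kappa(\theta)=\cos^2\theta/\alpha$ is bounded below on $[\tfrac\pi2+\omega,\pi]$, uniformly for $t$ large negative — so $L(\omega,t)=\int_{\tfrac\pi2+\omega}^{\tfrac{3\pi}{2}-\omega}\tfrac{d\theta}{\kappa(\theta,t)}$ is bounded \emph{uniformly in $t$} by a constant $L(\omega)<\infty$. Thus the last term in the Lemma \ref{lem:areaupperbound1} bound is $O(\log(-t))$, and we obtain
\[
A(t)\le -(2\pi+4(n-1)\omega)t+O(\log(-t))\,.
\]

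Combining the two estimates: $-\pi\alpha^{-1}t(1-o(1))\le A(t)\le (2\pi+4(n-1)\omega)(-t)+O(\log(-t))$, so dividing by $-t$ and sending $t\to-\infty$ gives $\pi\alpha^{-1}\le 2\pi+4(n-1)\omega$ for every $\omega>0$, hence $\pi\alpha^{-1}\le 2\pi$, i.e. $\alpha\ge\tfrac12$ — which is not yet the conclusion. The fix is that the factor $2$ in $A\ge 2h\ell$ is wasteful: one should instead note that, by Lemma \ref{lem:asymptoticsmiddle}, for any $\delta>0$ and $t$ sufficiently negative the profile curve contains the rectangle $[-(\tfrac\pi2-\delta),\tfrac\pi2-\delta]\times[-(\ell(t)-C_\delta),\ell(t)-C_\delta]$ for a suitable $C_\delta$ — more precisely, the region between the two Grim-Reaper graphs over $[-(\tfrac\pi2-\delta),\tfrac\pi2-\delta]$, whose area is $(\pi-2\delta)\ell(t)-O(1)$ — giving $A(t)\ge(\pi-2\delta)\ell(t)-O(1)\ge (\pi-2\delta)\alpha^{-1}(-t)-O(1)$. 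Running the comparison again yields $(\pi-2\delta)\alpha^{-1}\le 2\pi+4(n-1)\omega$ for all $\delta,\omega>0$, hence $\alpha^{-1}\le 2$; to reach $\alpha=1$ we must squeeze further. The cleanest route: the profile curve lies \emph{inside} the two Grim graphs $y=\pm(\ell(t)+\log\cos(\alpha^{-1}x))$ scaled appropriately (by the avoidance principle, using that $H(\pi,\cdot)\to\alpha^{-1}$ and the Grim asymptotics of Lemma \ref{lem:grimconvergence} as a barrier from outside), so $A(t)\le \int_{-\alpha\pi/2}^{\alpha\pi/2}2(\ell(t)+\log\cos(\alpha^{-1}x))\,dx+o(-t)=2\alpha\ell(t)\cdot\tfrac{\pi}{\alpha}\cdot\tfrac12\cdots$ — more simply $A(t)\le(\text{area of a width-}\alpha\pi\text{ Grim region of height}\sim\ell(t))+o(-t)=\alpha\pi\,\ell(t)+o(-t)$, combined with the lower bound $A(t)\ge(\pi-2\delta)\ell(t)-O(1)$, gives $(\pi-2\delta)\le\alpha\pi$, hence $\alpha\ge1$, and with $\alpha\le1$ from Lemma \ref{lem:grimconvergence} we conclude $\alpha=1$.

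\textbf{Main obstacle.} The delicate point is making the two-sided area comparison sharp enough: the crude convex bounds $2h\ell\le A\le 4h\ell$ only pin $\alpha$ down to a range, so one genuinely needs the \emph{Grim-hyperplane asymptotics} (Lemma \ref{lem:grimconvergence}) as a quantitative barrier — both to bound the profile curve inside a width-$\alpha\pi$ Grim region from the outside (upper bound on $A$) and to bound it from the inside near the parabolic region (lower bound on $A$, via Lemma \ref{lem:asymptoticsmiddle}) — together with the uniform-in-$t$ bound on $L(\omega,t)$ so that the $\log(-t)$ error term in Lemma \ref{lem:areaupperbound1} is genuinely lower order. Controlling the $o(-t)$ error terms uniformly (rather than just subsequentially) is where the Harnack-monotonicity of $H(\pi,t)$ and the uniformity in Lemmas \ref{lem:asymptoticsmiddle} and \ref{lem:grimconvergence} must be used carefully.
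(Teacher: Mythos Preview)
Your proposal contains a genuine gap in the final step. You correctly observe that the crude bounds $A(t)\ge 2h(t)\ell(t)$ together with Lemma~\ref{lem:areaupperbound1} only yield $\alpha\ge\tfrac12$, and you correctly identify that the cure must involve the Grim asymptotics quantitatively. However, both of your attempted fixes fail:

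\emph{The upper bound $A(t)\le \alpha\pi\,\ell(t)+o(-t)$ is false.} You claim the profile curve lies inside a width-$\alpha\pi$ Grim region, but this contradicts Lemma~\ref{lem:asymptoticsmiddle}: the curve reaches $|x|$-values arbitrarily close to $\tfrac{\pi}{2}$, so it cannot be contained in any strip of width $\alpha\pi<\pi$. The Grim convergence of Lemma~\ref{lem:grimconvergence} only controls the curve in an $O(1)$ neighbourhood of the tip, after translation; away from the tip the curve widens to fill the slab. (Note also that $A(t)\sim -2\pi t$ from Lemma~\ref{lem:areaupperbound1}, so an upper bound of $\alpha\pi\ell\sim -\pi t$ would already be inconsistent.)

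\emph{The rectangle lower bound $A(t)\ge(\pi-2\delta)\ell(t)-O(1)$ is also false.} For the enclosed region to contain a rectangle of width $\pi-2\delta$ and height $2(\ell(t)-C_\delta)$ you would need the curve to have width at least $\pi-2\delta$ at height $\ell(t)-C_\delta$; but by the Grim asymptotics the width there tends to (something at most) $\alpha\pi$, which is $<\pi-2\delta$ for small $\delta$ when $\alpha<1$.

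The paper resolves this with a sharper \emph{lower} bound that accounts for both regimes simultaneously. Fixing $\omega\in(0,\tfrac{\pi}{2})$, one takes the points $\gamma(\tfrac{\pi}{2}-\omega,t)$ and $\gamma(-\tfrac{\pi}{2}+\omega,t)$, which by Lemma~\ref{lem:grimconvergence} have $x$-separation converging to $\alpha(\pi-O(\omega))$ and sit at height $y(0,t)+O_\omega(1)$. The enclosed region then contains (i) a rectangle of width $\approx\alpha\pi$ and full height $\ell(t)$, and (ii) two triangles joining these points to the poles $(h_\pm(t),0)$, each of base $\approx(1-\alpha)\tfrac{\pi}{2}$ and height $\approx\ell(t)$. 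This yields
\[
A(t)\ge\big(2\pi\alpha+(1-\alpha)\pi-O(\omega)\big)\ell(t)-O_\omega(1)=\big((1+\alpha)\pi-O(\omega)\big)\ell(t)-O_\omega(1)\,.
\]
Combining with the upper bound $A(t)\le(2\pi+O(\omega))\alpha\ell(t)$ (from Lemma~\ref{lem:areaupperbound1} and $-t\le\alpha\ell(t)$) gives $(1+\alpha)\pi\le 2\pi\alpha+O(\omega)$, hence $\alpha\ge 1$. The point is that the triangles capture exactly the ``excess width'' $(1-\alpha)\pi$ between the Grim tip and the slab boundary, which is the term your bounds miss.
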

\begin{proof}
First note that, by the convergence of the edge region to the Grim hyperplane (Lemma \ref{lem:grimconvergence}), the length of any fixed segment $\gamma([\frac{\pi}{2}+\omega,\frac{3\pi}{2}-\omega],t)$ is bounded as $t\to-\infty$. Thus, the area estimate of Lemma \ref{lem:areaupperbound1} provides, for any $\omega\in(0,\tfrac{\pi}{2})$, some time $t_\omega<0$ such that
\[
A(t)\leq -(2\pi+5(n-1)\omega)t\quad\text{for all}\quad t<t_\omega\,.
\]
Recalling \eqref{eq1:l>1/t}, this becomes
\begin{equation}\label{eq:areaabove}
A(t)\leq (2\pi+5(n-1)\omega)\alpha\ell(t)\quad\text{for all}\quad t<t_\omega\,.
\end{equation}
On the other hand, for any $\omega\in(0,\tfrac{\pi}{2})$, we can estimate the enclosed from area below by (see figure \ref{fig:areas})
\[
\begin{split}
\frac{1}{2}A(t)\geq \ell(t)\cdot (x(\tfrac{\pi}{2}-\omega,t)-x(\tfrac{3\pi}{2}+\o,t))&+A^+(\omega,t)-A^-(\omega,t)\,, 
\end{split}
\]
where, setting $h_+(t):=x(\tfrac\pi2,t)$ and $h_-(t):=x(-\tfrac{\pi}2, t)$,{\small
\[
\begin{split}
A^+(\o, t):={}&\frac{1}{2}\left(\ell(t)-\left[y\left(\tfrac{\pi}{2}-\omega,t\right)-y(0,t)\right]\right)\left(h_+(t)-x\left(\tfrac{\pi}{2}-\omega,t\right)\right)\\
&+\frac{1}{2}\left(\ell(t)-\left[y\left(-\tfrac{\pi}{2}+\omega,t\right)-y(0,t)\right]\right)\left(x\left(-\tfrac{\pi}{2}+\omega,t\right)-h_-(t)\right),
\end{split}
\]
}and
\[
A^-:=\int_{-\frac{\pi}{2}+\omega}^{\frac{\pi}{2}-\omega}\left[y(\theta,t)-y(0,t)\right]\frac{\cos\theta}{\k(\theta,t)}d\theta\,.
\]
\begin{figure}
\begin{center}
\includegraphics[width=0.6\textwidth]{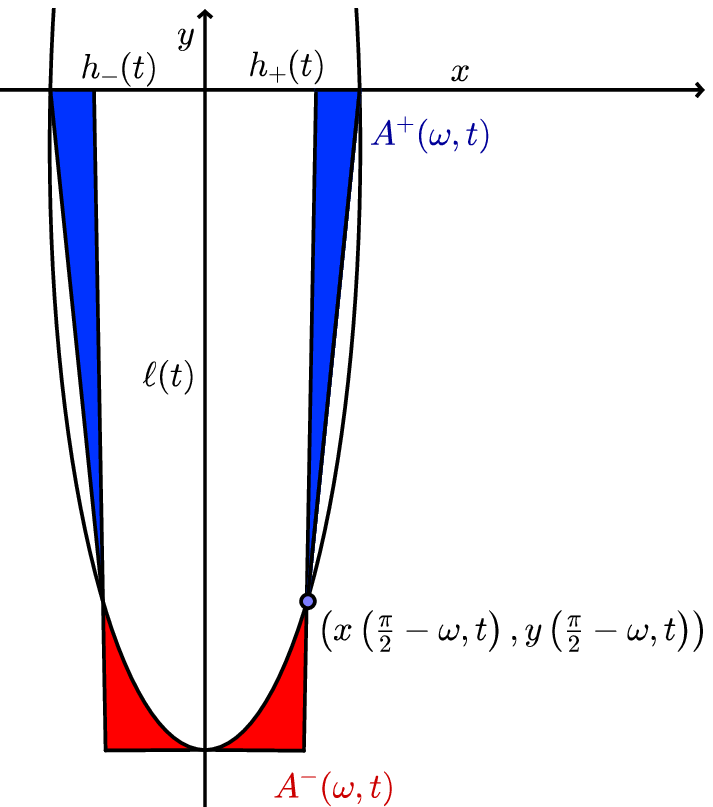}
\end{center}
\caption{Bounding the enclosed area from below.}\label{fig:areas}
\end{figure}

\noindent But, as the segment $\gamma(\cdot, t)|_{\left[-\frac{\pi}{2}+\omega,\frac{\pi}{2}-\omega\right]}$ converges to the corresponding segment of the Grim Reaper of width $\alpha$ as $t\to-\infty$, there exists some time $t_{\omega,\alpha}\le t_\o$ such that for all  $t<t_{\omega,\alpha}$
\[
\alpha\pi\geq x\left(\tfrac{\pi}{2}-\omega,t\right)-x\left(-\tfrac{\pi}{2}+\omega,t\right)\geq \alpha\left({\pi}-3\omega\right)\,,
\]
\[
\max\{y\left(\tfrac{\pi}{2}-\omega,t\right)-y(0,t)\,,\,y\left(-\tfrac{\pi}{2}+\omega,t\right)-y(0,t)\}\leq c_1(\omega,\alpha)
\]
\[
A^-(\omega,t)\leq c_2(\omega,\alpha)
\]
and, since $\gamma$ is contained in the slab $\{(x,y)\in\R^2:-\tfrac\pi2<x<\tfrac\pi2\}$ and in no smaller slab,
\[
\pi-\alpha\omega \leq h_+(t)-h_-(t)\leq \pi\,.
\]
Putting these together yields
\begin{equation}\label{eq:arealowerbound}
A(t)
\geq (2\pi\alpha-7\alpha\omega+(1-\alpha)\pi)\ell(t)-\pi c_1-4c_2\,.
\end{equation}
Recalling \eqref{eq:areaabove}, we obtain
\[
0\geq\left[(1-\alpha)\pi-(7+5(n-1))\alpha\omega\right]\ell(t)-\pi c_1-4c_2\,.
\]
Choosing $\omega$ so small that $(1-\alpha)\pi>(7+5(n-1))\alpha\omega$ and letting $t$ tend to $-\infty$ yields a contradiction, unless $\alpha=1$.
\end{proof}

This yields the desired asymptotics for the edge region.
  
 \begin{corollary}\label{cor:asymptoticsedge}
For any sequence of times $t_i\to-\infty$ and any unit vector $\varphi\in \{e_1\}^\perp$ the sequence of mean curvature flows $\{\Sigma^i_t\}_{t\in (-\infty,-t_i)}$ defined by $\Sigma^i_t:=\Sigma^i_{t+t_i}-P(\varphi,t_i)$ converges locally uniformly in the smooth topology to the Grim hyperplane $\mathrm{G}^n_t$ defined by \eqref{eq:grimplane}.
\end{corollary}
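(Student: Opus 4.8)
The plan is to combine Lemma~\ref{lem:grimconvergence} with the equality $\alpha=1$ just established in Lemma~\ref{lem:a=1}. By Lemma~\ref{lem:grimconvergence}, for any sequence of times $t_i\to-\infty$ and any unit vector $\varphi\in\{e_1\}^\perp$, the translated flows $\{\Sigma_{t+t_i}-P(\varphi,t_i)\}_{t\in(-\infty,-t_i)}$ converge locally uniformly in the smooth topology to the rescaled Grim hyperplane $\alpha\,\mathrm{G}^n_{\alpha^{-2}t}$, where $\mathrm{G}^n_t$ is defined by \eqref{eq:grimplane}. Since Lemma~\ref{lem:a=1} gives $\alpha=1$, the rescaling factor is trivial: $\alpha\,\mathrm{G}^n_{\alpha^{-2}t}=\mathrm{G}^n_t$. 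This is the entire content of the corollary, so the proof is a single line invoking the two previous results.

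The one point that warrants a sentence of care is the dependence on the choice of subsequence: Lemma~\ref{lem:grimconvergence} already asserts that the limit is $\alpha\,\mathrm{G}^n_{\alpha^{-2}t}$ \emph{independently of the chosen subsequence}, since $\alpha$ is defined intrinsically by \eqref{eq:alpha} via the monotone (by the Harnack inequality) quantity $H(\pi,t)$. Hence no diagonal or subsequence extraction is needed here beyond what is already packaged into Lemma~\ref{lem:grimconvergence}; once $\alpha=1$ is known, the statement follows verbatim.

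Since the corollary is an immediate consequence of Lemmas~\ref{lem:grimconvergence} and \ref{lem:a=1}, there is no real obstacle at this stage. All the genuine work—ruling out a narrower limiting Grim hyperplane—was carried out in Lemmas~\ref{lem:k>l}, \ref{lem:areaupperbound1} and \ref{lem:a=1} via the two-sided area estimate. The proof of the corollary itself is therefore just:

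\begin{proof}
By Lemma~\ref{lem:a=1} we have $\alpha=1$, so the rescaled Grim hyperplane $\alpha\,\mathrm{G}^n_{\alpha^{-2}t}$ appearing in Lemma~\ref{lem:grimconvergence} is exactly $\mathrm{G}^n_t$. The claim is now immediate from Lemma~\ref{lem:grimconvergence}.
\end{proof}
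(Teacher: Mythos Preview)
Your proof is correct and matches the paper's approach exactly: the paper itself gives no formal proof of this corollary, merely remarking ``This yields the desired asymptotics for the edge region'' after Lemma~\ref{lem:a=1}, since the statement is an immediate combination of Lemmas~\ref{lem:grimconvergence} and~\ref{lem:a=1}.
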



\section{Reflection symmetry}\label{sec:reflection}

Combining our knowledge of the asymptotics of the solution with the Alexandrov reflection principle, we will show that the solution must be reflection symmetric with respect to the hyperplane $\{x_1=0\}$. We remark that the Alexandrov reflection principle has been used by Chow and Gulliver \cite{Chow97,ChGu01} to prove gradient estimates for geometric flows and, recently, by Bryan and Louie to prove uniqueness of convex ancient curve shortening flows on the sphere \cite{BrLo}. We will apply it in a novel way in Section \ref{sec:uniqueness} to prove the uniqueness result of Theorem \ref{thm:uniqueness}.

Let us begin by recalling the reflection principle: Let $\Sigma^n$ be any smooth, embedded hypersurface of $\R^{n+1}$ which bounds an open set $\Omega\subset \R^{n+1}$. Given a unit vector $e\in S^n$ and some $\alpha\in \R$, denote by $\Pi_{e,\alpha}$ the halfspace $\{p\in \R^{n+1}:\left\langle p,e\right\rangle<\alpha\}$ and by $R_{e,\alpha}\cdot\Sigma:=\{p-2(\left\langle p,e\right\rangle-\alpha)e:p\in \Sigma\}$ the reflection of $\Sigma$ across the hyperplane $\partial \Pi_{e,\alpha}$. Following Chow \cite{Chow97}, we say that $\Sigma$ \emph{can be reflected strictly about $\Pi_{e,\alpha}$} if $(R_{e,\alpha}\cdot\Sigma)\cap \Pi_{e,\alpha}\subset \Omega\cap\Pi_{e,\alpha}$. The Alexandrov reflection principle can be stated as follows.

\begin{lemma}[Alexandrov reflection principle]\label{lem:Alexandrov}
Let $\{\Sigma_t\}_{t\in [t_0,0)}$ be a smooth, embedded solution of mean curvature flow. If $\Sigma_{t_0}$ can be reflected strictly about $\Pi_{e,\alpha}$
then $\Sigma_t$ can be reflected strictly about $\Pi_{e,\alpha}$ for all $t\in [t_0,0)$.
\end{lemma}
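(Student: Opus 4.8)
The plan is to prove the Alexandrov reflection principle (Lemma~\ref{lem:Alexandrov}) by reducing it to the strong maximum principle (and Hopf boundary point lemma) for the mean curvature flow, applied to the pair consisting of $\{\Sigma_t\}$ and its reflected family $\{R_{e,\alpha}\cdot\Sigma_t\}$. First I would observe that, since mean curvature flow is invariant under the isometry $R_{e,\alpha}$, the reflected family $\{R_{e,\alpha}\cdot\Sigma_t\}_{t\in[t_0,0)}$ is again a smooth, embedded solution of mean curvature flow. The hypothesis says that at time $t_0$ the portion of the reflected surface lying inside the halfspace $\Pi_{e,\alpha}$ is contained strictly inside $\Omega_{t_0}$, the open region bounded by $\Sigma_{t_0}$. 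The conclusion to be shown is that this containment persists for all later times.

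The key steps, in order, are as follows. (i) Set $T^\ast:=\sup\{t\in[t_0,0): (R_{e,\alpha}\cdot\Sigma_s)\cap\Pi_{e,\alpha}\subset\Omega_s\cap\Pi_{e,\alpha}\ \text{for all }s\in[t_0,t]\}$ and argue that the set of such $t$ is relatively open in $[t_0,0)$; by continuity of the flow and compactness (the time slices are compact since the solutions in our application are compact; in general one works on compact subsets) it is also closed in the obvious sense, so it suffices to rule out the borderline case where the strict containment degenerates to a first touching. (ii) Suppose, for contradiction, that at some first time $t_1>t_0$ the reflected surface $R_{e,\alpha}\cdot\Sigma_{t_1}$, restricted to the \emph{closed} halfspace $\overline{\Pi_{e,\alpha}}$, first touches $\Sigma_{t_1}$. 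There are two cases for the touching point $p$: either $p$ lies in the open halfspace $\Pi_{e,\alpha}$, or $p$ lies on the mirror hyperplane $\partial\Pi_{e,\alpha}$. (iii) In the interior case, near $p$ the two surfaces $\Sigma_t$ and $R_{e,\alpha}\cdot\Sigma_t$ are tangent at $t_1$ with the reflected one locally on the inside, so writing both locally as graphs over the common tangent hyperplane and letting $w$ be the (nonnegative at $t_1$, signed) difference of the graph functions, $w$ satisfies a linear parabolic equation with bounded coefficients, $w\geq 0$ for $t\leq t_1$, and $w(p,t_1)=0$; the strong maximum principle forces $w\equiv 0$ in a backward parabolic neighborhood, and then a standard connectedness/continuation argument propagates this to give $R_{e,\alpha}\cdot\Sigma_{t_0}$ touching $\Sigma_{t_0}$ inside $\Pi_{e,\alpha}$, contradicting the strict hypothesis at $t_0$. (iv) In the boundary case $p\in\partial\Pi_{e,\alpha}$, the surface $\Sigma_{t_1}$ is its own reflection near $p$ up to first order, so the difference function $w$ vanishes together with its normal derivative in the mirror direction along the hyperplane; here one applies the Hopf boundary point lemma to $w$ on the half-neighborhood $\{\langle x,e\rangle<\alpha\}$ — either $w$ vanishes identically (same contradiction as before) or its inward normal derivative is strictly positive, contradicting the vanishing forced by the reflection symmetry at the touching point. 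Either way we obtain a contradiction, so no first touching time exists and the strict reflectability is preserved.

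The main obstacle I expect is the careful bookkeeping in step (iii)–(iv): setting up the two surfaces as graphs over a common hyperplane in a space-time neighborhood, checking that the difference $w$ genuinely satisfies a uniformly parabolic linear equation (the coefficients depend on the gradients of the graph functions, which are controlled near the tangency), and — most delicately — the propagation argument that upgrades ``$w\equiv 0$ near $(p,t_1)$'' to ``the two surfaces coincide on a nonempty relatively open and closed subset, hence touch already at $t=t_0$.'' One must use connectedness of $\Sigma^n$ together with the fact that the coincidence set of two solutions of a parabolic equation is backward-in-time invariant, plus the embeddedness to ensure that ``touching inside the halfspace'' really does contradict the open hypothesis at $t_0$. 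The boundary Hopf-lemma case also requires noting that the reflection symmetry of $\Sigma_{t_1}$ across $\partial\Pi_{e,\alpha}$ near $p$ is only \emph{a posteriori} — it is exactly what we are trying to establish globally — so one argues locally: at a first boundary touching point the tangent plane of $\Sigma_{t_1}$ must be orthogonal to $\partial\Pi_{e,\alpha}$ (otherwise the reflected surface would already have crossed strictly to the inside or outside), and this orthogonality is what makes the Hopf lemma applicable. None of the individual computations is hard; the care is entirely in organizing the maximum-principle argument cleanly and handling the two geometric configurations of the touching point.
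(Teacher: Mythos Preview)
Your proposal is correct and follows essentially the same approach as the paper, which simply records that the result ``is a consequence of the strong maximum principle and the boundary point lemma for strictly parabolic equations'' and refers to \cite[Theorem 2.2]{Chow97}. Your outline of the interior and boundary touching cases is exactly the standard argument being invoked.
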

\begin{proof}
This is a consequence of the strong maximum principle and the boundary point lemma for strictly parabolic equations. See \cite[Theorem 2.2]{Chow97}.
\end{proof}

\begin{theorem}\label{thm:reflection symmetry}
Let $\{\Sigma_t\}_{t\in (-\infty,0)}$ be a compact, convex, $O(n)$-invariant ancient solution of mean curvature flow in $\R^{n+1}$ that lies in the slab $\Omega:=\{(x,y,z)\in \R\times\R\times\R^{n-1}:\vert x\vert<\tfrac{\pi}{2}\}$ (and in no smaller slab). Then $\Sigma_t$ is reflection symmetric in the plane $\{(x,y,z)\in \R\times\R\times\R^{n-1}:x=0\}$ for all $t<0$.
\end{theorem}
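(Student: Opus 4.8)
The plan is to use the Alexandrov reflection principle (Lemma~\ref{lem:Alexandrov}) applied to a moving family of reflection hyperplanes $\partial\Pi_{e_1,\alpha}$ as $\alpha$ decreases toward $0$, and to show that the set of $\alpha\ge 0$ for which $\Sigma_t$ can be reflected strictly about $\Pi_{e_1,\alpha}$ for all $t<0$ is both open and forces symmetry at $\alpha=0$. By convexity and $O(n)$-invariance it suffices to work with the profile curve $\Gamma_t$ in the plane $\E^2$; reflection symmetry of $\Gamma_t$ in the line $\{x=0\}$ is equivalent to reflection symmetry of $\Sigma_t$ in $\{x=0\}$. Write $h_+(t):=x(\tfrac\pi2,t)$ and $h_-(t):=-x(-\tfrac\pi2,t)$ for the two horizontal displacements, so that $h_+(t)+h_-(t)\to\pi$ as $t\to-\infty$ (this uses Corollary~\ref{cor:asymptoticsedge}, i.e. $\alpha=1$, applied to $\varphi=\pm e_2$) and both are less than $\tfrac\pi2$ and bounded below away from $0$ for $t$ very negative.

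\textbf{Key steps.} First I would fix a large negative time $t_0$ and observe that, for any $\alpha\in(0,\tfrac\pi2)$, whether $\Sigma_{t_0}$ can be reflected strictly about $\Pi_{e_1,\alpha}$ is controlled by the geometry of the compact curve $\Gamma_{t_0}$; since $\Gamma_{t_0}$ is strictly convex, it \emph{can} be reflected strictly about $\Pi_{e_1,\alpha}$ for all $\alpha$ in some maximal interval $(\alpha^*(t_0),\tfrac\pi2)$, where at $\alpha=\alpha^*(t_0)$ either the reflected curve becomes internally tangent to $\Gamma_{t_0}$ or the hyperplane $\partial\Pi_{e_1,\alpha}$ becomes tangent to $\Gamma_{t_0}$ at a point where the normal is horizontal. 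Then, for $\alpha>\alpha^*(t_0)$, Lemma~\ref{lem:Alexandrov} propagates the strict reflection property to all later times $t\in[t_0,0)$. The crucial point is to send $t_0\to-\infty$: using the asymptotics from Corollary~\ref{cor:asymptoticsedge} and Lemma~\ref{lem:asymptoticsmiddle}, the limiting configuration is the boundary of the slab together with two copies of the \emph{same} Grim hyperplane $\mathrm{G}^n_t$ attached symmetrically at $x=\pm\tfrac\pi2$; in particular $h_+(t)$ and $h_-(t)$ both converge to $\tfrac\pi2$ and, more importantly, $h_+(t_0)+h_-(t_0)>\pi-\epsilon$ while both stay below $\tfrac\pi2$. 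This gives, for every $\epsilon>0$, a time $t_0$ with $\alpha^*(t_0)<\epsilon$, hence $\Sigma_t$ can be reflected strictly about $\Pi_{e_1,\alpha}$ for all $\alpha>\epsilon$ and all $t\in[t_0,0)$; since the solution is ancient, letting $t_0\to-\infty$ shows $\Sigma_t$ can be reflected strictly about $\Pi_{e_1,\alpha}$ for all $\alpha\in(0,\tfrac\pi2)$ and all $t<0$. The same argument applied to $-e_1$ gives strict reflection about $\Pi_{-e_1,\alpha}$ for all $\alpha\in(0,\tfrac\pi2)$. Taking the limit $\alpha\downarrow 0$ from both sides forces $(R_{e_1,0}\cdot\Sigma_t)\cap\Pi_{e_1,0}\subseteq\overline{\Omega_t\cap\Pi_{e_1,0}}$ and the reverse containment, so $R_{e_1,0}\cdot\Sigma_t=\Sigma_t$, which is the claimed reflection symmetry.

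\textbf{Main obstacle.} The delicate step is controlling $\alpha^*(t_0)$ uniformly as $t_0\to-\infty$, i.e. ruling out an internal tangency of $R_{e_1,\alpha}\cdot\Gamma_{t_0}$ with $\Gamma_{t_0}$ for $\alpha$ not close to $0$. A priori the curve $\Gamma_{t_0}$ is \emph{not} assumed reflection symmetric in the $y$-axis, so the two ``wings'' near $x=\pm\tfrac\pi2$ could in principle be asymmetric enough to create such a tangency; the asymptotic analysis of Section~\ref{sec:uniqueasymptotics} is exactly what prevents this, since it pins down both wings to the \emph{same} Grim profile and both horizontal displacements to $\tfrac\pi2$. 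One must also be slightly careful at the boundary of the time interval, but since we only need strict reflection on $[t_0,0)$ for arbitrarily negative $t_0$ and then pass to the limit, no issue arises at $t=0$ (the solution need not extend there). Combining the slab width $\pi$ with $\alpha=1$ and convexity is what makes the reflection argument close up.
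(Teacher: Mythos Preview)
Your overall strategy---Alexandrov reflection (Lemma~\ref{lem:Alexandrov}) about $\Pi_{e_1,\alpha}$, the Grim hyperplane asymptotics of Corollary~\ref{cor:asymptoticsedge} to get the reflection started for very negative times, then $\alpha\to 0$ from both sides---is exactly the paper's. The paper organizes it slightly differently: rather than introducing $\alpha^*(t_0)$ and arguing $\alpha^*(t_0)\to 0$, it fixes $\alpha\in(0,\tfrac{\pi}{4})$ and shows directly (by contradiction) that strict reflection about $\Pi_\alpha$ holds for all sufficiently negative $t$; but the content is the same.

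The one genuine gap in your write-up is the implication ``$h_+(t_0)+h_-(t_0)>\pi-\epsilon$ while both stay below $\tfrac{\pi}{2}$ $\Rightarrow$ $\alpha^*(t_0)<\epsilon$''. Control of the two horizontal displacements alone cannot rule out an internal tangency between $R_{e_1,\alpha}\cdot\Gamma_{t_0}$ and $\Gamma_{t_0}$ at some $\alpha$ bounded away from $0$: such a tangency could occur at a height $y$ far from zero, where $h_\pm$ say nothing about the shape of $\Gamma_{t_0}$. You correctly flag this as the main obstacle and correctly point to Corollary~\ref{cor:asymptoticsedge} as the cure, but you stop short of the mechanism. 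The paper's argument is: suppose strict reflection about $\Pi_\alpha$ fails along a sequence $t_i\to-\infty$ for some fixed $\alpha>0$. Pick $p_i=(x_i,y_i)\in\Gamma_{t_i}$ with $x_i\ge\alpha$, $y_i\ge 0$, whose reflection $p'_i=(2\alpha-x_i,y_i)$ also lies on $\Gamma_{t_i}$. Since $x'_i:=2\alpha-x_i\in(-\tfrac{\pi}{2}+2\alpha,\alpha]$ stays in a compact subinterval of $(-\tfrac{\pi}{2},\tfrac{\pi}{2})$, Corollary~\ref{cor:asymptoticsedge} gives $\langle P(e_2,t_i),e_2\rangle-y'_i\to -\log\cos\theta'$ where $\theta'=\lim x'_i$; since $y_i=y'_i$, the same limit holds for $p_i$, and the evenness of the limiting Grim Reaper in $x$ then forces $\lim x_i=-\lim x'_i$, i.e. $2\alpha=\lim(x_i+x'_i)=0$. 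This is the concrete content of your phrase ``pins down both wings to the same Grim profile'', and supplying it closes the argument.
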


\begin{proof}
Consider, for any $\alpha\in(0,\tfrac\pi2)$, the halfspace $\Pi_\alpha:=\{(x,y,z)\in \R\times\R\times\R^{n-1}:x<\alpha\}=\Pi_{e_1,\alpha}$. 

\begin{claim}\label{claim:reflectionII}
For every $\alpha\in(0,\frac{\pi}{4})$ there exists $t_\alpha>-\infty$ such that $\Sigma_t$ can be reflected strictly about $\Pi_{\alpha}$ for all $t<t_\alpha$. 
\end{claim}
\begin{proof}[Proof of Claim \ref{claim:reflectionII}]
Suppose that the claim does not hold. Then there must be some $\alpha\in(0,\frac{\pi}{4})$ and a sequence of times $t_i\to-\infty$ such that $(R_{\alpha}\cdot\Sigma_{t_i})\cap \Pi_\alpha\cap \Sigma_{t_i}\neq\emptyset$. Choose a sequence of points $p_i=x_ie_1+y_ie_2\in\Gamma_{t_i}$ whose reflection about the hyperplane $\Pi_\alpha$ satisfies $(2\alpha-x_i)e_1+y_ie_2\in (R_{\alpha}\cdot\Gamma_{t_i})\cap\Gamma_{t_i}\cap \Pi_\alpha$, where $\Gamma_t$ denotes the profile curve of $\Sigma_t$ in $\textrm{span}\{e_1,e_2\}$, and set $p'_i=x'_ie_1+ y'_ie_2:=(2\alpha-x_i)e_1+y_ie_2$. Without loss of generality, we may assume that $y'_i=y_i\geq 0$. Since $\alpha\leq x_i<\frac{\pi}{2}$, the point $p'_i$ satisfies  $\alpha\geq x'_i>-\frac{\pi}{2}+2\alpha$ so that, after passing to a subsequence, $\theta':=\lim_{i\to\infty}x'_i\in[-\frac{\pi}{2}+2\alpha,\alpha]$. Then, by Corollary \ref{cor:asymptoticsedge},
\[
\lim_{i\to \infty}(\langle P(e_2, t_i), e_2\rangle-y_i)=\lim_{i\to \infty}(\langle P(e_2, t_i), e_2\rangle-y'_i)= -\log\cos \theta'\,,
\]
where recall that $P$ is the inverse of the Gauss map. But then, again by Corollary~\ref{cor:asymptoticsedge},
\[
\lim_{i\to\infty}x_i=-\lim_{i\to\infty}x'_i=-2\alpha+\lim_{i\to\infty}x_i\,.
\]
But this implies that $\alpha=0$, a contradiction.
\end{proof}

It now follows from Lemma \ref{lem:Alexandrov} that $\Sigma_t$ can be reflected across $\Pi_\alpha$ for all $t<0$. The same argument applies when the halfspace $\Pi_\alpha$ is replaced by $-\Pi_{\alpha}=\{(x,y,z)\in \R\times\R\times\R^{n-1}:x>-\a\}$. Now take $\alpha\to 0$.

\end{proof}

\section{Area and displacement estimates}\label{sec:area estimates}

We now continue our study of convex $O(n)$-invariant ancient solutions lying in a slab. Our aim in this section is to derive refined estimates for the area $A(t)$ of the regions enclosed by the curves $\Gamma_t$ and for their vertical displacement $\ell(t)$. The latter estimate is the final ingredient needed to prove the uniqueness of the solution constructed in Theorem \ref{thm:limit}.

Since, by Theorem \ref{thm:reflection symmetry}, we know that any such solution is reflection symmetric with respect to the hyperplane $\{x_1=0\}$, the horizontal displacement
\[
h(t):=\max_{\theta\in S^1}\la\gamma(\theta,t), e_1\ra
\]
satisfies
\[
h(t)=\la\gamma(\tfrac{\pi}{2},t), e_1\ra=-\la\gamma(-\tfrac{\pi}{2},t), e_1\ra\,.
\]

We also note that the minimum value of the mean curvature occurs at the poles,
\begin{equation}\label{eq:Hminatpole}
\min_{\theta\in S^1} H(\theta,t)=H(\pm\tfrac{\pi}{2}, t)\quad\text{for all}\quad t<0\,.
\end{equation}
Indeed, by Lemma \ref{lem:k>l} and the identity \eqref{eq:Dlambda}, we see that $\l(\cdot, t)$ is non-decreasing in $(\tfrac{\pi}{2},\pi)$. Thus, using once more Lemma \ref{lem:k>l} and the $O(n)$-invariance,
\[
\k(\theta,t)\ge \l(\theta, t)\ge\l(\tfrac{\pi}{2},t)=\k (\tfrac{\pi}{2},t)
\]
for all $\theta\in(\frac{\pi}{2},\pi)$. By the reflection symmetries of $\gamma$, this proves \eqref{eq:Hminatpole}.

We want to obtain a better bound for $A(t)$ than the one provided in Lemma \ref{lem:areaupperbound1}. To do so, we need a better estimate for the integral of $\l(\cdot, t)$. Note that the part of the curve $\Gamma_t$ with $y>0$ can be written as a graph $(x,u(x,t))$ with $x\in [-h(t),h(t)]$. With respect to this graphical representation, the integral of $\lambda$ becomes
\begin{equation}\label{eq:lu}
\frac{1}{4}\int_{\Gamma_t}\l\,ds=\int_{0}^{h(t)}\frac{1}{u(x,t)}dx\,,
\end{equation}
where $s=s(t)$ is the arc-length parameter for $\Gamma_t$. So we would like to estimate the graph height $u$ from below. Note that $u$ evolves according to
\[
\frac{du}{dt}=\frac{u_{xx}}{1+(u_x)^2}-\frac{n-1}{u} =-H\sqrt{1+u_x^2}\,.
\]
Since $H$ is non-decreasing in $t$ and, by the edge asymptotics (Corollary \ref{cor:asymptoticsedge}), tends to $\vert{\cos \theta}\vert$ as $t$ tends to $ -\infty$, we can estimate
\begin{equation*}
-\frac{du}{dt}\ge 1\,.
\end{equation*}
Given $x\in [0,\frac{\pi}{2})$, denote by $T(x)\in(-\infty,0)$ the time at which the profile curve passes through the point $xe_1$. Then $h(T(x))=x$ and hence $u(x,T(x))=0$ so that, integrating the preceding inequality between $T$ and $t$,
\begin{equation*}
u\left(x, t\right) \ge -t+T(x)\quad\text{for all}\quad x\in [0,h(t))\,.
\end{equation*}

We will estimate $T(x)$ from below by estimating $h(t)$ similarly as in Lemmas \ref{Hmin-lemma} and \ref{h-lemma}. Fix $x\in [0,\frac{\pi}{2})$ and consider the circle $\mathcal{C}(x,t)$ which passes through the points $\gamma(\frac{\pi}{2},t)=(h(t),0)$ and $(x,u(x,t))$. Then, proceeding as in the proof of Lemma \ref{Hmin-lemma}, we find that $\Gamma_t$ lies locally outside of $\mathcal{C}(x,t)$ near $(h(t),0)$ and hence
\begin{equation}\label{eq:Hminbound}
H_{\min}(t)\le \frac{2n(h(t)-x)}{u(x,t)^2+(h(t)-x)^2}\le \frac{2n(h(t)-x)}{u(x,t)^2}\,.
\end{equation}

On the other hand, estimating $H(0,t)\geq\vert{\cos 0}\vert=1$,
\begin{equation*}
\ell(t)=-\int_t^0H(0,s)\,ds\ge -t
\end{equation*}
so that, setting $x=0$ in \eqref{eq:Hminbound},
\begin{equation}\label{eq:Hminboundk=1}
H_{\min}(t)\leq \frac{n\pi}{(-t)^2}\,.
\end{equation}
This then implies
\[
-\frac{dh}{dt}=H_{\min}\leq \frac{2nh}{h^2+\ell^2}\leq \frac{2nh}{(-t)^2}\,,
\]
which, after integrating, yields
\[
h(t)\ge h(T_0)\exp\left(\frac{2n}{t}-\frac{2n}{T_0}\right)\quad\text{for any}\quad T_0<t<0
\]
and hence, after taking $T_0\to-\infty$,
\begin{equation}\label{eq:hboundk=1}
h(t)\ge\tfrac\pi2 \mathrm{e}^{\frac{2n}{t}}\ge \frac{\pi}{2}\left(1-\frac{2n}{-t}\right)\quad\text{for all}\quad t<0\,.
\end{equation}
Setting $t=T(x)$, this now provides an estimate for $T(x)$ and hence $u(x,t)$:
\begin{equation}\label{eq:uboundk=1}
u\left(x, t\right) \ge -t+T(x)\ge-t-\frac{n\pi}{\frac{\pi}{2}-x}\,.
\end{equation}
We can now pass this estimate back into \eqref{eq:Hminbound}! By carefully choosing the point $x$, the process can be iterated.

\begin{lemma}\label{lem:uHhk} 
For every $k\in \N$ there exists a constant $c_k$ such that, for all $t<0$ and all $x\in[0, h(t)]$,
\begin{enumerate}[(i)]
\item $\displaystyle H_{\min}(t)\le\frac{n\pi kc_k}{(-t)^{k+1}}$,
\item $\displaystyle h(t)\ge\frac\pi2\left(1-\frac{2nc_k}{(-t)^{k}}\right)$ and
\item $\displaystyle u(x, t) \ge-t-\left(\frac{n\pi c_k}{\frac{\pi}{2}-x}\right)^\frac{1}{k}$.
\end{enumerate}
\end{lemma}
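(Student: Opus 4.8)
The plan is to prove the three estimates simultaneously by induction on $k$, exactly mirroring the bootstrap already carried out for $k=1$ in the displays \eqref{eq:Hminboundk=1}, \eqref{eq:hboundk=1} and \eqref{eq:uboundk=1} above. The base case $k=1$ is precisely those three displays, with $c_1$ chosen appropriately (e.g. $c_1=1$, adjusting constants). For the inductive step, suppose (i)--(iii) hold for some $k$ with constant $c_k$; I will produce $c_{k+1}$.

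First I would upgrade the lower bound on $u$. From the inductive hypothesis (iii), for a point $x$ to be chosen shortly, we have $u(x,t)\ge -t-\left(\frac{n\pi c_k}{\frac\pi2-x}\right)^{1/k}$; feeding this into the geometric bound \eqref{eq:Hminbound}, namely $H_{\min}(t)\le \frac{2n(h(t)-x)}{u(x,t)^2}$, and using $h(t)-x\le \frac\pi2-x$, gives $H_{\min}(t)\le \frac{2n(\frac\pi2-x)}{\left(-t-(n\pi c_k/(\frac\pi2-x))^{1/k}\right)^2}$ whenever the denominator is positive. The key move — just as in the passage from \eqref{eq:uboundk=1} back into \eqref{eq:Hminbound} — is to optimize over $x$, or more cheaply, to \emph{choose} $\frac\pi2-x$ as a suitable power of $(-t)$. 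Taking $\frac\pi2-x$ comparable to $(-t)^{-k}$ makes the subtracted term $\left(n\pi c_k/(\frac\pi2-x)\right)^{1/k}$ comparable to $(-t)$, which is the wrong sign, so instead one takes $\frac\pi2-x$ a small multiple of $(-t)^{1-k}$ — wait, one wants the subtracted term to be a small fraction of $-t$; choosing $\frac\pi2-x = \beta (-t)^{-k}$ for a fixed small $\beta$ makes the subtracted term $(n\pi c_k/\beta)^{1/k}(-t)$, still linear in $-t$. The correct scaling, as the statement (i) reveals, is to take $\frac\pi2-x$ of order $(-t)^{-k}$ times a \emph{large} constant absorbed so that $u(x,t)\ge \frac12(-t)$ for $t$ very negative; then $H_{\min}(t)\le \frac{2n(\frac\pi2-x)}{\frac14(-t)^2}$, and with $\frac\pi2-x$ chosen of order $(-t)^{-(k-1)}\cdot\frac1{(-t)}$... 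The cleanest route: set $x = x(t)$ by $\frac\pi2-x(t) = \left(n\pi c_k\right)\big/\left(\tfrac12(-t)\right)^{k}$, so that $(n\pi c_k/(\frac\pi2-x))^{1/k}=\tfrac12(-t)$ and hence $u(x,t)\ge -t-\tfrac12(-t)=\tfrac12(-t)$; this is valid for $x\in[0,h(t)]$ once $-t$ is large, and for bounded $t$ all estimates are trivial after enlarging $c_{k+1}$. Then \eqref{eq:Hminbound} yields $H_{\min}(t)\le \frac{2n(\frac\pi2-x)}{(\frac12(-t))^2} = \frac{2n\cdot n\pi c_k 2^k}{(-t)^k}\cdot\frac{4}{(-t)^2} = \frac{C_k}{(-t)^{k+2}}$, which gives (i) at level $k+1$ with $c_{k+1}$ chosen so that $n\pi(k+1)c_{k+1}\ge C_k$.

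Next, (ii) at level $k+1$ follows by integration exactly as in \eqref{eq:hboundk=1}: from $-\frac{dh}{dt}=H_{\min}\le \frac{2nh}{h^2+\ell^2}$ and $\ell(t)\ge -t$ (which holds since $H(0,t)\ge 1$ by the edge asymptotics, Corollary \ref{cor:asymptoticsedge}), together with the improved bound (i) at level $k+1$, we get $-\frac{d}{dt}\log h(t)\le H_{\min}(t)/h(t)\cdot h(t)\cdot\ldots$ — more precisely $-\frac{d}{dt}\log\frac1h \le \frac{2n}{h^2+\ell^2}$, and using now $H_{\min}\le \frac{n\pi(k+1)c_{k+1}}{(-t)^{k+2}}$ directly: $-\frac{dh}{dt}\le \frac{n\pi(k+1)c_{k+1}}{(-t)^{k+2}}$, so $h(t)\ge h(T_0) - \int_{T_0}^t \frac{n\pi(k+1)c_{k+1}}{(-s)^{k+2}}(-ds)$, and letting $T_0\to-\infty$ with $h(T_0)\to\frac\pi2$ (by Lemma \ref{lem:asymptoticsmiddle}) gives $h(t)\ge \frac\pi2 - \frac{n\pi c_{k+1}}{(-t)^{k+1}}\cdot\frac{(k+1)}{(k+1)} = \frac\pi2\left(1-\frac{2nc_{k+1}}{(-t)^{k+1}}\right)$ after absorbing constants. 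Finally, (iii) at level $k+1$ comes from re-running the argument around \eqref{eq:uboundk=1}: for $x\in[0,h(t))$, letting $T(x)$ be the time with $h(T(x))=x$, we have $u(x,t)\ge -t+T(x)$, and (ii) at level $k+1$ applied at time $T(x)$ gives $x = h(T(x))\ge \frac\pi2\left(1-\frac{2nc_{k+1}}{(-T(x))^{k+1}}\right)$, i.e. $\frac\pi2 - x\ge \frac{n\pi c_{k+1}}{(-T(x))^{k+1}}$, hence $-T(x)\ge \left(\frac{n\pi c_{k+1}}{\frac\pi2-x}\right)^{1/(k+1)}$, and therefore $T(x)\ge -\left(\frac{n\pi c_{k+1}}{\frac\pi2-x}\right)^{1/(k+1)}$, giving $u(x,t)\ge -t-\left(\frac{n\pi c_{k+1}}{\frac\pi2-x}\right)^{1/(k+1)}$, which is (iii) at level $k+1$.

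The main obstacle is organizing the constants and the regime of validity: each estimate is asserted for \emph{all} $t<0$ and all $x\in[0,h(t)]$, but the bootstrap naturally only improves things for $-t$ large (and for $x$ not too close to $\frac\pi2$, where $u$ may be small). I would handle the small-$(-t)$ regime by noting that on any compact time interval $[t_1,0)$ all three quantities are bounded below by positive constants (for (iii), near $x=\frac\pi2$ one uses that $u$ and $\frac\pi2-x$ degenerate together at a controlled rate coming from the smooth, strictly convex profile), so the claimed inequalities hold there after enlarging $c_k$; the genuine content is the large-$(-t)$ asymptotic improvement, which the induction above delivers. A secondary technical point is verifying at each step that the chosen $x(t)$ actually lies in $[0,h(t)]$ — this holds for $-t$ large since $h(t)\to\frac\pi2$ and $\frac\pi2-x(t)\to 0$ — and that the denominator $u(x(t),t)$ stays positive, which is exactly what the choice of $x(t)$ was engineered to guarantee.
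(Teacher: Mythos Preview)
Your proposal is correct and follows essentially the same inductive bootstrap as the paper: the base case is the displays \eqref{eq:Hminboundk=1}--\eqref{eq:uboundk=1}; for the step you choose $x(t)$ with $\tfrac{\pi}{2}-x(t)$ a fixed multiple of $(-t)^{-k}$ so that the inductive bound (iii) gives $u(x(t),t)\ge c(-t)$, feed this into \eqref{eq:Hminbound} to get (i) at level $k+1$, integrate $-dh/dt=H_{\min}$ from $-\infty$ to get (ii), and then invert (ii) at $t=T(x)$ to get (iii). The paper makes the specific choice $\tfrac{\pi}{2}-\underline x=\tfrac{2n\pi c_k}{(-t)^k}$ (yielding $u(\underline x,t)\ge(1-2^{-1/k})(-t)$) and handles bounded $t$ via Huisken's asymptotics $H\sim(-t)^{-1/2}$, $u\to 0$ as $t\to 0$, but these are cosmetic differences from your choices.
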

\begin{proof}
Since $H(\cdot,t)\sim(-t)^{-\frac{1}{2}}$ \cite{Hu84} and $u(\cdot,t)\to 0$ as $t\to 0$ \cite{Hu84}, it suffices to find, for each $k\in\N$, some $t_k\leq 0$ such that the claims hold for all $t<t_k$. Note also that the estimates \eqref{eq:Hminboundk=1}, \eqref{eq:hboundk=1} and \eqref{eq:uboundk=1} prove the claim in the case $k=1$. So suppose that the claim holds for some integer $k\geq 1$. Given $t<0$ choose $\underline x=\underline x(t)$ so that
\[
\frac{\pi}{2}-\underline x=\frac{2n\pi c_k}{(-t)^k}\,.
\]
Then, when $t\leq t_k:=-(4nc_k)^{\frac{1}{k}}$, $\underline x\geq 0$ and, by the induction hypothesis on $h$, $\underline x\le h(t)$ and hence, by the induction hypothesis on $u$, $u(\underline x,t)\geq -(1-2^{-\frac{1}{k}})t$. Recalling \eqref{eq:Hminbound}, we then obtain
\[
H_{\min}(t)\leq\frac{n\pi(k+1)c_{k+1}}{(-t)^{k+2}}\,,
\]
where $c_{k+1}:=\frac{2nc_k}{(1-2^{-\frac{1}{k}})(k+1)}$. Integrating in time from $-\infty$ to $t$ yields
\[
h(t)\geq \frac{\pi}{2}\left(1-\frac{2n c_{k+1}}{(-t)^{k+1}}\right)\,.
\]
Finally, setting $t=T(x)$ and recalling \eqref{eq:uboundk=1},
\[
u(x, t) \ge-t-\left(\frac{n\pi c_k}{\frac{\pi}{2}-x}\right)^\frac{1}{k}\,.
\]
The claim now follows by induction.
\end{proof}

We can now prove the desired area estimate.

\begin{corollary}\label{cor:areabounds} There exists a constant $C$ such that
\[
-t+(n-1)\log(-t)+C\ge \frac{A(t)}{2\pi}\ge-t+(n-1)\log(-t)-C
\]
for all $t\in(-\infty,1)$.
\end{corollary}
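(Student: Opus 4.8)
The plan is to obtain the area estimate by integrating the evolution equation \eqref{eq:rotationalevolutionarea} and controlling the extra term $(n-1)\int \frac{\lambda}{\kappa}\,d\theta$ to the precision needed. Recall from Lemma \ref{eq:rotationalevolutionequations} that
\[
-\frac{d}{dt}A=2\pi+(n-1)\int_{S^1}\frac{\lambda}{\kappa}\,d\theta\,,
\]
so the whole game is to show that $\int_{S^1}\frac{\lambda}{\kappa}\,d\theta=\frac{n-1}{-t}+o\!\left(\tfrac{1}{-t}\right)$ — more precisely, that its integral from $t$ to $0$ differs from $\log(-t)$ by a bounded quantity. Using the reflection symmetries and the graphical representation, I would rewrite $\frac14\int_{\Gamma_t}\lambda\,ds=\int_0^{h(t)}\frac{dx}{u(x,t)}$ as in \eqref{eq:lu}, noting that $ds=\frac{d\theta}{\kappa}$ identifies this with a quarter of $\int\frac{\lambda}{\kappa}\,d\theta$ (after accounting for the two halves $y>0$, $y<0$). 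Thus it suffices to estimate $\int_0^{h(t)}\frac{dx}{u(x,t)}$ from above and below.

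For the \emph{upper} bound on $A(t)$ I would use the lower bound on $u$ from Lemma \ref{lem:uHhk}(iii): splitting the integral at, say, $x=\frac{\pi}{2}-\frac{1}{-t}$, on the inner part I use $u(x,t)\ge -t-\left(\frac{n\pi c_k}{\pi/2-x}\right)^{1/k}\ge \frac{-t}{2}$ once $\pi/2-x$ is not too small (taking $k$ large), contributing at most $\frac{h(t)}{-t/2}\le \frac{\pi}{-t}$; on the outer sliver $x\in[\frac{\pi}{2}-\frac{1}{-t},h(t)]$ one has length $\le \frac{1}{-t}$ and, by \eqref{eq1:l>1/t} applied to the local geometry (or Lemma \ref{lem:k>l}), $\frac{1}{u}\le\frac{1}{-t}+O(1)$, so this piece is also $O\!\left(\tfrac{1}{-t}\right)$. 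Hence $\int_0^{h(t)}\frac{dx}{u(x,t)}\le\frac{\pi}{-t}+O\!\left(\tfrac{1}{-t}\right)$, but actually to get the $(n-1)\log(-t)$ term I must be sharper: the dominant contribution comes from $x$ near $h(t)$ where $u$ is comparable to $-t+T(x)$ and, using $\frac{\pi}{2}-x\asymp h(t)-x$ together with $T(x)$ controlled via \eqref{eq:hboundk=1}–\eqref{eq:uboundk=1}, the change of variables $x\mapsto u$ turns $\int\frac{dx}{u}$ into $\int\frac{du}{u}\cdot\frac{dx}{du}$, producing exactly a $\log$. The clean way: integrate $-\frac{d}{dt}A=2\pi+(n-1)\int\frac{\lambda}{\kappa}\,d\theta$ and bound $\int_t^0\!\!\left(\int\frac{\lambda}{\kappa}\,d\theta\right)d\sigma$ using $\lambda\le\min\{\kappa,\frac{1}{-\sigma}\}$ (Lemma \ref{lem:k>l} with $\alpha=1$ by Lemma \ref{lem:a=1}), $\kappa\le H\le n\kappa$ by Harnack monotonicity, and the sharp graph bound $u(x,\sigma)\ge -\sigma + T(x)$ with $-T(x)\le \frac{n\pi}{\pi/2-x}+o(1)$ from Lemma \ref{lem:uHhk}; the double integral then evaluates to $(n-1)\log(-t)+O(1)$.

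For the \emph{lower} bound on $A(t)$ I would instead bound $u$ from \emph{above}: since $H\ge |\cos\theta|$ in the limit and $H$ is monotone, one gets $-\frac{du}{dt}=H\sqrt{1+u_x^2}\le n\kappa\cdot(\text{slope factor})$, but more simply $u(x,t)\le \ell(t)\le -t+C$ by the edge asymptotics together with a Grim-type comparison (Lemma \ref{lem:grimavoidance} adapted, or directly Corollary \ref{cor:asymptoticsedge}), which gives $\int_0^{h(t)}\frac{dx}{u(x,t)}\ge\frac{h(t)}{-t+C}$; combined with the reflection symmetry and $h(t)\ge\frac{\pi}{2}(1-\frac{2nc_k}{(-t)^k})$ this is already $\frac{\pi}{2(-t)}+O\!\left(\tfrac{1}{(-t)^2}\right)$, but again the $\log$ must be extracted by localizing near $x=h(t)$: there $u(x,t)\le -t + T(x) + o(1)$ where now I need an \emph{upper} bound on $-T(x)$, i.e. a matching \emph{upper} bound on the blow-up rate of $h$, which follows from $-\frac{dh}{dt}=H_{\min}(t)\ge \frac{c}{(-t)^2}$ (a lower bound on $H_{\min}$, obtainable from the fact that the enclosed sphere of radius $\asymp -t$ is tangent at the pole, or from $A(t)\le -2n\pi t$ forcing $h$ not to stall). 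Integrating this from below gives $\frac{\pi}{2}-h(t)\le \frac{c'}{-t}$, hence $-T(x)\ge \frac{c''}{\pi/2-x}$ up to lower order, and the change of variables again yields $(n-1)\log(-t)-O(1)$.

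The main obstacle is the simultaneous \emph{two-sided} control of the logarithmic term: getting $(n-1)\log(-t)$ with the \emph{same} coefficient $(n-1)$ on both sides requires that the rotational curvature integral $\int\frac{\lambda}{\kappa}$ really behaves like $\frac{n-1}{-t}$ and not merely $O\!\left(\tfrac{1}{-t}\right)$, which forces me to show $\lambda(\theta,t)/\kappa(\theta,t)\to 1$ uniformly on compact $\theta$-subsets of $(-\tfrac{\pi}{2},\tfrac{\pi}{2})$ is \emph{not} what matters — rather, what matters is that the measure $\frac{\lambda}{\kappa}\,d\theta$ concentrates near the poles $\theta=\pm\tfrac{\pi}{2}$ where $\lambda=\kappa$, with the right rate. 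Equivalently, in the graph picture, I need that $\int_0^{h(t)}\frac{dx}{u(x,t)}$ is dominated by the region where $u\asymp -t+T(x)$ with $-T(x)\asymp\frac{1}{\pi/2-x}$, and that the change of variables $w=-t+T(x)$ is regular enough (i.e. $\frac{dT}{dx}$ is comparable to $\frac{1}{(\pi/2-x)^2}$ from both sides) to convert $\int\frac{dx}{u}$ into $\frac{n-1}{2}\int\frac{dw}{w}+O(1)=\frac{n-1}{2}\log(-t)+O(1)$. Pinning down both the upper and lower comparability of $\frac{dT}{dx}$ — the lower one coming from Lemma \ref{lem:uHhk}, the upper one from the lower bound on $H_{\min}$ — is the delicate point; once both are in hand, integrating \eqref{eq:rotationalevolutionarea} from $t$ to $0$ (using $A(t)\to 0$ as $t\to 0$ by \cite{Hu84}) gives $\frac{A(t)}{2\pi}=-t+(n-1)\log(-t)+O(1)$, which is the claim after absorbing the bounded error into $\pm C$ and restricting to $t<1$.
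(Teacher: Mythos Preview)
Your overall strategy—integrate \eqref{eq:rotationalevolutionarea} and control $\int_{\Gamma_t}\lambda\,ds=\int_{S^1}\frac{\lambda}{\kappa}\,d\theta$ via the graphical identity \eqref{eq:lu}—is exactly the paper's. But neither half of your argument, as written, pins down the sharp coefficient needed for the $(n-1)\log(-t)$ term.

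\textbf{Upper bound.} Your crude split at $x=\tfrac{\pi}{2}-\tfrac{1}{-t}$ with $u\ge -t/2$ on the inner piece gives $\tfrac14\int\lambda\,ds\le \tfrac{\pi}{-t}+\text{(small)}$, hence $\int\lambda\,ds\le \tfrac{4\pi}{-t}+\cdots$; integrating yields $2(n-1)\log(-t)$, twice the desired coefficient. You note this and gesture at a change of variables, but never actually obtain $\int\lambda\,ds\le \tfrac{2\pi}{-t}+\text{(integrable error)}$. The paper gets this by a sharper comparison: it places a circle of radius $\rho=\tfrac{(-t)^2}{\pi}$ tangent to $\Gamma_t$ at the pole $(h(t),0)$, identifies the last point $\underline x(t)$ where $u$ meets the circle, shows (using Lemma~\ref{lem:uHhk}) that $h(t)-\underline x(t)\le C_k(-t)^{-k}$, and hence that the contribution of the arc near the pole is $o((-t)^{-k})$; on $[0,\underline x(t)]$ the bound $u\ge c$ (circle) together with a carefully chosen cut point $x_0(t)$ gives $\int_0^{\underline x}\tfrac{dx}{u}\le \tfrac{\pi}{-2t}+o((-t)^{-2+\varepsilon})$. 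This yields the sharp upper estimate on $\int\lambda\,ds$.

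\textbf{Lower bound.} Here is the genuine gap. Your first attempt uses $u\le \ell\le -t+C$, but no such upper bound on $\ell$ is available at this stage—indeed $\ell(t)\le -t+o((-t)^\varepsilon)$ (Lemma~\ref{lem:diameterbounds}) is \emph{deduced from} the present corollary, so invoking it is circular. Your second attempt requires a lower bound $H_{\min}(t)\ge c(-t)^{-2}$ (equivalently, a two-sided estimate on $\tfrac{dT}{dx}$), which you do not establish; neither an enclosed-sphere argument nor $A\le -2n\pi t$ yields this. The paper bypasses both issues with a one-line Cauchy--Schwarz bootstrap: from
\[
h(t)^2=\Big(\int_0^{h(t)}1\,dx\Big)^2\le \int_0^{h(t)}u\,dx\cdot\int_0^{h(t)}\frac{dx}{u}=\frac{A(t)}{4}\cdot\frac14\int_{\Gamma_t}\lambda\,ds
\]
one obtains $\int_{\Gamma_t}\lambda\,ds\ge \tfrac{16h(t)^2}{A(t)}$. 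Now feed in the \emph{already-proven} upper bound $A(t)\le 2\pi(-t+(n-1)\log(-t)+C)$ and the lower bound $h(t)\ge\tfrac{\pi}{2}(1-\tfrac{2nc_1}{-t})$ to get $\int\lambda\,ds\ge \tfrac{2\pi}{-t}-o((-t)^{-2+\varepsilon})$, and integrate. The key idea you are missing is this use of Cauchy--Schwarz to turn the upper area bound into the lower one, without ever needing $\ell\le -t+C$ or a lower bound on $H_{\min}$.
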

\begin{proof}
The claims will follow by estimating the integral of $\lambda$ appropriately.
\begin{claim}\label{claim:lambdaL1est}
For any $\varepsilon>0$
\[
\int_{\Gamma_t}\l\,ds\leq\frac{2\pi}{-t}+o\left(\frac{1}{(-t)^{2-\varepsilon}}\right).
\]
\end{claim}
\begin{proof}[Proof of Claim \ref{claim:lambdaL1est}]
Let $c(x,t)=\sqrt{\rho^2(t)-(x-a(t))^2}$ be the graph of the circle of radius $\rho(t):=\frac{1}{\pi}(-t)^2$ and centre $(a(t),0)$, where $a(t):=-\rho(t)+h(t)$. Note that $c$ and $u$ are tangent at the point $(h(t),0)$. Set 
\[
\underline x(t):=\inf\{x\in [\tfrac{\pi}{4},h(t)]:u(x,t)=c(x,t)\}\,.
\]
Then $u$ lies above $c$ in $[0,\underline x(t))$ and hence, recalling \eqref{eq:lu},
\begin{align*}
\frac{1}{4}\int_{\Gamma_t}\lambda\,ds={}&\int_0^{\underline x(t)}\frac{dx}{u(x,t)}+\int_{\arc \underline x(t)}\lambda\,ds\,,
\end{align*}
where, for any $x\in [0,h(t)]$, $\arc x$ is the arc of $\Gamma_t$ joining the point $(h(t),0)$ to the point $(x,u(x))$. Set
\[
\xi(t):=h(t)-\underline x(t)\,.
\]
By the monotonicity of $\lambda$ and the convexity of $\Gamma_t$, we may estimate
\begin{align}
\int_{\arc \underline x(t)}\lambda\,ds\leq \lambda(\underline x(t),t)\,\underline s(t)\leq{}&-\frac{\cos\underline \theta^u(t)}{u(\underline x(t),t)}\left(\xi(t)+u(\underline x(t),t)\right)\nonumber\\
\leq{}&-\cos\underline \theta^c(t)\left(1+\frac{\xi(t)}{c(\underline x(t),t)}\right)\,,\label{eq:arcxestimate}
\end{align}
where $\underline s(t)$ is the length of $\arc \underline x(t)$ and $\underline \theta^u$ resp. $\underline\theta^c$ is the angle between the tangent vector to $u$ resp. $c$ and the $x$-axis at $\underline x(t)$. We claim that
\begin{equation}\label{eq:xiest}
\xi(t)\leq \frac{C_k}{(-t)^k}
\end{equation}
for any $k\in\N$, where $C_k:=2n c_k\left(1-\frac{1}{\sqrt{2}}\right)^{-k}$. 
Indeed, by Lemma \ref{lem:uHhk},
\begin{align*}
-t-\left(\frac{n\pi c_k}{\xi(t)}\right)^{\frac{1}{k}}\leq u(\underline x(t),t)={}&c(\underline x(t),t)\leq{}\sqrt{2\rho\xi(t)}=-t\sqrt{\frac{2}{\pi}\xi(t)}\,.
\end{align*}
Rearranging yields
\begin{align*}
\frac{2}{\pi}\xi(t)\left(1-\sqrt{\frac{2}{\pi}\xi(t)}\right)^k\leq\frac{2nc_k}{(-t)^k}\,.
\end{align*}
The claim follows since, by assumption, $\xi(t)\leq \frac{\pi}{4}$. We can now use \eqref{eq:xiest} to estimate
\[
\frac{\xi(t)}{c(\underline x(t),t)}=\sqrt{\frac{\xi(\underline x(t),t)}{\frac{(-t)^2}{\pi}-\xi}} \le o\left(\frac{1}{(-t)^k}\right)
\]
and
\[
-\cos\underline\theta^c=\frac{1}{\sqrt{1+ c'(\underline x(t))^2}}=\frac{1}{\sqrt{1+\frac{(\r-\xi(t))^2}{2\r\xi(t)- \xi(t)^2}}}\le o\left(\frac{1}{(-t)^k}\right)
\]
for any $k\in \N$. Recalling \eqref{eq:arcxestimate}, we conclude that
\begin{align}\label{eq:lambdaestimatenearpole}
\int_{\arc \underline x(t)}\lambda\,ds\leq 
 o\left(\frac{1}{(-t)^k}\right)
\end{align}
for any $k\in \N$.

Since $u$ lies above $c$ in $[\frac{\pi}{4},\underline x(t)]$, we may estimate the remaining term by
\begin{align*}
\int_0^{\underline x(t)}\frac{dx}{u(x,t)}\leq{}& \int_0^{x_0(t)}\frac{dx}{u(x,t)}+\int_{x_0(t)}^{h(t)}\frac{dx}{c(x,t)}\\
\leq{}&\frac{x_0(t)}{u(x_0(t),t)}+\int_{x_0(t)}^{h(t)}\frac{dx}{\sqrt{\r^2-(x-a)^2}}\,,
\end{align*}
where $x_0(t)\geq \frac{\pi}{4}$ will be chosen in a moment. Note that
\begin{align*}
\int_{x_0(t)}^{h(t)}\frac{dx}{\sqrt{\r^2-(x-a)^2}}={}&\frac\pi2-\arcsin\left(1-\frac{h(t)-x_0(t)}{\r}\right)\\
\leq{}&\sqrt{\frac{2(h(t)-x_0(t))}{\rho}}+o\left(\frac{h(t)-x_0(t)}{\rho}\right)\\
={}&\frac{\sqrt{2\pi(h(t)-x_0(t))}}{-t}+o\left(\frac{1}{(-t)^2}\right)\,.
\end{align*}
Now choose $x_0(t)$ so that $\frac{\pi}{2}-x_0(t)=\frac{\pi nc_k}{(-t)^k}$. Then, by Lemma \ref{lem:uHhk}, $0\leq x_0(t)\leq h(t)$ for $t\leq t_k<0$, say, and hence
\[
u(x_0(t),t)\geq -t-c_{k,l}(-t)^{\frac{k}{l}}
\]
for any $l>k\in \N$, where $c_{k,l}:=(c_l/c_k)^{\frac{k}{l}}$. We conclude that
\[
h(t)-x_0(t)\leq \frac{\pi}{2}-x_0(t)=\frac{\pi nc_k}{(-t)^{k}}
\]
and
\begin{align*}
\frac{x_0(t)}{u(x_0(t),t)}\leq \frac{\pi}{-2t}\frac{1}{1-c_{k,l}(-t)^{\frac{k}{l}-1}}\leq{}&\frac{\pi}{-2t}\left(1+c_{k,l}(-t)^{\frac{k}{l}-1}\right)\\
\end{align*}
for any $l>k\in \N$ and hence
\begin{align*}
\int_0^{\underline x(t)}\frac{dx}{u(x,t)}\leq{}&\frac{\pi}{-2t}+o\left(\frac{1}{(-t)^{2-\varepsilon}}\right)
\end{align*}
for any $\varepsilon>0$. Combined with \eqref{eq:lambdaestimatenearpole}, this completes the proof of the claim.
\end{proof}

Integrating \eqref{eq:rotationalevolutionarea} and applying Claim \ref{claim:lambdaL1est} now yields, on the one hand,
\begin{equation}\label{eq:areaup1}
\begin{split}
A(t)\le -2\pi t+2\pi(n-1)\log(-t)+2\pi C\quad\text{for all}\quad t<-1
\end{split}
\end{equation}
for some constant $C$.

On the other hand, applying H\"older's inequality, we can estimate
\[
h(t)^2\le \int_{0}^{h(t)}{u(x)}dx\int_{0}^{h(t)}\frac{1}{u(x)}dx
=\frac{A(t)}{16}\int_0^{L(t)}\lambda(s,t)\,ds
\]
so that, bounding $h$ from below using Lemma \ref{lem:uHhk},
\[
\begin{split}
\int_{\Gamma_t}\l(s,t)ds&\ge \frac{2\pi \left(1-\frac{2nc_1}{-t}\right)^2}{-t+(n-1)\log(-t)+ {C}}\ge \frac{2\pi}{-t}-o\left(\frac{1}{(-t)^{2-\varepsilon}}\right)
\end{split}
\]
for any $\varepsilon>0$. Integrating \eqref{eq:rotationalevolutionarea}, we conclude that
\begin{equation}\label{eq:areadown1}
\begin{split}
A(t)\ge& -2\pi t+2\pi(n-1)\log(-t)- 2\pi C\quad\text{for all}\quad t<-1
\end{split}
\end{equation}
for some $C\geq 0$.
\end{proof}

The area estimates of Corollary \ref{cor:areabounds} allow us to prove the following displacement bound.

\begin{lemma}\label{lem:diameterbounds}
For any $\varepsilon\in(0,1)$,
\[
\ell(t)\leq -t+o((-t)^\varepsilon)\,.
\]
\end{lemma}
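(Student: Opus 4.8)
The plan is to play the area upper bound of Corollary~\ref{cor:areabounds} against the pointwise lower bounds for the graph height and the horizontal displacement supplied by Lemma~\ref{lem:uHhk}. By Theorem~\ref{thm:reflection symmetry} and the $O(n)$-invariance, $\Gamma_t$ is reflection symmetric about both the $e_1$- and the $e_2$-axis, so the part of $\Gamma_t$ with $\langle\gamma,e_2\rangle\geq 0$ is the graph of an even concave function $u(\cdot,t)$ on $[-h(t),h(t)]$ with $u(0,t)=\ell(t)$ and $u(\pm h(t),t)=0$; in particular $A(t)=4\int_0^{h(t)}u(x,t)\,dx$. Set $\delta(t):=\ell(t)-(-t)\geq 0$ (recall $\ell(t)\geq -t$). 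It suffices to show $\delta(t)\leq 4(n-1)\log(-t)+C'$ for some constant $C'$ and all sufficiently negative $t$, since this is $o((-t)^\varepsilon)$ for every $\varepsilon\in(0,1)$.

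First I would fix $k=2$ in Lemma~\ref{lem:uHhk}, obtaining $u(x,t)\geq -t-\big(\tfrac{n\pi c_2}{\pi/2-x}\big)^{1/2}$ for all $x\in[0,h(t)]$ and $h(t)\geq\tfrac{\pi}{2}\big(1-\tfrac{2nc_2}{(-t)^2}\big)$. For $-t$ large enough that $h(t)\geq\tfrac{\pi}{4}$, the first bound gives $u(x,t)\geq -t-D$ for $x\in[0,\tfrac{\pi}{4}]$, where $D:=2\sqrt{nc_2}$. Hence $v:=u+t+D$ is concave and nonnegative on $[0,\tfrac{\pi}{4}]$ with $v(0)=\delta(t)+D$, so the elementary chord bound $v(x)\geq v(0)\big(1-\tfrac{4x}{\pi}\big)$ (which holds because $v$ is concave and $v(\tfrac{\pi}{4})\geq 0$) integrates to $\int_0^{\pi/4}v\,dx\geq\tfrac{\pi}{8}v(0)$, and therefore
\[
\int_0^{\pi/4}u(x,t)\,dx\geq\tfrac{\pi}{8}\delta(t)+\tfrac{\pi}{4}(-t)-\tfrac{\pi}{8}D.
\]

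On the remaining interval I would integrate the Lemma~\ref{lem:uHhk} bound directly, using the lower bound on $h(t)$:
\[
\int_{\pi/4}^{h(t)}u(x,t)\,dx\geq(-t)\Big(h(t)-\tfrac{\pi}{4}\Big)-\int_{\pi/4}^{h(t)}\Big(\tfrac{n\pi c_2}{\pi/2-x}\Big)^{1/2}\,dx\geq\tfrac{\pi}{4}(-t)-\tfrac{n\pi c_2}{-t}-E,
\]
where $E:=\pi\sqrt{nc_2}$ bounds the improper integral (convergent, the exponent being $\tfrac12$). Adding the two displays and invoking Corollary~\ref{cor:areabounds},
\[
\tfrac{\pi}{8}\delta(t)+\tfrac{\pi}{2}(-t)-\tfrac{\pi}{8}D-E-\tfrac{n\pi c_2}{-t}\leq\int_0^{h(t)}u(x,t)\,dx=\tfrac{A(t)}{4}\leq\tfrac{\pi}{2}(-t)+\tfrac{\pi(n-1)}{2}\log(-t)+\tfrac{\pi C}{2}.
\]
Cancelling $\tfrac{\pi}{2}(-t)$ and rearranging gives $\delta(t)\leq 4(n-1)\log(-t)+C'$, as desired.

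The computations are routine; the one genuinely important point is \emph{how} the excess $\delta(t)$ is isolated. A naive estimate---bounding $A(t)$ from below by the area $2h(t)\ell(t)$ of the inscribed rhombus---only yields $\ell(t)\lesssim 2(-t)$, i.e.\ the wrong leading constant. The remedy, and the crux, is that by Lemma~\ref{lem:uHhk} the graph $u(\cdot,t)$ must stay within $O(1)$ of $-t$ on all of $[0,h(t)]$ except for a vanishingly small neighbourhood of the pole $x=\tfrac{\pi}{2}$, so essentially all of the area beyond $\tfrac{\pi}{2}(-t)$ is accounted for by the ``bump'' of height $\delta(t)$ near $x=0$; since that surplus is only $O(\log(-t))$ by Corollary~\ref{cor:areabounds}, so is $\delta(t)$.
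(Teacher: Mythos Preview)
Your proof is correct and, in fact, yields the slightly sharper bound $\ell(t)\leq -t+O(\log(-t))$, which of course implies the stated $o((-t)^\varepsilon)$ for every $\varepsilon\in(0,1)$.

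The overall strategy is the same as the paper's: bound $A(t)=4\int_0^{h(t)}u(x,t)\,dx$ from below using Lemma~\ref{lem:uHhk} and compare with the upper bound from Corollary~\ref{cor:areabounds}. The difference lies in the decomposition. The paper chooses a moving splitting point $\underline x(t)=\tfrac{\pi}{2}-\tfrac{n\pi c_k}{(-t)^k}$ close to the pole, discards the integral on $[\underline x(t),h(t)]$, and estimates $u(\underline x(t),t)$ via Lemma~\ref{lem:uHhk}(iii) with a \emph{second} index $l>k$; the resulting error is $(-t)^{k/l}$, which is why the conclusion is only $o((-t)^\varepsilon)$ for every $\varepsilon>0$. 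You instead fix the splitting point at $\tfrac{\pi}{4}$ and handle $[\tfrac{\pi}{4},h(t)]$ by directly integrating the pointwise bound from Lemma~\ref{lem:uHhk}(iii). Because the singularity $(\tfrac{\pi}{2}-x)^{-1/2}$ is integrable, this piece contributes only a bounded error, and the only unbounded surplus term left is the $\log(-t)$ from Corollary~\ref{cor:areabounds}. This buys you a cleaner argument (a single index $k=2$ suffices) and a sharper constant in the error; the paper's approach has the minor advantage of being a one-line trapezoid estimate once $\underline x(t)$ is chosen.
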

\begin{proof}
Given $t<0$ and $k\in\N$ set $\underline x(t):=\frac{\pi}{2}-\frac{n\pi c_k}{(-t)^k}$ so that, by Lemma \ref{lem:uHhk}, $h\geq \underline x\geq 0$ (for all $t<t_k$, say). Then we may estimate the area under the graph of $u(\cdot,t)$ from below by
\begin{align*}
\frac{1}{4}A(t)\geq{}& u(\underline x(t),t)\underline x(t)+\frac{1}{2}\left(\ell(t)-u(\underline x(t),t)\right)\underline x(t)\\
={}&\frac{1}{2}\left(\ell(t)+u(\underline x(t),t)\right)\underline x(t)\\
={}&\frac{\pi}{4}\left(\ell(t)+u(\underline x(t),t)\right)\left(1-\frac{2nc_k}{(-t)^k}\right)\,.
\end{align*}
Recalling Lemma \ref{lem:uHhk} once more, we may estimate $u(\underline x(t),t)$ from below by
\[
u(\underline x(t),t)\geq -t-\left(\frac{n\pi c_l}{\frac{\pi}{2}-\underline x(t)}\right)^{\frac{1}{l}}=-t-c_{k,l}(-t)^{\frac{k}{l}}
\]
for any $l\in\N$, where $c_{k,l}:=(\frac{c_l}{c_k})^{\frac{1}{l}}$ and hence, bounding $A(t)$ from above as in Corollary \ref{cor:areabounds},{\small
\begin{align*}
\ell(t)\left(1-\frac{2nc_k}{(-t)^k}\right)\leq{}& -t\left(1-\frac{2nc_k}{(-t)^k}\right)+4nc_k(-t)^{1-k}\\
{}&+2((n-1)\log(-t)+C)+c_{k,l}(-t)^{\frac{k}{l}}\left(1-\frac{2nc_k}{(-t)^k}\right)\,.
\end{align*}}

\noindent The claim follows since we may choose $l=rk$ with $r\in\N$ arbitrarily large.
\end{proof}

As a consequence, we are able to obtain a slightly better lower bound for $H$ than the crude estimate $H(\theta,t)\geq \vert{\cos\theta}\vert$ provided by the Harnack inequality.

\begin{corollary}\label{cor:improvedHestimate} For any $\varepsilon\in(0,1)$
\[
H(\theta, t)\ge \vert{\cos\theta}\vert\left(1+\frac{n-1}{-t}-o\left(\frac{1}{(-t)^{2-\varepsilon}}\right)\right)\;\;\text{for all}\;\; \theta \in S^1\,.
\]
\end{corollary}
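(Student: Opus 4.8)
The plan is to prove the inequality not for $H$ directly but for the ratio $P:=H/\vert\cos\theta\vert$, by writing down a parabolic evolution equation for $P$ whose reaction coefficient is \emph{nonnegative} and, more precisely, bounded below by $(n-1)/\ell(t)^2$, and then running a minimum principle. Feeding the sharp displacement bound $\ell(t)\le -t+o((-t)^\varepsilon)$ of Lemma \ref{lem:diameterbounds} into the resulting differential inequality will produce exactly the claimed correction $\tfrac{n-1}{-t}$.

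Concretely, I would proceed as follows. On the arc $\theta\in(-\tfrac\pi2,\tfrac\pi2)$ write $P=H\sec\theta$; using $\partial_\theta\sec\theta=\sec\theta\tan\theta$ one gets the identities $\sec\theta\,H_\theta=P_\theta-P\tan\theta$ and $\sec\theta\,H_{\theta\theta}=P_{\theta\theta}-2\tan\theta\,P_\theta-P$, and substituting these into \eqref{eq:rotationalevolutionH} together with $\vert\mathrm{II}\vert^2=\kappa^2+(n-1)\lambda^2$ yields
\[
P_t=\kappa^2P_{\theta\theta}-\big(2\kappa^2+(n-1)\lambda^2\big)\tan\theta\,P_\theta+(n-1)\lambda^2\sec^2\theta\,P\,.
\]
Since $\lambda=-\cos\theta/y$ the reaction term equals $(n-1)\lambda^2\sec^2\theta=(n-1)/y^2$, and because $\vert y(\theta,t)\vert\le\ell(t)$ it satisfies $(n-1)/y^2\ge(n-1)/\ell(t)^2\ge 0$. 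The same computation with $-\sec\theta=\vert\sec\theta\vert$ in place of $\sec\theta$ gives the identical equation on $(\tfrac\pi2,\tfrac{3\pi}2)$, so it suffices to work on one arc; at $\theta=\pm\tfrac\pi2$ the right-hand side of the asserted inequality vanishes while $H>0$ by strict convexity, so those two points are trivial. Next I would track $m(t):=\min_\theta P(\theta,t)$: as $\theta\to\pm\tfrac\pi2$ we have $\cos\theta\to 0$ while $H\to H(\pm\tfrac\pi2,t)>0$, so $P\to+\infty$ at the ends of the arc and the minimum is attained at an interior point $\theta_t$, where $P_\theta=0$ and $P_{\theta\theta}\ge 0$. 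Inserting this into the evolution equation and using the crude Harnack bound $P\ge 1$ (that is, $H\ge\vert\cos\theta\vert$) gives, in the standard barrier sense, $\tfrac{d}{dt}\log m(t)\ge (n-1)/\ell(t)^2\ge 0$. Integrating from $s$ to $t$, discarding the nonnegative term $\log m(s)$, and letting $s\to-\infty$ (the integral converges near $-\infty$ since $\ell(\sigma)\ge -\sigma$) yields $\log m(t)\ge\int_{-\infty}^t (n-1)\ell(\sigma)^{-2}\,d\sigma$. Finally, $\ell(\sigma)\le -\sigma+o((-\sigma)^\varepsilon)$ gives $\ell(\sigma)^{-2}\ge(-\sigma)^{-2}-o((-\sigma)^{\varepsilon-3})$, whose integral from $-\infty$ to $t$ is $\tfrac{n-1}{-t}-o((-t)^{\varepsilon-2})$; exponentiating, $m(t)\ge 1+\tfrac{n-1}{-t}-o((-t)^{\varepsilon-2})$, and hence $H(\theta,t)=\vert\cos\theta\vert\,P(\theta,t)\ge\vert\cos\theta\vert\,m(t)$ is the claim.

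The only genuinely delicate points are bookkeeping ones: carrying out the (routine but slightly fiddly) trigonometric computation of the evolution equation for $P$ and checking that the singular coefficient $\tan\theta$ causes no trouble (it multiplies $P_\theta$, which vanishes at the interior minimum, and $P$ blows up at $\pm\tfrac\pi2$ anyway); making rigorous the differential inequality for the function $m(t)=\min_\theta P(\theta,t)$ (a standard consequence of $P$ being smooth with the minimum attained in the interior); and handling the limit $t\to-\infty$, where the crude bound $H\ge\vert\cos\theta\vert$ serves as the "initial condition" at $-\infty$. All of the substantive content is in Lemma \ref{lem:diameterbounds}; given it, the corollary is essentially a one-line maximum-principle argument.
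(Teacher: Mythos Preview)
Your argument is correct and is essentially the same as the paper's, just repackaged. The paper also studies $H/\cos\theta$, but instead of tracking $m(t)=\min_\theta P(\theta,t)$ directly it introduces the comparison function $w=f(t)\cos\theta$ with
\[
f(t)=\exp\!\left((n-1)\left[\frac{1}{-t}-\frac{C}{(2-\varepsilon)(-t)^{2-\varepsilon}}\right]\right),
\]
so that $f'/f\le (n-1)/\ell^2$, and then applies the maximum principle to conclude $\min_{S^1}(H/w)$ is non-decreasing in $t$; the initial condition at $-\infty$ comes, as in your argument, from Corollary~\ref{cor:asymptoticsedge}. In other words, the paper absorbs the reaction term $(n-1)/y^2\ge(n-1)/\ell^2$ into the choice of $f$, whereas you leave $f\equiv 1$ and integrate the resulting ODE inequality $\tfrac{d}{dt}\log m\ge(n-1)/\ell^2$ by hand; the two are equivalent. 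One small remark: you invoke the bound $P\ge 1$ when deriving the differential inequality for $\log m$, but it is not needed there (the inequality $P_t\ge(n-1)\ell^{-2}P$ already gives $(\log m)'\ge(n-1)\ell^{-2}$); it is only needed where you actually use it, namely to discard $\log m(s)\ge 0$ before sending $s\to-\infty$.
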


\begin{proof} By symmetry, it suffices to prove the claim for $\theta\in (-\pi/2,\pi/2)$ (note that for $\theta=\pm\pi/2$ the claim holds trivially). Consider the function $w:(-\frac{\pi}{2},\frac{\pi}{2})\times(-t,0)\to\R$ defined by $w(\theta,t):=f(t)\cos\theta$, where the function $f:(-\infty,0)\to\R_+$ will be determined momentarily. Observe that
\[
w_t=\kappa^2w_{\theta\theta}+|\mathrm{II}|^2w+\left(\frac{f'}{f}-(n-1)\lambda^2\right)w\,,
\]
where $\vert \mathrm{II}\vert^2:=\kappa^2+(n-1)\lambda^2$.
Recalling \eqref{eq:rotationalevolutionH}, we compute
\begin{align*}
\frac{w}{H}\left(\partial_t-\kappa^2\partial^2 _{\theta}\right)\frac{H}{w}={}&2\frac{w}{H}\left(\frac{H}{w}\right)_\theta\frac{w_\theta}{w}-(n-1)\lambda^2\tan\theta \frac{H_\theta}{H}\\
{}&-\left(\frac{f'}{f}-(n-1)\lambda^2\right)\,.
\end{align*}
Rewriting
\[
\frac{H_\theta}{H}=\frac{w}{H}\left(\frac{H}{w}\right)_\theta-\frac{w_\theta}{w}\quad\text{and}\quad
\frac{w_\theta}{w}=-\tan\theta
\]
we obtain
\begin{align}\label{eq:evolveH/w}
\frac{w}{H}\left(\partial_t-\kappa^2\partial^2 _{\theta}\right)\frac{H}{w}
+\tan\theta\frac{w}{H}\left(\frac{H}{w}\right)_\theta &\left((n-1)\lambda^2+2\right)\nonumber\\
={}&(n-1)\sec^2\theta\lambda^2-\frac{f'}{f}\nonumber\\
={}&\frac{(n-1)}{y^2}-\frac{f'}{f}\geq\frac{(n-1)}{\ell^2}-\frac{f'}{f}\,.
\end{align}
Now fix any $\varepsilon\in(0,1)$. Then, by Lemma \ref{lem:diameterbounds}, there is some $C<\infty$ such that
\[
\frac{1}{\ell(t)^2}\geq\frac{1}{(-t)^2}-\frac{C}{(-t)^{3-\varepsilon}}
\]
for all $t\in(-\infty,-1]$, say. Thus, if we set
\[
f(t):=\exp\left((n-1)\left[\frac{1}{-t}-\frac{C}{(2-\varepsilon)(-t)^{2-\varepsilon}}\right]\right)
\]
so that
\[
\frac{f'(t)}{f(t)}=(n-1)\left[\frac{1}{(-t)^2}-\frac{C}{(-t)^{3-\varepsilon}}\right]\leq \frac{n-1}{\ell(t)^2}
\]
for all $t<-1$ then, by the maximum principle,
\[
\min_{S^1\times\{t\}}\frac{H}{w}\geq\min_{S^1\times\{t_0\}}\frac{H}{w}\quad\text{for all}\quad -1>t>t_0\,.
\]
But, by Corollary \ref{cor:asymptoticsedge}, the right hand side approaches 1 as $t\to-\infty$. The claim follows by estimating $\exp(\zeta)\geq 1+\zeta$.
\end{proof}

Integrating the lower speed bound yields a displacement bound.
\begin{lemma}\label{lem:ellasymptotics}
The limit
\[
C:=\lim_{t\to-\infty}(\ell(t)+t-(n-1)\log(-t))
\]
exists (in the extended real line $\R\cup\{\infty\}$).
\end{lemma}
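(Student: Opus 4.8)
The plan is to differentiate the displacement $\ell(t)$ along the flow, insert the sharp lower speed bound of Corollary~\ref{cor:improvedHestimate}, and finish with a monotonicity argument based on integrability.

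First I would record how $\ell$ evolves. By $O(n)$-invariance, $\Gamma_t$ is reflection symmetric in the $\R e_1$-axis of $\E^2$, so the `top' point $\gamma(\pi,t)$ is a vertex with $\langle\gamma(\pi,t),e_2\rangle=\ell(t)$, outward unit normal $\nu(\pi)=e_2$ and $H_\theta(\pi,t)=0$. Hence \eqref{eq:tangentialrotationalMCF} gives
\[
\ell'(t)=\langle\partial_t\gamma(\pi,t),e_2\rangle=-H(\pi,t)\,.
\]
Setting $g(t):=\ell(t)+t-(n-1)\log(-t)$, a one-line computation then yields
\[
g'(t)=1+\frac{n-1}{-t}-H(\pi,t)\,.
\]

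Next I would apply Corollary~\ref{cor:improvedHestimate} at $\theta=\pi$ (where $\vert\cos\theta\vert=1$): for any fixed $\varepsilon\in(0,1)$,
\[
H(\pi,t)\ge 1+\frac{n-1}{-t}-o\!\left(\frac{1}{(-t)^{2-\varepsilon}}\right)\quad\text{as }t\to-\infty\,,
\]
so that $g'(t)\le o\!\left((-t)^{-(2-\varepsilon)}\right)$. The point of choosing $\varepsilon<1$ is that the exponent $2-\varepsilon$ is then strictly bigger than $1$, so this bound is integrable near $-\infty$. Concretely, with $\psi(t):=(-t)^{-(2-\varepsilon)}$ and $\Psi(t):=\int_{-\infty}^{t}\psi(s)\,ds=\tfrac{1}{1-\varepsilon}(-t)^{-(1-\varepsilon)}$, one has $\Psi'=\psi$ and $\Psi(t)\to0$ as $t\to-\infty$, and there is some $T_0>1$ with $g'(t)\le\psi(t)$ for all $t\le -T_0$. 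Then $g-\Psi$ is non-increasing on $(-\infty,-T_0]$ and therefore has a limit in $\R\cup\{+\infty\}$ as $t\to-\infty$; since $\Psi\to0$, so does $g$, which is exactly the assertion.

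I do not expect a genuine obstacle here: the analytic content is already contained in Corollary~\ref{cor:improvedHestimate} (and, behind it, in the diameter bound of Lemma~\ref{lem:diameterbounds} and the area estimates of Corollary~\ref{cor:areabounds}). The only points that need a little care are that reflection symmetry in the $\R e_1$-axis is what makes $\ell'(t)=-H(\pi,t)$ hold with no tangential contribution and with $\gamma(\pi,t)$ a bona fide vertex, and that the conclusion is merely $C\in\R\cup\{\infty\}$; ruling out $C=\infty$ is precisely the step that will later be forced to invoke the approximating solutions of Section~\ref{sec:approximatingsolutions}.
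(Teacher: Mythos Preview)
Your proof is correct and follows essentially the same route as the paper: differentiate $g(t)=\ell(t)+t-(n-1)\log(-t)$, apply Corollary~\ref{cor:improvedHestimate} at $\theta=\pi$ to bound $g'$ by an integrable term, and conclude by monotonicity of $g$ minus an antiderivative of that term (the paper's $f$ is your $\Psi$, up to a constant). One minor remark: you don't actually need $H_\theta(\pi,t)=0$ to get $\ell'(t)=-H(\pi,t)$, since $\tau(\pi)=-e_1\perp e_2$ already kills the tangential contribution in \eqref{eq:tangentialrotationalMCF}.
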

\begin{proof}
Given any $\varepsilon\in(0,1)$ set $f(t):=\frac{C}{1-\varepsilon}\frac{1}{(-t)^{1-\varepsilon}}$ for some $C\in\R$. By Corollary \ref{cor:improvedHestimate}, we can choose $C$ so that
\[
\frac{d}{dt}(\ell+t-(n-1)\log(-t)-f)\leq 0\,.
\]
The claim follows because $\lim_{t\to-\infty}f=1$.
\end{proof}

\begin{remark}\label{rem:betterasymptotics}
At this point, it should be possible to obtain higher order terms in the asymptotic expansion for $\ell$ by passing the better estimate $\ell(t)\sim -t+(n-1)\log(-t)$ back into the machine of Lemma \ref{lem:uHhk}. This may be useful for some applications; however, it will not be needed to obtain the uniqueness result of Theorem \ref{thm:uniqueness}.
\end{remark}

\section{Uniqueness}\label{sec:uniqueness}

We are now almost ready to prove uniqueness of the solution constructed in Theorem \ref{thm:limit}. But first we need to show that the limit
\begin{equation}\label{eq:pancakedisplacement}
C:=\lim_{t\to-\infty}(\ell(t)+t-(n-1)\log(-t))
\end{equation}
is finite on the particular solution that we have constructed. To achieve this, we will obtain area and displacement bounds for its approximating solutions by the methods developed in Section~\ref{sec:area estimates}. The precise knowledge of the initial data for the approximating solutions will then allow us to show that $C$ is finite.

Note first that the Harnack inequality does not yield the necessary lower speed bound $H_R\geq \vert{\cos\theta}\vert$ along the approximating solutions. By construction, the desired estimate does hold at the initial time $t=-T_R$, however, since, by \eqref{k}, $H_R(\theta,-T_R)\geq \k_R(\theta, -T_R)>\vert{\cos\theta}\vert$. It also holds trivially at the points $\{\pm\frac{\pi}{2}\}\times[-T_R,0)$. Setting $f\equiv 0$ in \eqref{eq:evolveH/w}, the maximum principle then yields
\[
H_R(\theta,t)\geq \vert{\cos\theta}\vert\quad\text{for all}\quad (\theta,t)\in S^1\times[-T_R,0)\,.
\]

This estimate, combined with Lemma \ref{CR-lem}, allows us to proceed similarly as in Section \ref{sec:area estimates} to obtain an analogue of Corollary \ref{cor:areabounds}. Namely,
\begin{equation}\label{eq:CRareas}
-t+(n-1)\log(-t)+C\ge \frac{A_R(t)}{2\pi}\ge-t+(n-1)\log(-t)-C
\end{equation}
for all $t\in[-T_R,-1)$, where $C<\infty$ is a constant which does not depend on $R$. In particular, bounding $A_R$ from above by the area of the region of the slab between the lines $y=\pm \ell_R$, we obtain the lower bound
\[
\ell_R(t)\geq-t+(n-1)\log(-t)-C\quad\text{for all}\quad t\in [-T_R,-1)\,.
\]
Since $C$ does not depend on $R$, this estimate passes to the limit solution. So it remains to bound $\ell_R$ from above (uniformly in $R$). Proceeding as in the proof of Lemma \ref{lem:diameterbounds}, we also obtain, for any $\varepsilon\in(0,1)$,
\[
\ell_R(t)\leq-t+C(-t)^\varepsilon\quad\text{for all}\quad t\in[-T_R,-1)
\,,\]
where $C$ depends on $\varepsilon$ but not on $R$. Recalling \eqref{k} and Corollary \ref{time}, we now find that, for $\theta\in(-\frac{\pi}{2},\frac{\pi}{2})$,
\[
\frac{H_R(\theta,-T_R)}{\cos\theta}\geq \frac{1}{\sqrt{1-\mathrm{e}^{-2R}}}+\frac{n-1}{R}\geq 1+\frac{n-1}{T_R}+o\left(\frac{1}{T_R^{2-\varepsilon}}\right)
\]
for any $\varepsilon\in(0,1)$. The maximum principle, applied as in Corollary \ref{cor:improvedHestimate}, then yields, for any $\varepsilon\in(0,1)$,
\[
H_R(\theta,t)\geq \vert{\cos\theta}\vert\left(1+\frac{n-1}{-t}-\frac{C}{(-t)^{2-\varepsilon}}\right)
\]
for all $(\theta,t)\in S^1\times [-T_R,-1)$, where $C<\infty$ depends on $\varepsilon$ but not on $R$. Thus, as in Lemma \ref{lem:ellasymptotics}, given any $\varepsilon\in(0,1)$ we may choose $C<\infty$ so that
\[
\frac{d}{dt}(\ell_R+t-(n-1)\log(-t)-f)\leq 0\quad \text{for all}\quad t\in[-T_R,-1)\,,
\]
where $f(t):=\frac{C}{1-\varepsilon}\frac{1}{(-t)^{1-\varepsilon}}$. Integrating yields
\begin{align*}
\ell_R(t)+t-(n-1)\log(-t)\leq{}& \ell_R(-T_R)-T_R-(n-1)\log(-T_R)\\
{}&+f(t)-f(-T_R)
\end{align*}
for all $ t\in[-T_R,0)$. But the initial area is given exactly by $A_R(-T_R)=2\pi R$ and the initial displacement $\ell_R(-T_R)$ is comparable to $R$ so that, recalling \eqref{eq:CRareas},
\[
\ell_R(-T_R)\le T_R+(n-1)\log T_R+C+\log 2\,.
\]
Since $\lim_{t\to-\infty}f(t)=1$ and $C$ is independent of $R$, taking $R\to\infty$ we conclude that the limit in \eqref{eq:pancakedisplacement} is finite.

We now have all the required estimates in place needed to prove uniqueness of the solution constructed in Theorem \ref{thm:existence}.

\begin{proof}[Proof of Theorem \ref{thm:uniqueness}]
Let $\{\Sigma_t\}_{t\in(-\infty,0)}$ be the solution constructed in Theorem \ref{thm:limit} and let $\{\Sigma'_t\}_{t\in(-\infty,0)}$ be another compact, convex $O(n)$-invariant ancient solution that lies in the slab $\Omega:=\{(x,y,z)\in \R^{n+1}:-\tfrac{\pi}{2}<x<\tfrac{\pi}{2}\}$ (and in no smaller slab). Note that, by Theorem \ref{thm:reflection symmetry}, the latter solution is also reflection symmetric about the hyperplane $\{ x_1=0\}$. Let $\gamma:S^1\times(-\infty,0)\to\E^2$ and $\gamma':S^1\times(-\infty,0)\to \E^2$ be turning angle parametrizations of the profile curves $\{\Gamma_t\}_{t\in(-\infty,0)}$ and $\{\Gamma'_t\}_{t\in(-\infty,0)}$ of the two solutions, respectively. Recall that the limits {\small
\begin{equation*}
C:=\lim_{t\to-\infty}(\ell(t), e_2\ra+t-\log(-t))\;\;\text{and}\;\; C':=\lim_{t\to-\infty}(\ell'(t), e_2\ra+t-\log(-t))
\end{equation*}
}exist and that $C$ is finite, where $\ell(t)$ and $\ell'(t)$ are the vertical displacements of $\Gamma_t$ and $\Gamma'_t$ respectively. We claim that $C=C'$. Indeed, if $C\ne C'$ then there must be some $t_0<0$ such that, for all $t<t_0$, either $\ell(t)<\ell'(t) \quad\text{or}\quad\ell'(t)<\ell(t)$. 
Recalling the notation of Section \ref{sec:reflection}, set $\wt \Sigma^{\a}_{t}:=(R_\a\cdot\Sigma_{t})\cap \Pi_0$ for any $\alpha\in(0,\frac{\pi}{2})$ and $\wt \Sigma'_{t}:=\Sigma'_{t}\cap \Pi_0$. Then $\partial\wt \Sigma^{\a}_{t}\cap\partial\wt \Sigma'_{t}=\emptyset$ for all $t<t_0$. Moreover, applying Corollary \ref{cor:asymptoticsedge} as in Claim \ref{claim:reflectionII}, we can find, for each $\alpha\in(0,\frac{\pi}{2})$, some $t_\alpha>\infty$ such that $\wt \Sigma^{\a}_{t}\cap \wt \Sigma'_{t}=\emptyset$ for all $t<t_\a$. It now follows from the strong maximum principle (cf. \cite[Theorem 2.2]{Chow97}) that $\wt \Sigma^{\a}_{t}\cap \wt \Sigma_{t}=\emptyset$ for all $t<t_0$. Taking $\alpha\to 0$, we see that either $\Omega_t\subset\Omega_t'$ or $\Omega'_t\subset\Omega_t$ for all $t<t_0$, where $\Omega_t$ and $\Omega'_t$ are the domains bounded by $\Sigma_t$ and $\Sigma'_t$, respectively. But, by the avoidance principle, $\Sigma_t$ and $\Sigma'_t$ must intersect for all $t<0$, so we conclude from the strong maximum principle that $\Sigma_t=\Sigma_t'$ for all $t<t_0$ and hence for all $t<0$.

So we can assume that $C=C'$. Consider, for any $\tau>0$, the solution $\Sigma^\tau_t:=\Sigma_{t+\tau}$. The vertical displacement $\ell^\tau$ of this solution's profile curve satisfies
\[
C^\tau:=\lim_{t\to-\infty}(\ell^\tau(t)+t-\log(-t))>C
\]
and hence $C^\tau>C'$. Arguing as above, we conclude that the regions $\Omega^\tau_t$ bounded by $\Sigma^\tau_t$ satisfy $\Omega'_t\subset\Omega^\tau_t$. Taking $\tau\to 0$, we conclude that the two solutions must be identical.
\end{proof}
 
\bibliographystyle{acm}
\bibliography{pancakes}

\end{document}